\newcommand{\cbundle}[1]{\overline{#1}}
\newcommand{\tQ}{\widetilde{Q}}
\newcommand{\tM}{\widetilde{M}}
\newcommand{\tX}{\widetilde{X}}
\newcommand{\tS}{\widetilde{S}}
\newcommand{\tU}{\widetilde{U}}
\newcommand{\tA}{\widetilde{A}}
\newcommand{\ts}{\tilde{s}}
\newcommand{\brh}{\bar{h}}
\newcommand{\ctQ}[1]{\underline{\tQ_{#1}}}
\newcommand{\bM}{\breve{M}}
\newcommand{\bX}{\breve{X}}
\newcommand{\bU}{\breve{U}}
\newcommand{\bh}{\breve{h}}
\newcommand{\bg}{\breve{g}}
\newtheorem{theorem}{Theorem}[section]
\newtheorem{lemma}[theorem]{Lemma}
\newtheorem{proposition}[theorem]{Proposition}
\newtheorem{corollary}[theorem]{Corollary}
\newtheorem{definition}[theorem]{Definition}
\newenvironment{remark}[1][Remark.]{\begin{trivlist}
\item[\hskip \labelsep {\bfseries #1}]}{\end{trivlist}}
\DeclareMathOperator{\grad}{grad}
\DeclareMathOperator{\id}{id}
\DeclareMathOperator{\spn}{span}
\newcommand\numberthis{\addtocounter{equation}{1}\tag{\theequation}}
\title[Cornered Asymptotically Hyperbolic Metrics]{Exponential Map and Normal Form for Cornered Asymptotically Hyperbolic Metrics}
\author{Stephen E. McKeown}
\address{Department of Mathematics\\University of Washington\\Seattle, Washington\\USA}
\curraddr{Department of Mathematics\\Princeton University\\Fine Hall\\Washington Road\\Princeton, New Jersey 08544, USA}
\email{smckeown@math.princeton.edu}
\thanks{Research partially supported by NSF RTG Grant DMS-0838212 and Grant DMS-1161283}
\begin{document}
\begin{abstract}
  This paper considers asymptotically hyperbolic manifolds with a finite boundary intersecting the usual
  infinite boundary -- \emph{cornered asymptotically hyperbolic manifolds} -- 
and proves a theorem of Cartan-Hadamard type near infinity for the normal exponential map on the 
finite boundary.
As a main application, a normal form for such manifolds at the corner is then constructed,
analogous to the normal form for usual asymptotically hyperbolic manifolds and suited to studying
geometry at the corner. The normal form is at the same time a submanifold normal form near the finite boundary
and an asymptotically hyperbolic normal form near the infinite boundary.
\end{abstract}

\maketitle

\section{Introduction}

A foundational fact of Riemannian geometry is that the exponential map at a point of a manifold
$(X,g)$ is a diffeomorphism on a neighborhood
of $0$; and similarly, the normal exponential map associated to a hypersurface $\iota: Q \hookrightarrow X$
is a diffeomorphism on a neighborhood
of the zero section. The Cartan-Hadamard theorem gives a spectacular global extension of the former of these 
in case $X$ is of nonpositive curvature and complete, to wit that the exponential is a covering map.
The global situation for the normal exponential map in a negatively curved space is more subtle due
to the importance of the geometry and topology of $Q$, and has developed more slowly. Thus \cite{her63} showed,
for example, that if $X$ is complete and of nonpositive curvature, 
$Q$ is closed, connected, and totally geodesic, and $\iota_*\pi_1(Q) = \pi_1(X)$, then $\exp$ is a diffeomorphism.
A version for level sets $Q$ of convex functions on complete nonpositively curved manifolds, among other related theorems,
can be found in the expansive \cite{bo69}, and in \cite{lang99}, it is proved (and stated to be generally known
but unpublished) that if $X$ is complete of nonpositive curvature and $Q$ totally geodesic, the normal exponential map over $Q$ is a diffeomorphism
onto its image. More recently, in \cite{bm08} it is proved that if $X^m$ is complete and $K^m \subset X^m$
is a compact, totally convex submanifold with boundary such that $X \setminus K$ has pinched negative curvature,
then the normal exponential map over $Q = \partial K$ is a diffeomorphism onto $\overline{X \setminus K}$.

Asymptotically hyperbolic (AH) manifolds $(X,M = \partial X,g)$
are complete but may have arbitrary curvature on a compact set, with curvature
approaching $-1$ toward the boundary at infinity. (Henceforth, $X$ will refer to a manifold with
boundary, and the metric of interest will live on the interior $\mathring{X}$).
Much of the interesting geometry and analysis on these spaces
occurs near the boundary, so results, such as that just mentioned in \cite{bm08}, that allow conclusions
about a collar neighborhood of the boundary can play a role analogous in this context to that of global results
such as Cartan-Hadamard in the negatively curved setting. The most important of these is the existence of
the geodesic normal form, first proved in \cite{gl91}: suppose $(M,[h])$ is the conformal infinity of $X$ and 
that $h \in [h]$. 
Then for $\varepsilon > 0$ small, there is a unique diffeomorphism $\psi$ from $[0,\varepsilon)_r \times M$
to a neighborhood of $M$ in $X$ such that $\psi^*g = \frac{dr^2 + g_r}{r^2}$ and $g_0 = h$. The normal form
has been a central tool in studying the duality between the boundary and interior geometry of asymptotically hyperbolic
Einstein manifolds, and has frequently been employed in studying the analysis and geometry of AH manifolds
generally. In this paper we prove a theorem of hypersurface Cartan-Hadamard type near the corner
for asymptotically hyperbolic manifolds that have a finite boundary in addition to the usual infinite boundary,
and then use this to construct a normal form at the corner.

We define a cornered space as a manifold $X$
with two boundary components $M$ and $Q$ that meet in a codimension-two corner
$S = Q \cap M \neq \emptyset$, and a cornered AH (CAH) space as a cornered space equipped on the interior
with a metric $g_+$ such that $g_+$ is smooth and nondegenerate at $Q \setminus S$ but
asymptotically hyperbolic at $M$. Such manifolds arose in the proof (\cite{bh14}) of local regularity for
AH Einstein manifolds, since a small neighborhood of a boundary point on a global asymptotically hyperbolic manifold
has such a structure. Such manifolds have also been studied in the physics literature in the context
of a proposed AdS/CFT-type correspondence for the case when the conformal field theory lives on a space
with boundary (BCFT). See \cite{ntu12} and the references therein. Like a usual AH metric,
CAH metrics have a conformal infinity $[h]$ on $M$.

The paradigm example of such a space is a portion of hyperbolic space bounded by an umbilic hypersurface.
Let $\mathbb{H}^{n + 1} = \left\{ x^0 > 0 \right\}$ be the upper half-space model
of hyperbolic space, with $g_+$ the hyperbolic metric $g_+ = \frac{(dx^0)^2 + \dots + (dx^n)^2}{(x^0)^2}$.
Let $\alpha \in \mathbb{R}$, and $X = \{(x^0,\dots,x^n) \in \overline{\mathbb{H}^{n + 1}}: x^n \geq \alpha x^0\}$, with
$Q = \left\{ x^n = \alpha x^0\right\}$ and $M = \left\{ x^0 = 0 \text{ and } x^n \geq 0 \right\}$. The conformal infinity
$[h]$ is that of the Euclidean metric on $M$. The geodesics
normal to $Q$ are precisely the intersections with $X$ of the circles $(x^0)^2 + (x^n)^2 = a^2$
(where $a \in \mathbb{R}^{>0}$), $x^1 = x^2 = \dots = x^{n - 1} = const$.
The corner normal form in the hyperbolic case is obtained by introducing polar coordinates $(\theta, \rho)$, in which
$Q, M$, and $S$ are all given by constant coordinates: 
\begin{equation*}
  x^0 = \rho\sin\theta, \quad x^n = \rho\cos\theta.
\end{equation*}
In these coordinates, the metric takes the form
\begin{equation}
  \label{hyppolg}
  g_+ = \csc^2(\theta)\left[ d\theta^2 + \frac{d\rho^2 + (dx^1)^2 + \dots + (dx^{n - 1})^2}{\rho^2}\right].
\end{equation}

The appearance of polar coordinates motivates us in the general case to follow
the usual expedient of blowing up $X$ along $S$, obtaining a blown up space $(\tX,\tM,\tQ,\tS)$, with a blow-down
map $b:\tX \to X$. This has the properties that $b|_{\tX \setminus \tS}:\tX \setminus \tS \to X \setminus S$ is a diffeomorphism,
as are $b|_{\tM}:\tM\to M $ and $b|_{\tQ}:\tQ\to Q$,
while $b|_{\tS}:\tS \to S$ is a fibration with fibers diffeomorphic to the closed unit interval. We will denote
such a diffeomorphism equivalence by $\tX \setminus \tS \approx X \setminus S$ (for example).

In applications of our normal form theorems, we will need to consider metrics smooth
on the blowup but not on the base. 
Thus, we give results for a somewhat wider class of metrics than those of the form 
$b^*g_+$ for $g_+$ a smooth cornered AH metric on $X$.
In Definition \ref{admis} we define admissible metrics, which differ from such a pullback by a perturbation that is smooth on
$\tX$ and vanishes in an appropriate sense at $\tM$ and $\tS$. Thus, such
a metric may be written $g = b^*g_+ + \mathcal{L}$ for appropriate $\mathcal{L}$.
Given any admissible metric $g$, there is a well-defined angle function $\Theta$ on $\tS$, which serves
as a fiber coordinate.

The normal exponential map $\exp$ of $Q \setminus S \approx \tQ \setminus \tS$ is defined on the inward-pointing
normal ray bundle $N_+(\tQ \setminus \tS)$. With
$\nu$ the inward-pointing unit normal field on $\tQ \setminus \tS$, this bundle has a natural decomposition
$N_+(\tQ\setminus\tS) \approx [0,\infty)_t \times (\tQ \setminus \tS)$ given by the prescription
$(t,q) \mapsto t\nu_q$. We compactify $N_+(\tQ \setminus \tS)$ by adding faces corresponding
to $t = \infty$ and to $[0,\infty] \times (\tQ \cap \tS)$, and we denote the compactification by 
$\cbundle{N_+(\tQ \setminus \tS)}$, a manifold with corners of codimension two.

Our first main result is as follows.

\begin{theorem}
  \label{mainthm}
  Let $(\tX,\tM,\tQ,\tS)$ be the blowup of a cornered space, and $g$ an admissible metric on $\tX$.
  There is a neighborhood $V$ of $\tQ \cap \tS$ in $\tQ$ and a neighborhood $\tU$ of $\tS$ in $\tX$ such that $\exp$
  extends to a diffeomorphism $\exp:\cbundle{N_+(V\setminus\tS)} \to \tU$.
\end{theorem}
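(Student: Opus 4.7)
The plan is to reduce the problem to analysis of an ODE with smooth extension to $\tM$, and then apply an inverse function theorem to a suitable rescaling of $\exp$ on the compactified bundle. Guided by the model computation \eqref{hyppolg}, the strategy has two phases: construct explicit coordinates near the corner in which the geodesic equation for an admissible $g$ extends smoothly to $\tM$ when parametrized by the angle $\Theta$ rather than by arc length; then show that the corresponding ``$\Theta$-exponential map'' is a diffeomorphism onto a neighborhood of $\tS$, and identify this map with the extension of $\exp$ to $\cbundle{N_+(V\setminus\tS)}$.

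First, I would fix a point $p \in \tS \cap \tQ$ and choose coordinates $(\theta,\rho,y^1,\dots,y^{n-1})$ on a neighborhood in $\tX$ such that $\tM = \{\theta = 0\}$, $\tQ = \{\theta = \pi/2\}$, $\tS = \{\theta = 0 = \rho\}$, and such that $b^*g_+$ has the hyperbolic form \eqref{hyppolg} to leading order at $\tS$. By the definition of admissible, $g$ then differs from $\csc^2\theta \bigl[d\theta^2 + (d\rho^2 + (dy)^2)/\rho^2\bigr]$ by a tensor that is smooth on $\tX$ and vanishes to the appropriate order at $\tM$ and $\tS$. In particular, the conformal rescaling $\bh = \sin^2(\theta)\, g$ extends smoothly and non-degenerately across $\tM$ near $p$, and $\tQ$ is spacelike with respect to $\bh$ with inward $\bh$-normal pointing in the direction of decreasing $\theta$.

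Next, consider the $g$-geodesic equation along a normal geodesic $\gamma(t) = (\theta(t),\rho(t),y(t))$. Reparametrizing by $\theta$ (valid since in the model $\theta$ is monotone along normal geodesics, and this persists for small perturbations of initial data near $\tQ \cap \tS$), the ODE for $z = (\rho,y)$ as a function of $\theta$ becomes, after absorbing the singular factors from $\csc^2\theta$, a system of the form
\begin{equation*}
  \frac{d^2 z}{d\theta^2} = F\!\left(\theta, z, \frac{dz}{d\theta}\right),
\end{equation*}
where $F$ is smooth up to $\theta = 0$. This is the central technical point: the singular conformal factor is precisely compensated by the change of parameter, so the perturbation terms encoded by $\mathcal{L}$ produce at worst bounded contributions. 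Standard ODE theory then yields smooth dependence of $(z,dz/d\theta)$ on $\theta \in [0,\pi/2]$ and on initial data $(z(\pi/2), z'(\pi/2))$ at $\tQ$.

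Define the $\Theta$-exponential map $E$ sending a normal initial condition at $q \in \tQ$ near $p$ to the point at angle $\theta = 0$ (or to any fixed interior $\theta$); by the previous step, $E$ is smooth up to the face $\{\theta = 0\}$. In the model case $E$ is clearly a diffeomorphism, and the differential at points of $\tQ \cap \tS$ depends continuously on the perturbation; hence for admissible $g$, $dE$ is invertible on a neighborhood of $\tQ \cap \tS$ in $\tQ$, giving a local diffeomorphism by the inverse function theorem. To finish, I would identify this $\Theta$-parametrized flow with the compactified arc-length parametrization: the relation $dt = \csc\theta \, d\theta$ (to leading order) shows that $\theta \to 0$ corresponds to $t \to \infty$, so $\{\theta = 0\}$ is exactly the $t = \infty$ face of $\cbundle{N_+(V\setminus\tS)}$, and the fibered face over $\tQ \cap \tS$ is parametrized by $\theta \in [0,\pi/2]$ at $\rho = 0$. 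Covering $\tQ \cap \tS$ by finitely many such coordinate neighborhoods and gluing (using uniqueness of geodesics in the interior and uniqueness of the smooth ODE extension at $\tM$) produces the desired neighborhoods $V$ and $\tU$ and the global diffeomorphism.

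The main obstacle is justifying that the reparametrized geodesic equation really does extend smoothly to $\theta = 0$ for an arbitrary admissible metric, rather than merely for the model: one must verify that the defining vanishing order of the perturbation tensor $\mathcal{L}$ at $\tM$ and $\tS$ in Definition \ref{admis} is exactly what is needed to cancel the singular factors in the Christoffel symbols produced by the $\csc^2\theta$ conformal weight.
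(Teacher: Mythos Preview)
Your outline captures the smooth-extension step fairly well and is close in spirit to the paper's Section~\ref{exponential}: the paper too reparametrizes the geodesic flow by $\theta$ and verifies that the resulting system is smooth up to $\theta = 0$ (working on the cotangent bundle with rescaled momenta $\bar{\xi}_0 = \sin(\theta)\xi_0$, $\bar{\xi}_\mu = \rho\xi_\mu$, and using an iterative bootstrap to get $\bar{\xi}_0^2 = 1 + O(\sin\theta)$ before the $\theta$-reparametrization is legitimate). Your perturbative justification that $\theta$ is monotone, however, is not uniform in $t$: small perturbations of initial data near $\tQ\cap\tS$ do not automatically control behavior for $t\to\infty$. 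The paper handles this globally via the Hessian identity $\nabla_g^2(\cot\theta) = \cot(\theta)\,g + O_g(\rho)$ and an ODE comparison argument (Proposition~\ref{geodprop}), which simultaneously shows that normal geodesics exist for all $t\geq 0$ and never return to $\tQ$---a point you do not address and which is not automatic, since the metric is incomplete at $\tQ$.

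The more serious gap is injectivity. The inverse function theorem, applied chart by chart and then ``glued,'' only yields that $\exp$ is a local diffeomorphism on $\cbundle{N_+(V\setminus\tS)}$; it says nothing about two geodesics from distinct basepoints $q_1,q_2\in V$ meeting at a common interior point. Uniqueness of geodesics and of the ODE extension at $\tM$ are irrelevant here: those give uniqueness of the \emph{lift} through a given point of the domain, not uniqueness of the \emph{preimage} of a point in the target. The paper devotes all of Section~\ref{injectivity} to this: it proves a path- and homotopy-lifting property for $\exp$ (Proposition~\ref{lift}), uses it together with simple connectivity of suitable tubular regions $\tA(B,\kappa)$ around fibers of $\tS$ to rule out double points, and then assembles a global injective neighborhood via a finite-cover argument controlled by the quantitative estimates of Proposition~\ref{geodprop}. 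Without an argument of this kind, your proposal establishes only that $\exp$ is a local diffeomorphism onto its image, not a diffeomorphism onto a neighborhood $\tU$ of $\tS$.
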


One of the consequences of this theorem is that near $S$ there is a distinguished
representative of the conformal infinity $[h]$ on $M \setminus S$, which itself is conformally compact on $(M,S)$.
To see this, simply note that $e^{-t}$ is a defining function for $\tM$ via the diffeomorphism in the theorem,
so that $e^{-2t}g|_{T\tM}$ is a well-defined element of 
$(b|_{\tM})^*[h]$ on $\tM \approx M$ depending only on the geometry of $(\tX,g)$. We
call this the induced metric on $M$.

Theorem \ref{mainthm} allows us to prove two normal form theorems. The first applies generally, while the second requires
that $Q$ and $M$ make a constant angle with respect to the compactified $g_+$, but gives a normal form with better
properties.

\begin{theorem}
  \label{normform}
  Let $(\tX,\tM,\tQ,\tS)$ be the blowup of a cornered space, and $g$ an admissible metric on $\tX$.
  For sufficiently small neighborhoods
  $V$ of $\tQ \cap \tS$ in $\tQ$, there exist a neighborhood
  $\tU$ of $\tS$ in $\tX$ and a unique diffeomorphism $\psi:[0,1]_{u} \times V
  \to \tU$ such that $\psi|_{ \left\{ 1 \right\} \times V} = \id_V$ and
  \begin{equation}
    \label{normformeq}
    \psi^*g = \frac{du^2 + h_u}{u^2},
  \end{equation}
  with $h_u$ ($0 \leq u \leq 1)$ 
  a smooth one-parameter family of smooth conformally compact metrics on $(V,\tQ \cap \tS)$, and such that
  $\tM = \psi(\left\{ u = 0 \right\})$ and $\tQ = \psi(\left\{ u = 1 \right\})$.
\end{theorem}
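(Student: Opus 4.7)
The approach is to invoke Theorem \ref{mainthm} and reparameterize the geodesic distance coordinate by $u = e^{-t}$. Shrinking $V$ if necessary, Theorem \ref{mainthm} gives a diffeomorphism $\exp : \cbundle{N_+(V\setminus\tS)} \to \tU$. Using the natural decomposition $N_+(\tQ\setminus\tS) \approx [0,\infty)_t \times (\tQ\setminus\tS)$ and the fact that $u = e^{-t}$ is a smooth boundary-defining function for the face $\{t=\infty\}$ of the compactification, I would define
\[
  \psi(u,q) := \exp_q\bigl((-\log u)\,\nu_q\bigr), \qquad u \in (0,1],
\]
extended through $u = 0$ via the compactification. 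This produces a diffeomorphism $\psi : [0,1]_u \times V \to \tU$ with $\psi|_{\{1\}\times V} = \id_V$, $\psi(\{0\}\times V) \subset \tM$, and $\psi(\{1\}\times V) = V \subset \tQ$.

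\textbf{Reduction to the normal form.} By the Gauss lemma applied to the normal exponential map from $\tQ$, in the $(t,q)$ coordinates we have the orthogonal decomposition $\exp^* g = dt^2 + g_t$, where $g_t$ is a family of horizontal symmetric $2$-tensors. Under $u = e^{-t}$, we have $dt^2 = du^2/u^2$, so setting $h_u := u^2\, g_{-\log u}$ gives
\[
  \psi^* g = \frac{du^2 + h_u}{u^2},
\]
which is the desired form. The identifications $\tM = \psi(\{u=0\})$ and $\tQ = \psi(\{u=1\})$ are immediate from the compactification structure.

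\textbf{Main obstacle: smoothness and conformal compactness of $h_u$.} The technical heart of the argument is to verify that $h_u$ extends smoothly in $u\in[0,1]$ and that each $h_u$ is conformally compact on $(V, \tQ\cap\tS)$. Smoothness in $u$ down to $u=0$ should follow because Theorem \ref{mainthm} makes $u = e^{-t}$, pushed forward via $\psi$, a smooth defining function for $\tM$ on $\tU$; combined with the admissibility (hence smoothness on $\tX$) of $g$, this forces $h_u = u^2 g|_{T\{u=\mathrm{const}\}}$ to be smooth in $(u,q)$ for $u \in [0,1]$. That $h_0$ agrees with the induced conformal infinity metric on $\tM$ (and is thus nondegenerate off $\tQ\cap\tS$) should follow from the asymptotic hyperbolicity of $g$ at $\tM$: along any normal geodesic to $\tQ$, the distance $t$ differs from the logarithm of a standard geodesic AH defining function by a quantity bounded as $t\to\infty$, so $u = e^{-t}$ is equivalent to such a defining function to leading order. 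The conformal compactness of $h_u$ at $\tQ\cap\tS$ for each fixed $u$ reflects the AH-type behavior of $g$ on equidistant hypersurfaces from $\tQ$, whose asymptotics at $\tS$ are controlled by the admissibility hypothesis. I expect this verification of the correct asymptotic structure of $h_u$ at $u = 0$ to be the main obstacle, requiring a careful analysis of $g_t$ via the geodesic equations together with the admissibility conditions at $\tM$ and $\tS$.

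\textbf{Uniqueness.} If $\psi'$ is any diffeomorphism satisfying the conditions of the theorem, then the stated form of $\psi'^* g$ forces $\psi'_*\partial_u$ to have $g$-length $1/u$ and to be $g$-orthogonal to the level sets of $u$. Reparameterizing by $s = -\log u$, the curve $s\mapsto\psi'(e^{-s},q)$ is then a unit-speed geodesic starting at $q\in\tQ$ with initial tangent normal to $\tQ$, so uniqueness of geodesics gives $\psi' = \psi$.
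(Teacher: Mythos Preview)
Your proposal is correct and follows the same approach as the paper: invoke Theorem \ref{mainthm}, set $u = e^{-t} = 1-\tau$, use the Gauss lemma to write $\psi^*g = u^{-2}(du^2 + h_u)$ with $h_u = u^2 g_{-\log u}$, and deduce uniqueness from uniqueness of normal geodesics. The one place you overestimate the difficulty is the ``main obstacle'': the paper does \emph{not} analyze the geodesic equations to verify that $h_u$ is conformally compact on $(V,\tQ\cap\tS)$. Instead it works directly in the polar $g$-identification, observing that $u/\sin\theta$ is smooth and nonvanishing on $\tU$ (both $u$ and $\sin\theta$ are defining functions for $\tM$, by Theorem \ref{mainthm} and by construction), and that on each level set $\{u=c\}$ one has $d\theta = a\,d\rho + b_s\,dx^s$ for smooth $a,b_s$. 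Plugging this into the polar form (\ref{polform}) shows immediately that $u^2 g|_{T\{u=c\}}$ is a smooth conformally compact metric on $V$, smoothly in $c$ down to $c=0$.
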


It will be useful in some applications to fix $\tM$ instead of $\tQ$:
\begin{corollary}
  \label{normformcor}
  Let $(\tX,\tM,\tQ,\tS)$ and $g$ be as in Theorem \ref{normform}.
  For sufficiently small neighborhoods $W$ of $\tM \cap \tS$ in $\tM$,
  there exist a neighborhood $\tU$ of $\tS$ in $\tX$ and a unique diffeomorphism $\zeta:[0,1]_u \times W \to \tU$ such that
  $\zeta|_{ \left\{ 0 \right\} \times W} = \id_W$ and so that 
  \begin{equation}
    \zeta^*g = \frac{du^2 + h_u}{u^2},
  \end{equation}
  with $h_u$ ($0 \leq u \leq 1$) a smooth one-parameter family of smooth conformally compact metrics on $(W,\tM \cap \tS)$,
  and such that $\tM = \zeta(\left\{ u = 0 \right\})$ and
  $\tQ = \zeta(\left\{ u = 1 \right\})$.
\end{corollary}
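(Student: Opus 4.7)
The plan is to derive the corollary from Theorem \ref{normform} by relabeling points in the normal form using their $\tM$-projection rather than their $\tQ$-projection. The diffeomorphism $\psi$ from Theorem \ref{normform} is the identity on $\{1\}\times V \subset \tQ$ but sends $\{0\}\times V$ to some neighborhood of $\tM\cap\tS$ in $\tM$ by a nontrivial diffeomorphism; composing in the angular factor with this diffeomorphism will exchange the roles of the two boundary faces.

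\textbf{Construction.} Apply Theorem \ref{normform} to obtain $\psi:[0,1]_u\times V_0\to \tU_0$ with $\psi|_{\{1\}\times V_0}=\id_{V_0}$ and $\psi^*g=(du^2+h_u)/u^2$, for some neighborhood $V_0$ of $\tQ\cap\tS$ in $\tQ$. Since $\psi$ is a diffeomorphism and $\tM=\psi(\{u=0\})$, the restriction $\psi_0:=\psi|_{\{0\}\times V_0}:V_0\to W_0\subset \tM$ is a diffeomorphism onto a neighborhood of $\tM\cap\tS$, mapping $V_0\cap\tS$ diffeomorphically onto $W_0\cap\tS$. Given $W\subset W_0$, set $V:=\psi_0^{-1}(W)\subset V_0$ and define
\begin{equation*}
  \zeta:[0,1]\times W\to \tU:=\psi([0,1]\times V),\qquad \zeta(u,w):=\psi\bigl(u,\psi_0^{-1}(w)\bigr).
\end{equation*}
Then $\zeta(0,w)=\psi_0(\psi_0^{-1}(w))=w$, so $\zeta|_{\{0\}\times W}=\id_W$, and $\zeta(1,w)=\psi_0^{-1}(w)\in \tQ$, so $\tQ=\zeta(\{u=1\})$. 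Writing $\zeta=\psi\circ F$ with $F(u,w)=(u,\psi_0^{-1}(w))$, we get
\begin{equation*}
  \zeta^*g = F^*\left(\frac{du^2+h_u}{u^2}\right)=\frac{du^2+(\psi_0^{-1})^*h_u}{u^2},
\end{equation*}
so relabeling $(\psi_0^{-1})^*h_u$ as $h_u$ on $W$ gives the desired form. Since $\psi_0^{-1}$ is a diffeomorphism of pairs $(W,\tM\cap\tS)\to (V,\tQ\cap\tS)$, the conformal compactness of each $h_u$ transfers.

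\textbf{Uniqueness.} This is the delicate part and the main obstacle. Given any $\zeta$ satisfying the corollary, set $\sigma:=\zeta|_{\{1\}\times W}:W\to \tQ$, a diffeomorphism onto some neighborhood $V':=\sigma(W)$ of $\tQ\cap\tS$. Define $\psi'(u,v):=\zeta(u,\sigma^{-1}(v))$ on $[0,1]\times V'$; a direct check shows $\psi'$ satisfies all hypotheses of Theorem \ref{normform} for the neighborhood $V'$ (in particular $\psi'(1,v)=v$, and the metric takes the required form). The uniqueness clause of Theorem \ref{normform}, applied to $V'$, forces $\psi'$ to coincide with the (restriction of the) $\psi$ produced there; thus $\zeta(u,w)=\psi(u,\sigma(w))$, and setting $u=0$ gives $w=\psi_0(\sigma(w))$, i.e.\ $\sigma=\psi_0^{-1}|_W$. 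Hence $\zeta$ agrees with the construction above.

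\textbf{Anticipated difficulties.} The main technical point is ensuring compatibility of the various neighborhoods $V_0,V',V\subset\tQ$ and $W_0,W\subset\tM$ produced in the course of the argument, and that Theorem \ref{normform} is indeed applicable to $V'$ (i.e.\ that $V'$ is ``sufficiently small''); this is handled by shrinking $W$ at the outset so that all relevant neighborhoods lie within those for which Theorem \ref{normform} provides a normal form. The metric computation and boundary identifications are mechanical once the relabeling is set up.
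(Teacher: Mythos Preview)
Your proposal is correct and follows essentially the same approach as the paper: define $\zeta(u,w)=\psi(u,\psi_0^{-1}(w))$ where $\psi_0=\psi|_{\{0\}\times V}$, and obtain uniqueness by reversing the construction to recover $\psi$ and invoking the uniqueness clause of Theorem~\ref{normform}. Your treatment of uniqueness and of the neighborhood bookkeeping is in fact more careful than the paper's, which dispatches both in a single sentence.
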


Notice that 
(\ref{normformeq}) is in normal form in the usual asymptotically hyperbolic sense relative to $\tM$, 
while under the subsitution $t = -\log u$,
it is in the usual geodesic normal form relative to $\tQ$. In particular, $t$ is the distance to $\tQ$.

The metrics $h_u$ for fixed $u$ are generally not asymptotically hyperbolic. 
The asymptotic curvature depends on both $u$ and the angle between $Q$ and $M$ at the point
of $S$ approached. However, when the two boundary components make constant angle $\theta_0$, 
we can make the change of variable
$u = \frac{\csc\theta - \cot\theta}{\csc\theta_0 - \cot\theta_0}$ to obtain a normal form with AH slice metrics. 

\begin{theorem}
  \label{normformconst}
  Let $(\tX,\tM,\tQ,\tS)$ be the blowup of the cornered space $(X,M,Q)$, and $g = b^*g_+ + \mathcal{L}$ an admissible metric on $\tX$.
  Suppose that there is some $\theta_0 \in (0,\pi)$ such that, for any defining function $\varphi$ for $M$, the boundary components
  $M$ and $Q$ make constant angle $\theta_0$ with respect to the compactified metric $\varphi^2g_+$.

  For sufficiently small neighborhoods $V$ of $\tQ \cap \tS$ in $\tQ$, there is a neighborhood $\tU$ of $\tS$ in
  $\tX$ and a unique diffeomorphism $\psi:[0,\theta_0]_{\theta} \times V \to \tU$ such that $\psi|_{ \left\{ \theta_0 \right\} \times V} = \id_V$ and
    \begin{equation*}
      \psi^*g = \frac{d\theta^2 + h_{\theta}}{\sin^2\theta},
    \end{equation*}
    where $h_{\theta}$ ($0 \leq \theta \leq \theta_0$) 
    is a smooth one-parameter family of smooth asymptotically hyperbolic metrics on
    $(V,\tQ \cap \tM)$, and such that $\tM = \psi(\{\theta = 0\})$ and $\tQ = \psi(\{\theta = \theta_0\})$. Moreover, 
    $\theta|_{[0,\theta_0]\times(\tQ \cap \tS)} = \psi^*\Theta$.
    Also $\partial_{\theta}\brh_{\theta}|_{\rho = 0} = 0$, where $\brh_{\theta} = \rho^2h_{\theta}$ and $\rho$ is 
    any defining function for $\tQ \cap \tS$ in $V$.
\end{theorem}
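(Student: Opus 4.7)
My plan is to deduce Theorem \ref{normformconst} from Theorem \ref{normform} via the explicit reparameterization suggested in the preceding discussion, and then use the constant-angle hypothesis to upgrade the conformally compact slice metrics to AH ones. First I apply Theorem \ref{normform} to obtain a diffeomorphism $\psi_0:[0,1]_u \times V \to \tU$ with $\psi_0^*g = (du^2 + h_u)/u^2$, $\psi_0|_{\{1\}\times V} = \id_V$, and the expected boundary identifications. A direct calculation gives $du/u = \csc\theta\,d\theta$ under the substitution $u = (\csc\theta - \cot\theta)/(\csc\theta_0 - \cot\theta_0)$, so setting $\psi(\theta,p) = \psi_0(u(\theta),p)$ yields $\psi^*g = (d\theta^2 + h_\theta)/\sin^2\theta$ with $h_\theta := (\sin^2\theta/u^2) h_u$. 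Uniqueness and the boundary identifications $\psi|_{\{\theta_0\}\times V} = \id_V$, $\tM = \psi(\{\theta = 0\})$, and $\tQ = \psi(\{\theta = \theta_0\})$ are inherited from $\psi_0$.

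The substantive step is to show that $h_\theta$ is a family of AH metrics on $(V,\tQ\cap\tS)$. Let $\rho$ be a defining function for $\tQ \cap \tS$ in $V$ and write $\brh_u = \rho^2 h_u$; then $\brh_\theta = \rho^2 h_\theta = (\sin^2\theta/u^2)\brh_u$ is automatically smooth and nondegenerate up to $\rho = 0$. The AH condition for $h_\theta$ reduces to $|d\rho|^2_{\brh_\theta}|_{\rho=0} = 1$, equivalently $|d\rho|^2_{\brh_u}|_{\rho=0} = u^2/\sin^2\theta$. The $u$-curves of the normal form are $g$-geodesics normal to $\tQ$; along such a geodesic approaching the corner from a point of $S$, the admissibility condition $g = b^*g_+ + \mathcal{L}$ together with the constant-angle hypothesis should force the compactified metric at $\tS$ to agree, to leading order, with the Euclidean polar model of the paradigm example (\ref{hyppolg}) at angle $\theta_0$. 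Pulling this comparison through Theorem \ref{normform} yields the required identity $|d\rho|^2_{\brh_u}|_{\rho=0} = u^2/\sin^2\theta$, which is the principal technical step.

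The identification $\theta|_{[0,\theta_0]\times(\tQ\cap\tS)} = \psi^*\Theta$ is then immediate, since $\Theta$ is by construction the fiber coordinate on $\tS \to S$ and $\theta$ has been matched to that coordinate by the change of variable. Finally, $\partial_\theta \brh_\theta|_{\rho=0} = 0$ should follow from a \emph{post hoc} observation: for fixed $p \in \tQ \cap \tS$, the curve $\theta \mapsto \psi(\theta,p)$ lies entirely in the fiber of $\tS \to S$ over $b(p)$, so the induced metric $\brh_\theta|_{\rho=0}$ at $p$ should descend through $b$ to a fixed metric on $S$, in particular independent of $\theta$. The principal obstacle of the whole argument is the asymptotic analysis at $\tS$: one must show that the admissible perturbation $\mathcal{L}$ contributes only at higher order to $|d\rho|^2_{\brh_u}|_{\rho=0}$ and to the leading-order induced metric on $\tQ\cap\tS$, which requires a careful unpacking of Definition \ref{admis} of admissibility together with the constant-angle hypothesis at the corner.
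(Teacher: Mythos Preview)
Your overall architecture---apply Theorem \ref{normform}, reparameterize via $u = (\csc\theta-\cot\theta)/(\csc\theta_0-\cot\theta_0)$, and check that the slice metrics become AH---is exactly the paper's approach. The gap is that you have not identified the mechanism that makes the reparameterization geometric rather than merely formal, and this leads you to misjudge which steps are hard.

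You call the identification $\theta|_{[0,\theta_0]\times(\tQ\cap\tS)} = \psi^*\Theta$ ``immediate,'' but it is not: the new variable $\theta$ is defined from $u=e^{-t}$, with $t$ the $g$-distance to $\tQ$, by an explicit substitution chosen so that $du/u=d\theta/\sin\theta$. Nothing in that substitution refers to the angle function $\Theta$ on $\tS$, so there is no reason a priori for the two to agree. The paper obtains this from Proposition \ref{extendprop}, specifically formula (\ref{ssoln}), which solves the geodesic flow explicitly on $\tS$ and shows that $v(\Theta)$ evolves linearly in $\tau=1-u$; under the constant-angle hypothesis $\Theta|_{\tQ\cap\tS}\equiv\theta_0$, this gives exactly $v(\Theta)=v(\theta_0)\,u$, i.e.\ $\Theta = \theta$ on $\tS$. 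This same fact is what drives the AH property: once you know the normal-form $\theta$ equals the polar-coordinate $\theta$ modulo $O(\rho)$, you get $\sin\theta_{\text{new}}/\sin\theta_{\text{polar}} = 1+O(\rho)$, and then restricting $\sin^2\theta\cdot g$ to the level sets using the polar form (\ref{polform}) shows asymptotic hyperbolicity directly. Your formulation via $|d\rho|^2_{\brh_u}|_{\rho=0}=u^2/\sin^2\theta$ is equivalent, but the ``comparison with the paradigm example'' you invoke is precisely the content of (\ref{ssoln}), and you have not named it.

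For $\partial_\theta\brh_\theta|_{\rho=0}=0$, your heuristic---that the exponential orbits through $\tQ\cap\tS$ stay in fibers of $b|_{\tS}$, so the metric descends to $S$---is in the right spirit but not sufficient. One must control how tangent vectors to $V$ push forward under $d\psi$, not just where basepoints go; the paper does this via the quadratic estimate (\ref{quadest}), which shows that the coordinate vector fields $\partial/\partial y^\mu$ (constant along exponential orbits) differ from $\partial/\partial x^\mu$ only by a multiple of $\partial_\theta\in\ker\bar{g}|_{\tS}$ plus $O(\rho)$, whence $\bar{g}(\partial_{y^\mu},\partial_{y^\nu})|_{\rho=0}$ is independent of $\theta$. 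Both (\ref{ssoln}) and (\ref{quadest}) come from the analysis of the geodesic flow in Proposition \ref{extendprop}; without them your plan does not close.
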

Note that the normal form here given, and $\theta_0$, depend only on $g$, and not on the decomposition
$g = b^*g_+ + \mathcal{L}$.

Once again, it can be helpful to fix $\tM$ instead of $\tQ$.
\begin{corollary}
  \label{normformconstcor}
  Let $(\tX,\tM,\tQ,\tS)$, $(X,M,Q)$, and $g$ be as in Theorem \ref{normformconst}, with again a constant angle
  $\theta_0$ between $Q$ and $M$.

  For sufficiently small neighborhoods $W$ of $\tM \cap \tS$ in $\tM$, there is a neighborhood $\tU$ of $\tS$ in
  $\tX$ and a unique diffeomorphism $\zeta:[0,\theta_0]_{\theta} \times W \to \tU$ such that $\zeta|_{ \left\{ 0 \right\} \times W} = \id_W$ and
    \begin{equation*}
      \zeta^*g = \frac{d\theta^2 + h_{\theta}}{\sin^2\theta},
    \end{equation*}
    where $h_{\theta}$ ($0 \leq \theta \leq \theta_0$) 
    is a smooth one-parameter family of smooth asymptotically hyperbolic metrics on
    $(W, \tM \cap \tS)$, and such that $\tM = \zeta(\{\theta = 0\})$ and $\tQ = \zeta(\{\theta = \theta_0\})$. Moreover, $\theta|_{[0,\theta_0] \times (\tM \cap \tS)} 
    = \zeta^*\Theta$, and $\partial_{\theta}\brh_{\theta}|_{\rho = 0} = 0$, where $\brh_{\theta} = \rho^2h_{\theta}$ and $\rho$ is 
    any defining function for $\tM \cap \tS$ in $W$.
\end{corollary}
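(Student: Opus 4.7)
The plan is to deduce Corollary \ref{normformconstcor} from Theorem \ref{normformconst} by a straightforward change of parametrization. First I would apply that theorem to obtain a sufficiently large neighborhood $V_0$ of $\tQ\cap\tS$ in $\tQ$, a neighborhood $\tU_0$ of $\tS$ in $\tX$, and the diffeomorphism $\psi:[0,\theta_0]\times V_0 \to \tU_0$. The key observation is that the restriction $\phi := \psi(0,\cdot): V_0 \to \tM$ is a diffeomorphism onto an open neighborhood $W_0$ of $\tM\cap\tS$ in $\tM$, and because $\psi$ sends $\{0\}\times(\tQ\cap\tS)$ to $\tM\cap\tS$, $\phi$ restricts to a diffeomorphism between the two corners.

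Given any sufficiently small $W \subset W_0$, set $V := \phi^{-1}(W) \subset V_0$, $\tU := \psi([0,\theta_0]\times V)$, and define
\[
  \zeta:[0,\theta_0]\times W \to \tU, \qquad \zeta(\theta,w) := \psi\bigl(\theta,\phi^{-1}(w)\bigr).
\]
Then $\zeta$ is a diffeomorphism with $\zeta|_{\{0\}\times W} = \id_W$ and $\zeta(\{\theta_0\}\times W) \subset \tQ$. Writing $\zeta = \psi\circ(\id\times\phi^{-1})$ and $\psi^*g = \sin^{-2}(\theta)(d\theta^2 + h_\theta)$ from the theorem, one computes $\zeta^*g = \sin^{-2}(\theta)(d\theta^2 + \tilde h_\theta)$ with $\tilde h_\theta := (\phi^{-1})^*h_\theta$. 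Since $\phi$ is a boundary-respecting diffeomorphism, this is a smooth one-parameter family of smooth AH metrics on $(W,\tM\cap\tS)$.

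The remaining conditions are inherited from Theorem \ref{normformconst}. For the angle identity: given $w\in\tM\cap\tS$ we have $\phi^{-1}(w)\in\tQ\cap\tS$, so $\Theta(\zeta(\theta,w)) = \Theta(\psi(\theta,\phi^{-1}(w))) = \theta$. For the vanishing of $\partial_\theta\brh_\theta|_{\rho=0}$: if $\rho$ is any defining function for $\tM\cap\tS$ in $W$, then $\phi^*\rho$ is a defining function for $\tQ\cap\tS$ in $V$, and the identity $\rho^2\tilde h_\theta = (\phi^{-1})^*\bigl((\phi^*\rho)^2 h_\theta\bigr)$ transports the vanishing condition under pullback by $\phi^{-1}$. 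For uniqueness: given a competing $\zeta'$ on $[0,\theta_0]\times W$, let $\alpha := \zeta'(\theta_0,\cdot):W\to\tQ$ and define $\psi'(\theta,v) := \zeta'(\theta,\alpha^{-1}(v))$ on $[0,\theta_0]\times V'$ with $V' := \alpha(W)$; then $\psi'$ satisfies the hypotheses of Theorem \ref{normformconst}, so $\psi'$ agrees with $\psi$ on the overlap by the latter's uniqueness, whence $\zeta' = \zeta$.

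There is no substantial analytic obstacle here, since Theorem \ref{normformconst} does all the heavy lifting. The only bookkeeping subtlety is verifying the corner-to-corner identification under $\phi$ and confirming that the condition $\partial_\theta\brh_\theta|_{\rho=0} = 0$ is invariant enough under reparametrization by $\phi^{-1}$ to transfer from $\tQ$-sided to $\tM$-sided defining functions.
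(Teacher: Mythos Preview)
Your proposal is correct and follows essentially the same approach as the paper: the paper also defines $\phi:V\to W$ by $\phi(q)=\psi(0,q)$ and sets $\zeta(\theta,m)=\psi(\theta,\phi^{-1}(m))$, deducing uniqueness by reversing the construction. You have in fact supplied more detail than the paper on the angle identity, the transfer of the $\partial_\theta\brh_\theta|_{\rho=0}=0$ condition, and the uniqueness argument.
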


We can put this in a yet more refined form. For the following, we let $[k] = \left\{ h|_{TS}: h \in [h] \right\}$;
so $[k]$ is a conformal class of metrics on $S$.

\begin{corollary}
  \label{polarcor}
  Let $(\tX,\tM,\tQ,\tS)$, $(X,M,Q)$, and $g$ be as in Theorem \ref{normformconst}, with again a constant angle
  $\theta_0$ between $Q$ and $M$.
  For any $k \in [k]$ and for sufficiently small
  $\varepsilon > 0$, there is a neighborhood $\tU$ of $\tS$ in $\tX$
  and a unique diffeomorphism $\chi:[0,\theta_0]_{\theta} \times S \times [0,\varepsilon)_{\rho} \to \tU$ such that
  $b\circ\chi|_{ \left\{ 0 \right\}\times S \times \left\{ 0 \right\}} = \id_S$ and
    \begin{equation*}
      \chi^*g = \frac{d\theta^2 + h_{\theta}}{\sin^2\theta},
    \end{equation*}
    where $h_{\theta}$ is a smooth one-parameter family of smooth AH metrics on $S \times [0,\varepsilon)$ with
    \begin{equation*}
      h_0 = \frac{d\rho^2 + k_{\rho}}{\rho^2},
    \end{equation*}
    where $k_{\rho}$ is a smooth one-parameter family of smooth metrics on $S$ with $k_0 = k$, and where 
    $\tM = \chi(\left\{ \theta = 0 \right\})$, $\tQ = \chi(\left\{ \theta = \theta_0 \right\})$, and
    $\tS = \chi(\left\{ \rho = 0 \right\})$. Moreover, $\partial_{\theta}(\bar{h}_{\theta}|_{\rho = 0}) = 0$,
    where $\brh = \rho^2h$.
\end{corollary}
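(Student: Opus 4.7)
The plan is to reduce Corollary \ref{polarcor} to Corollary \ref{normformconstcor} followed by a single application of the classical Graham--Lee geodesic normal form (\cite{gl91}) to the AH slice metric $h_0$ produced by that corollary. Corollary \ref{normformconstcor} already yields the $\theta$-coordinate and an AH one-parameter family $h_\theta$ on a neighborhood $W$ of $\tM \cap \tS$ in $\tM$; what remains is only to introduce polar-type coordinates $(s,\rho)$ on the slice $\{\theta = 0\}$ and to transport them constantly in $\theta$.

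Concretely, I first apply Corollary \ref{normformconstcor} to obtain a neighborhood $W$ of $\tM \cap \tS$ in $\tM$, a neighborhood $\tU_1$ of $\tS$ in $\tX$, and a unique diffeomorphism $\zeta:[0,\theta_0]_\theta \times W \to \tU_1$ with $\zeta|_{\{0\}\times W} = \id_W$, $\zeta^*g = \sin^{-2}\theta\,(d\theta^2 + h_\theta)$, each $h_\theta$ a smooth AH metric on $(W,\tM\cap\tS)$, and $\partial_\theta \bar h_\theta|_{\rho = 0} = 0$ for any defining function $\rho$ of $\tM\cap\tS$ in $W$. I then verify that the conformal infinity of $h_0$ at $\tM\cap\tS \approx S$ equals $[k]$: restricting the compactification $\sin^2\theta \cdot \zeta^*g$ to $\{\theta = 0\}$ yields the induced metric on $M$ defined after Theorem \ref{mainthm}, whose restriction to $TS$ lies in $[k]$ by definition.

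Given the chosen representative $k \in [k]$, the Graham--Lee theorem then produces $\varepsilon > 0$ and a unique diffeomorphism $\psi_0:S \times [0,\varepsilon)_\rho \to W_0 \subset W$, with $W_0$ a neighborhood of $\tM\cap\tS$, such that $\psi_0|_{S \times \{0\}}$ is the identification induced by $b$ and $\psi_0^* h_0 = \rho^{-2}(d\rho^2 + k_\rho)$ with $k_0 = k$. I then set $\chi(\theta,s,\rho) := \zeta(\theta,\psi_0(s,\rho))$ and $\tU := \zeta([0,\theta_0] \times W_0)$; this is a diffeomorphism onto a neighborhood of $\tS$ as a composition of diffeomorphisms, and pulling back gives $\chi^*g = \sin^{-2}\theta\,(d\theta^2 + \tilde h_\theta)$ with $\tilde h_\theta := \psi_0^* h_\theta$ a smooth family of AH metrics on $S\times[0,\varepsilon)$ satisfying $\tilde h_0 = \rho^{-2}(d\rho^2 + k_\rho)$. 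The identifications $\tM = \chi(\{\theta = 0\})$, $\tQ = \chi(\{\theta = \theta_0\})$, $\tS = \chi(\{\rho = 0\})$, and $b\circ\chi|_{\{0\}\times S\times\{0\}} = \id_S$ follow from the corresponding properties of $\zeta$ and $\psi_0$; and the condition $\partial_\theta \overline{\tilde h_\theta}|_{\rho = 0} = 0$ is inherited from the analogous condition on $h_\theta$, since $\psi_0$ is independent of $\theta$ and $\rho$ pulls back via $\psi_0$ to a defining function of $\tM\cap\tS$ in $W_0$. Uniqueness of $\chi$ reduces to uniqueness in each of Corollary \ref{normformconstcor} and the Graham--Lee theorem.

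The principal subtlety, and the one I expect to require most care, is the identification in the second step of the conformal infinity of $h_0$ at $\tM\cap\tS$ with $[k]$: since the admissible metric $g = b^*g_+ + \mathcal L$ carries a nontrivial perturbation at $\tS$, one must trace carefully how the compactification of $g$ near $\tM$ descends to an element of $[h]$ on $M$, and how its restriction to $TS$ at a point of $S$ picks out an element of $[k]$. The remaining steps are essentially formal composition of two previously established normal forms.
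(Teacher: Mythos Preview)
Your proposal is correct and follows essentially the same route as the paper: apply Corollary \ref{normformconstcor} to obtain $\zeta$ and the AH family $h_\theta$ on $W$, then apply the Graham--Lee normal form to $h_0$ to get the $(s,\rho)$ coordinates, and compose. The only difference is that you make explicit the identification of the conformal infinity of $h_0$ with $[k]$ before invoking Graham--Lee, which the paper leaves implicit; this is a reasonable point of care but not a departure in approach.
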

Notice this generalizes the form of the hyperbolic metric in
(\ref{hyppolg}).

A normal form of a similar kind for edge spaces was constructed in \cite{gk12}. However, the normal form derived
there corresponds to a flow transverse to the edge boundary, whereas the flow generated by $u$ in (\ref{normformeq})
is tangent to the edge face $\tS$. Another difference is that the normal form constructed here takes a special form
at two different faces, $\tM$ and $\tQ$, as opposed to one.

The paper is organized as follows.  
In Section \ref{setup} we define cornered asymptotically hyperbolic manifolds and their blowups, as well
as construct a class of product decompositions that will be ubiquitous throughout the paper. We note that the
blown-up face $\tS$ has an edge structure in the sense of \cite{mel08}, which meets the AH face $\tM$, and we define 0-edge bundles
as the natural bundles associated to this structure. We then use this to
define and discuss admissible metrics. In Section \ref{geods}, inspired by the convexity arguments of \cite{bo69},
we use a natural asymptotic solution to $\nabla^2_gw = wg$ to derive the central properties of the $g$-geodesics 
leaving
$\tQ$ normally. Our result shows that they approximately generalize the behavior of the analogous 
geodesics in hyperbolic space, namely that
they do not return to $\tQ$ or $\tS$ and that they approach $\tM$ normally. 
In section \ref{exponential}, we study the geodesic flow equations to extend the exponential map
to the compactified normal bundle and show that the extended map is smooth and a local diffeomorphism.
The extensive debt this paper owes to \cite{maz86} is especially clear here, where
we regularize the flow equations using the method developed there.
The final substantial step, in Section \ref{injectivity}, is to show that the normal exponential map 
is actually injective on a suitably restricted neighborhood
of $\tS$. Many of the previous (and elegant) Cartan-Hadamard-type proofs adapt with difficulty, if at all, to the noncomplete 
and local setting studied here. The homotopy-lifting approach of \cite{her63}, however,
adapts well to this setting, and it enables us to show injectivity. In section \ref{proofs}, the above theorems
and their corollaries follow quickly.

In a sequel, we will use the normal form here constructed to study formal existence and expansion of
cornered asymptotically hyperbolic Einstein metrics, after the manner of \cite{fg12} in the usual case. 

\subsection*{Acknowledgments}
This is doctoral work under the supervision of C. Robin Graham at the University of Washington. I am most grateful to him for suggesting this and related problems, and
for the really extraordinary time and attention he has given to answering questions and making suggestions large and small.
I am also grateful to Andreas Karch for bringing the topic to both of our attention
in the first place, to John Lee and Daniel Pollack for numerous helpful conversations, and to Hart Smith
for financial support. This research was partially supported by the National Science Foundation under RTG Grant 
DMS-0838212 and Grant DMS-1161283.

\section{Cornered Spaces and Blowups}\label{setup}

Recall that a conformally compact manifold is a smooth compact manifold $X$ with boundary $M$, equipped on the interior with a smooth metric $g$
such that, for any defining function $\varphi$ of the boundary $M$, the metric $\varphi^2g$ extends smoothly to a metric on $X$.
The boundary $M$ is called the infinite boundary or boundary at infinity, and if $h = \varphi^2g|_{TM}$ for some defining function
$\varphi$, then the conformal class $[h]$ on $M$ is well-defined and is called the conformal infinity.
A conformally compact manifold is called \emph{asymptotically hyperbolic} if for some (and hence any) such defining function $\varphi$,
we have $|d\varphi|_{\varphi^2g} = 1$ on $M$. The name is due to the fact, shown first in \cite{maz86}, that such manifolds have all sectional curvatures
asymptotic to $-1$.

A natural generalization of a conformally compact manifold is to consider manifolds that have \emph{finite} boundaries as well as the boundary at infinity; a
simple example would be half of the Poincar\'{e} ball. In such spaces, the finite and infinite boundaries meet in a corner, which is at infinity.

We first give an intrinsic definition of this situation.

\begin{definition}
  \label{intrindef}
  A \emph{cornered space} is a smooth manifold with codimension-two corners, $X^{n + 1}$, such that
  \begin{enumerate}[(i)]
    \item There are submanifolds with boundary $M^n \subset \partial X$ and $Q^n \subset \partial X$ of the boundary
      $\partial X$, such that $\emptyset \neq S = M \cap Q$ is the mutual boundary, and is the entire codimension-two corner of $X$, and such that
      $\partial X = M \cup Q$; and
    \item the corner $S \subset M$ is a smooth, compact hypersurface in $M$.
  \end{enumerate}
  We denote a cornered space by $(X,M,Q)$, and we set $\mathring{X} = X \setminus(Q \cup M)$.

  Given a cornered space $(X,M,Q)$, a smooth (resp. $C^k$) \emph{cornered conformally compact metric} on $X$ is a smooth Riemannian metric
  $g_+$ on $X \setminus M$ such that, for any smooth defining function $\varphi$ for $M$, the metric $\varphi^2g_+$ extends
  to a smooth (resp. $C^k$) metric on $X$. We call such a metric a \emph{cornered asymptotically hyperbolic (CAH) metric} if for some
  (hence any) such defining function $\varphi$, the condition $|d\varphi|_{\varphi^2g_+} = 1$ holds along $M$.

  A smooth (resp. $C^k$) \emph{cornered asymptotically hyperbolic (CAH) space} is a cornered space $(X,M,Q)$ together with a smooth (resp. $C^k$)
  CAH metric $g_+$. We will denote such a space by $(X,M,Q,g_+)$. The definition for cornered conformally compact space is analogous.
\end{definition}
For a cornered conformally compact space $(X,M,Q,g_+)$, the \emph{conformal infinity} $[h]$ is the conformal class
$[\varphi^2g_+|_{TM}]$ on $M$, where $\varphi$ is a defining function for $M$. Notice that a consequence of the
fact that $X$ is a manifold with corners
is that the boundary components $M$ and $Q$ intersect transversely.

For each $x \in S$, we define $\theta_0(x)$ to be the angle between $M$ and $Q$ at $X$ with respect to $\varphi^2g_+$, where $\varphi$ is any smooth defining
function for $M$. Plainly $\theta_0 \in C^{\infty}(S)$.

It will be important to our analysis to be able to view $X$ as a submanifold of a larger AH manifold without corner. By doubling across $Q$ (\cite{mel96},
Chapter 1)
and using partitions of unity, we may construct a global AH manifold $(\bX,\bg_+)$ with boundary $\bM$, such that $\mathring{X}$ is an open submanifold
of $\bX$
with $\partial \mathring{X} = M \cup Q$, where $M \subset \bM$ and $Q \subset X$ is a hypersurface in $\bX$, and such
that $\bg_+|_{X \setminus M} = g_+$. The extension $\bg_+$ is not canonical, of course.

As we are planning to study polar-like coordinates at the codimension-two hypersurface $S$, and since such coordinates must be singular there,
we employ the usual measure of blowing up $X$ along $S$ (\cite{mel08}). Let $X$ be a cornered space, with $M$, $Q$, and $S$ as in the definition. 
For $s \in S$, define $N_sS = T_sX / T_sS$, which is a vector space of dimension two. Let $NS$ be the vector
bundle $NS = \sqcup_{s \in S} N_sS$. Let $N_+S \subset NS$ be the inward-pointing normal vectors (including those tangent to $\partial X = Q \cup M$).
Thus $N_+S$ is a bundle with fiber a closed cone in $\mathbb{R}^2$ and base $S$. Finally, let $\tS = (N_+S\setminus \left\{ 0 \right\}) / \mathbb{R}^+$,
which is the total space of a fibration over $S$ with fiber the closed interval $[0,1]$.
Set $\widetilde{X} = (X \setminus S) \sqcup \widetilde{S}$, and define the \emph{blow-down map} $b:\widetilde{X} \to X$ by
$b(x) = x$ ($x \in X \setminus S$) and $b(\tilde{s}) = \pi(\tilde{s})$ ($\tilde{s} \in \widetilde{S}$), 
where $\pi$ is the natural projection.
Then as shown in \cite{mel08}, $\widetilde{X}$ has a unique smooth structure as a manifold with corners of codimension two 
such that $b$ is smooth, $b|_{\tX \setminus \tS}:\tX \setminus \tS \to X \setminus S$ is a diffeomorphism onto its image,
and $db_{\ts}$ has rank $n$ for $\ts \in \tS$. Moreover, polar coordinates on $X$ centered along $S$ lift to smooth coordinates.
We set $\tM = \overline{b^{-1}(M \setminus S)}$ and $\tQ = \overline{b^{-1}(Q \setminus S)}$. Then
$b|_{\tM}:\tM \to M$ and $b|_{\tQ}:\tQ \to Q$ are diffeomorphisms.

Recall that an edge structure on a manifold with boundary is a fibration of the boundary, and the associated edge vector fields are the vector
fields that are tangent to the fibers at the boundary (\cite{maz91}). 
An important special case is a 0-structure (\cite{mm87}), for which the boundary fibers are points
and the edge vector fields are those that vanish at the boundary. On our blowup space $\tX$, the blown-up face $\tS$ is the total space
of the fibration $b|_{\tS}:\tS \to S$ with interval fibers, while we can view $b|_{\tM}:\tM \to M$ as a fibration whose fibers are points. We will
refer to the structure defined by these two fibrations as a 0-edge structure, and the associated 0-edge vector fields are the smooth vector fields
on $\tX$ which are tangent to the fibers at $\tS$, and which vanish at $\tM$.

The 0-edge vector fields may be easily expressed in appropriate local coordinates. Let $\theta$ be a defining function for $\tM$ whose restriction to
each fiber of $\tS$ is a fiber coordinate taking values in $[0,\pi)$; let $\rho$ be any defining function for $\tS$; and locally let
$x^s, 1 \leq s \leq n - 1$, be the lifts to $\tX$ of functions on $X$ that restrict to local coordinates on $S$. Then the vector fields
\begin{equation*}
  \sin\theta \frac{\partial}{\partial \theta}, \quad \rho\sin\theta \frac{\partial}{\partial x^s}, \quad \rho\sin\theta \frac{\partial}{\partial \rho}
\end{equation*}
span the 0-edge vector fields over $C^{\infty}(\tX)$. As in the usual edge case, there is a well-defined vector bundle ${}^{0e}T\tX$
whose smooth sections are the 0-edge vector fields. The smooth sections of the dual bundle ${}^{0e}T^*\tX$ are locally spanned by
\begin{equation}
  \label{dualframe}
  \frac{d\theta}{\sin\theta},\quad\frac{dx^s}{\rho\sin\theta}, \quad\frac{d\rho}{\rho\sin\theta}.
\end{equation}
By a 0-edge metric we will mean a smooth positive definite section $g$ of $S^2({}^{0e}T^*\tX)$. This is equivalent to the condition that
locally $g$ may be written as
\begin{equation*}
  g = \left( \begin{array}{ccc}
    \frac{d\theta}{\sin\theta},&\frac{dx^s}{\rho\sin\theta},&\frac{d\rho}{\rho\sin\theta}\end{array}\right)
    G
  \left(
  \begin{array}{ccc}
    \frac{d\theta}{\sin\theta}\\
    \frac{dx^s}{\rho\sin\theta}\\
    \frac{d\rho}{\rho\sin\theta}
  \end{array}
    \right),
\end{equation*}
where $G$ is a smooth, positive-definite matrix-valued function on $\tX$. This allows us to define the class of metrics that we will study.

\begin{definition}
  \label{admis}
  An admissible metric on $\tX$ is a 0-edge metric $g$ on $\tX$ which can be written in the form
  \begin{equation*}
    g = b^*g_+ + \mathcal{L},
  \end{equation*}
  where $g_+$ is a smooth cornered asymptotically hyperbolic metric on $X$ and $\mathcal{L}$ is a smooth section of
  $S^2({}^{0e}T^*\tX)$ that vanishes on $\tS$ and $\tM$.
\end{definition}
The latter condition is the same as saying that $\mathcal{L} = (\rho\sin\theta)\ell$, for some smooth section $\ell$
of $S^2({}^{0e}T^*\tX)$. We will see below that if $g_+$ is a smooth CAH metric on $X$, then
$b^*g_+$ is a 0-edge metric.

Since $b|_{\tX \setminus (\tM \cup \tS)}:\tX \setminus (\tM \cup \tS) \to X \setminus M$ is a diffeomorphism, an admissible
$g$ uniquely determines a smooth metric $g_X$ on $X \setminus M$ satisfying $b^*g_X = g$ on
$\tX \setminus (\tM \cup \tS)$. Since $\mathcal{L}$ vanishes on $\tS$ and $\tM$, it is not hard
to see that $g_X$ is a $C^0$ CAH metric on $X$. Thus we will call a metric $g_X$ on $X \setminus M$ an admissible metric on $X$ if
$b^*g_X$ extends to an admissible metric on $\tX$.

Observe that an admissible metric $g_X$ on $X$ determines a well-defined angle function $\Theta$ on the blown-up face $\tS$,
which serves as a smooth fiber coordinate.
Let $\ts \in \widetilde{S}$, with $s = b(\ts) \in S$. Then, under one interpretation, 
$\ts$ naturally represents a hyperplane $P_{\ts}$ in $T_sX$
containing $T_sS$. The angle $\Theta(\ts)$ between $P_{s}$ and $T_sM$ is well-defined. 
It can be computed as follows: let $\varphi$ be any defining function for $M$, 
and $\bar{g}_X = \varphi^2g_X$. Let $\bar{\nu}_M \in T_sM$
be normal to $T_sS$, inward pointing in $M$, and unit $\bar{g}_X$-length (this is uniquely defined and continuous,
by the continuity of admissible metrics just observed). 
Similarly, let $\bar{\nu}_{P_{\ts}}$ be inward-pointing in $P_{\ts}$, normal to $T_sS$, and unit length. Then
$\Theta(\ts) = \cos^{-1}(\bar{g}_X(\bar{\nu}_M,\bar{\nu}_{P_{\ts}}))$. We could also have defined $\Theta$ using
$g_+$, and in particular, it is clear that $\Theta \in C^{\infty}(\tS)$.
It is easy to show that this is defined independently of $\varphi$. Thus, $\Theta$ is well-defined.

Let $g_+$ be a smooth CAH metric on $X$.
We construct a product identification on $X$ that we will use extensively throughout, and we then use it
to show that $b^*g_+$ is a 0-edge metric. Choose
an extension $(\bX,\bg_+)$. To each representative $\bh$ on $\bM$ we can associate a neighborhood $\bU$ of $\bM$ in $\bX$ and a unique diffeomorphism
$\chi:[0,\varepsilon)_r \times \bM \to \bU$ such that $\chi|_{\tM} = \id$ and $\chi^*\bg_+ = r^{-2}(dr^2 + \bh_r)$,
where $\bh_0 = \bh$.
Now let $y$ be a geodesic defining function for $S$ in $\bM$ with respect to the metric $\bh$ -- that is, a solution 
near $S$ on $\bM$ to the equation
$|dy|_{\bh}^2 = 1$ with $y|_S \equiv 0$. We choose $y > 0$ on $M$. Then there is 
a diffeomorphism $\psi$ from $S \times (-\delta,\delta)_y$ to a neighborhood $W$ of $S$ in $\bM$
such that $\psi^*\bh = dy^2 + k_y$, 
where $k_y$ is a smooth one-parameter family of metrics on $S$.
Thus, we have shown that there is
a neighborhood $\bU$ of $S$ in $\bX$ and a unique diffeomorphism $\varphi: [0,\varepsilon)_r \times S
\times (-\delta,\delta)_y \to \bU$, 
for which $\varphi|_{\{0\}\times S\times \{0\}} = \id_S$ and
\begin{equation}
\label{cartform}
\varphi^*\bg_+ = \frac{dr^2 + \bh_r}{r^2},
\end{equation}
where $\bh_r$ is a one-parameter family of metrics on $S \times (-\delta,\delta)_y$ with 
\begin{equation}
  \label{g0}
  \bh_0 = dy^2 + k_y.
\end{equation}
We call this the product identification for $\bg_+$ determined by $\bh$, and we let $\pi_S:\bU \to S$ be the projection onto $S$ determined by it.

In cases where $Q$ makes an obtuse angle with $M$, the values inside $X$ of the functions $r$ and $y$ just constructed
will depend on $\bg_+$ outside $X$. We will use the product identification to analyze the behavior of geodesics in $X$,
which of course is independent of the extension chosen.

We obtain smooth coordinates on the blowup $\tX$ near $\tS$ by introducing polar coordinates on $X$. Using the coordinates
defined above, these are given by
\begin{equation}
  \label{polardef}
  r = \rho\sin\theta, \quad y = \rho\cos\theta,\text{ with } \rho \geq 0, \quad 0 \leq \theta < \pi.
\end{equation}
Then locally, a product identification on the
blowup may be given by $p \mapsto (\theta(p),\pi_S(p),\rho(p))$. Observe that
$\widetilde{S}$ is given precisely by $\rho = 0$ and $\widetilde{M}$ is given by $\theta = 0$.
For any admissible metric $g$ on $\tX$ such that $g = b^*g_+ + \mathcal{L}$,
this identification on the blowup will be called a \emph{polar $g$-identification},
or depending on context, \emph{polar $g$-coordinates}. Notice that by (\ref{cartform}) and
(\ref{g0}),
$\theta|_{\tS} = \Theta$.

Now by (\ref{polardef}), we have
\begin{align*}
  dr &= \rho \cos \theta d\theta + \sin\theta d\rho\\
  dy &= -\rho\sin\theta d\theta + \cos\theta d\rho.
\end{align*}
Let $\left\{ x^s \right\}_{s = 1}^{n - 1}$ be local coordinates on $S$. Extend these into $\tX$ near $\tS$
using the product identification. Note by (\ref{g0}) that in (\ref{cartform}), $\bh_r = dy^2 + k_y + O(r)$.
It is then straightforward to compute the metric in our new coordinates:
\begin{equation}
  \label{polformbetter}
  b^*g_+ = \left(\begin{array}{ccc}
    \frac{d\theta}{\sin\theta}, & \frac{dx^s}{\rho \sin\theta}, & \frac{d\rho}{\rho \sin\theta}\end{array}\right)
    G
      \left(\begin{array}{c}
        \frac{d\theta}{\sin\theta} \\ \frac{dx^s}{\rho\sin\theta} \\ \frac{d\rho}{\rho\sin\theta}\end{array}\right),
\end{equation}
where
\begin{equation}
  \label{polformwasbetter}
      G = \left(\begin{array}{ccc}
      1 + O(\rho\sin^3\theta) & O(\rho \sin^2\theta) & O(\rho\sin^2\theta)\\
      O(\rho\sin^2\theta) & k_{\rho\cos\theta} + O(\rho\sin\theta) & O(\rho\sin\theta)\\
      O(\rho\sin^2\theta) & O(\rho\sin\theta) & 1 + O(\rho\sin\theta)\end{array}\right)
\end{equation}
Thus, $b^*g_+$ is a 0-edge metric.
Notice that $k_{\rho\cos\theta} = k_{\rho} + O(\rho \sin^2\theta)$.
This yields the following.
\begin{proposition}
  In a polar identification, an admissible metric $g$ on $\tX$ takes the form
  \begin{equation}
    \label{polform}
    g = \frac{1}{\sin^2(\theta)}\left[ d\theta^2 + \frac{d\rho^2 + k_{\rho}}{\rho^2} \right] + (\rho\sin\theta)\ell,
  \end{equation}
  where $k_{\rho}$ is a one-parameter family of metrics on $S$ and $\ell \in C^{\infty}(S^2({}^{0e}T^*\tX))$.
\end{proposition}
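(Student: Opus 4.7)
The plan is to read off the statement directly from the already-computed local expression for $b^*g_+$ in (\ref{polformbetter})--(\ref{polformwasbetter}), together with the definition of admissible metric. The first step is to invoke Definition \ref{admis} to write $g = b^*g_+ + \mathcal{L}$ with $\mathcal{L} = (\rho\sin\theta)\ell''$ for some $\ell'' \in C^\infty(S^2({}^{0e}T^*\tX))$. Next I would split the matrix $G$ from (\ref{polformwasbetter}) as $G = G_0 + G_1$, where
\begin{equation*}
  G_0 = \begin{pmatrix} 1 & 0 & 0 \\ 0 & k_\rho & 0 \\ 0 & 0 & 1 \end{pmatrix}
\end{equation*}
is the diagonal piece producing exactly $\frac{1}{\sin^2\theta}\bigl[d\theta^2 + \rho^{-2}(d\rho^2 + k_\rho)\bigr]$ when sandwiched between the 0-edge coframe, and $G_1 := G - G_0$ is the remainder.

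The second step is the bookkeeping: show that every entry of $G_1$ is of the form $(\rho\sin\theta)\cdot f$ with $f \in C^\infty(\tX)$ (possibly matrix-valued in the $(x^s,x^t)$ block). The $(3,3)$ and $(2,3)$ entries are $O(\rho\sin\theta)$ outright. The $(1,1),(1,2),(1,3)$ entries are $O(\rho\sin^2\theta)$ or $O(\rho\sin^3\theta)$, which factor as $\rho\sin\theta$ times a smooth function vanishing at $\tM$. The $(2,2)$ entry requires the substitution $k_{\rho\cos\theta} = k_\rho + O(\rho\sin^2\theta)$ noted in the text (from $\cos\theta - 1 = -2\sin^2(\theta/2)$ and smoothness of $s \mapsto k_s$), after which the remainder there is again $O(\rho\sin\theta)$. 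Sandwiching $G_1$ between the 0-edge coframe (\ref{dualframe}) then produces a section $(\rho\sin\theta)\ell'$ with $\ell'$ a smooth section of $S^2({}^{0e}T^*\tX)$, since the coframe was already shown to span this bundle.

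Combining these two steps gives
\begin{equation*}
  g = b^*g_+ + \mathcal{L} = \frac{1}{\sin^2\theta}\Bigl[d\theta^2 + \frac{d\rho^2 + k_\rho}{\rho^2}\Bigr] + (\rho\sin\theta)(\ell' + \ell''),
\end{equation*}
and setting $\ell := \ell' + \ell''$ gives (\ref{polform}). The only place any genuine verification is needed is in the bookkeeping for $G_1$, and even that reduces to observations the author has already flagged in the paragraph preceding the proposition; so the "obstacle" is really just making sure the error estimates in (\ref{polformwasbetter}) are each divisible by $\rho\sin\theta$ as smooth functions on $\tX$, rather than merely $O(\rho\sin\theta)$ in the analytic sense. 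This divisibility follows because the error terms in (\ref{polformwasbetter}) arise from Taylor-expanding the smooth tensor $\bh_r$ in $r = \rho\sin\theta$ and from the $y$-dependence through $\rho\cos\theta$, both of which yield smooth factors of $\rho\sin\theta$ with smooth coefficients on the blowup.
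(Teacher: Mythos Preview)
Your proposal is correct and follows essentially the same approach as the paper: the proposition is stated immediately after the computation (\ref{polformbetter})--(\ref{polformwasbetter}) with the phrase ``This yields the following,'' so the paper's proof is exactly the splitting $G = G_0 + G_1$ you describe, together with the definition of admissibility. You have simply made explicit the bookkeeping that the paper leaves to the reader, including the observation about $k_{\rho\cos\theta}$ and the point that the $O(\cdot)$ terms are genuinely smooth multiples of $\rho\sin\theta$ on the blowup.
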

We note that the statement that $g$ can be written in the form (\ref{polform}) is equivalent to the statement that
it can be written as
\begin{equation*}
  g = \frac{1}{\sin^2(\theta)}\left[ d\theta^2 + \frac{d\rho^2 + k_{\theta,\rho}}{\rho^2} \right] + (\rho\sin\theta)\ell,
\end{equation*}
where $\ell$ is as before and where $k_{\theta,\rho}$ is a two-parameter family of metrics on $S$ such that
$k_{\theta,0}$ is independent of $\theta$.

Notice that for the hyperbolic metric, (\ref{hyppolg}) exhibits the form (\ref{polform}) with $k = |dx|^2$ and $\ell = 0$.

It will be useful to have equation (\ref{polform}) expressed in block form. On $\tX$ in the coordinates $(\theta,x^s,\rho)$, the
metric takes the form

\begin{equation}
  \label{g}
  g_{ij} = \csc^2(\theta)\left(\begin{array}{ccc}
    1 + O(\rho\sin\theta) & O(\sin \theta) & O(\sin \theta)\\
    O(\sin \theta) & \rho^{-2}k_{\rho} + O(\rho^{-1}\sin\theta) & O(\rho^{-1}\sin\theta)\\
    O(\sin\theta) & O(\rho^{-1}\sin\theta) & \rho^{-2} + O(\rho^{-1}\sin\theta)
  \end{array}\right).
\end{equation}
This may also be written
\begin{equation*}
  g_{ij} = \csc^2(\theta)A(\rho) \left(\begin{array}{ccc}
      1 + O(\rho\sin\theta) & O(\rho\sin\theta) & O(\rho\sin\theta)\\
      O(\rho\sin\theta) & k_{\rho} + O(\rho\sin\theta) & O(\rho\sin\theta)\\
      O(\rho\sin\theta) & O(\rho\sin\theta) & 1 + O(\rho\sin\theta)\end{array}\right)
      A(\rho),
\end{equation*}
where
\begin{equation*}
  A(\rho) = \left(\begin{array}{ccc}
    1 & &\\
    & \rho^{-1} &\\
    & & \rho^{-1}\end{array}\right).
\end{equation*}
This allows us easily to use Cramer's rule to find that
\begin{equation}
  \label{ginv}
  g^{ij} = \sin^2(\theta)\left(\begin{array}{ccc}
    1 + O(\rho\sin\theta) & O(\rho^2\sin\theta) & O(\rho^2\sin\theta)\\
    O(\rho^2\sin\theta) & \rho^2k_{\rho}^{-1} + O(\rho^3\sin\theta) & O(\rho^3\sin\theta)\\
    O(\rho^2\sin\theta) & O(\rho^3\sin\theta) & \rho^2 + O(\rho^3 \sin\theta)\end{array}\right).
\end{equation}

\subsection{Notation}
Throughout this paper, $(\tX,\tM,\tQ,\tS)$ will be the blowup of a cornered space $(X,M,Q)$, with blowdown map
$b:\tX \to X$. We let $g = b^*g_X = b^*g_+ + (\rho\sin\theta)\ell$ be an admissible metric on $\tX$.
Except where noted otherwise, $\theta$ and $\rho$ will denote the polar coordinates in a polar $g$-coordinate
system. Similarly, $r = \rho\sin\theta$ will denote the geodesic defining function on $X \subset \bX$
in terms of which they were defined. The projection onto the $S$ factor will be denoted $\pi_S$.

$X$ will be of dimension $n + 1$, where unless otherwise specified, $n \geq 2$ always.

We use index notation in polar coordinates. When doing so, $0$ will refer to the
first factor $\theta$, and $n$ will refer to the last factor $\rho$. The indices $1 \leq s,t \leq n - 1$ will
refer to local coordinates on the second factor, $S$, while the indices $0 \leq i, j \leq n$ will run over all
$n + 1$ coordinates. Finally, $1 \leq \mu, \nu \leq n$ will run over $S$ and the last factor.

The metric $g$ will be used to raise and lower indices, except that
$g^{ij}$ is the inverse metric. We write $\bar{g} = \rho^2\sin^2(\theta)g = r^2g$ 
for the metric compactified with respect to polar $g$-coordinates.
Note that $\bar{g}$ is degenerate along $\tS$, as $\left|\frac{\partial}{\partial \theta}\right|_{\bar{g}} = 0$ there.

If $a > 0$, we define
\begin{equation*}
  \tQ_a = \left\{ q \in \tQ: 0 < \rho(q) < a \right\},
\end{equation*}
and
\begin{equation*}
  \ctQ{a} = \left\{ q \in \tQ: 0 \leq \rho(q) < a \right\}.
\end{equation*}
For general open $V \subseteq \tQ$, we define $\underline{V} = V \cup L$, where $L$ is the interior
in $\tQ \cap \tS$ of the set of limit points of $V$ in $\tS$.
We also define
\begin{equation*}
  \tX_a = \left\{ x \in \tX: 0 < \rho(x) < a \right\}.
\end{equation*}

We let $\nu$ be the inward-pointing unit normal vector field on $\tQ \setminus \tS$ with respect to $g$.  If $q \in \tQ \setminus \tS$, we let 
$\gamma_q$ denote the $g$-geodesic that begins at $q$ and has initial tangent vector
$\gamma_q'(0) = \nu_q$.

If $A$ is a covariant $k$-tensor, we write $A = O_g(f)$ to indicate that $|A|_g = O(f)$; or equivalently, that if
$Y_1,\dots,Y_k$ are $g$-unit vector fields, then we have $A(Y_1,\dots,Y_k) = O(f)$ with constant independent of the $Y_i$.
Similarly, if $Y$ is a vector field, we write $Y = O_g(f)$ to indicate $|Y|_g = O(f)$. Note that this condition 
is independent of the particular admissible metric $g$.

\section{Behavior of Normal Geodesics}\label{geods}

As a starting point to our study of the normal exponential map over the finite boundary $\tQ$ of the blowup, 
in this section we study the basic behavior of $g$-geodesics that leave $\tQ$ normally, where $g$ is an admissible metric on
$(\tX,\tM,\tQ,\tS)$.
Throughout, we will work in a polar $g$-identification as constructed in the previous section, and we will let
$\tU$ be a neighborhood of $\tS$ in $\tX$ on which such coordinates exist.

Our first task is to study the normal field to $\tQ \setminus \tS$ near $\tS$. 
\begin{lemma}
  \label{normlem}
  Sufficiently near $\tS$, the normal field $\nu$ on $\tQ \setminus \tS$ satisfies
  \begin{equation}
    \label{qnorm}
    \nu = -\sin\theta \frac{\partial}{\partial \theta} + O_g(\rho).
  \end{equation}
  In particular, $\nu$ extends smoothly to $\tQ \cap \tS$.
\end{lemma}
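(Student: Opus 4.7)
The plan is to work in a polar $g$-coordinate system $(\theta,x^s,\rho)$ near a point $p \in \tQ \cap \tS$, exhibit $\tQ$ locally as a graph over the $(x^s,\rho)$-factor, and compute $\nu$ directly from the formula $\nu = -\grad_g u / |du|_g$ for a suitable defining function $u$ of $\tQ$.

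First I would verify that $\tQ$ is locally of the form $\left\{\theta = f(x^s,\rho)\right\}$ with $f(\,\cdot\,,0) = \Theta$. Since $\tS = \{\rho = 0\}$ is parameterized by $(\theta, s)$ and $\tQ \cap \tS = \{\theta = \Theta(s)\}$, the tangent space $T_p(\tQ \cap \tS)$ is spanned by vectors of the form $(\partial_{s_k}\Theta)\partial_\theta + \partial_{s_k}$, which does not contain $\partial_\theta$; combined with the transversality of $\tQ$ and $\tS$ in $\tX$, this forces $\partial_\theta \notin T_p\tQ$, and the implicit function theorem yields the graph representation with boundary value $\Theta$.

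With $u = \theta - f$, I would expand $(\grad_g u)^k = g^{k\theta} - g^{ks}\partial_s f - g^{kn}\partial_\rho f$ using the form of $g^{ij}$ in (\ref{ginv}). The key observation is that the off-diagonal rows of $g^{ij}$ carry extra factors of $\rho^2$ relative to the diagonal entries, so the bounded quantities $\partial_s f$ and $\partial_\rho f$ only contribute at subleading order. A direct expansion gives $(\grad_g u)^\theta = \sin^2\theta + O(\rho\sin^3\theta)$ and $(\grad_g u)^s, (\grad_g u)^n = O(\rho^2\sin^2\theta)$, and then $|du|_g^2 = (\grad_g u)^i \partial_i u = \sin^2\theta\bigl(1 + O(\rho^2)\bigr)$. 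Dividing yields
\begin{equation*}
  \nu = -\sin\theta\bigl(1 + O(\rho\sin\theta)\bigr)\,\partial_\theta + O(\rho^2\sin\theta)\,\partial_s + O(\rho^2\sin\theta)\,\partial_\rho,
\end{equation*}
so that $\nu + \sin\theta\,\partial_\theta = O(\rho\sin^2\theta)\,\partial_\theta + O(\rho^2\sin\theta)\,\partial_s + O(\rho^2\sin\theta)\,\partial_\rho$.

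To conclude, I would measure this remainder in $g$ using the component lengths $|\partial_\theta|_g = O(\csc\theta)$, $|\partial_s|_g = O(\rho^{-1}\csc\theta)$, $|\partial_\rho|_g = O(\rho^{-1}\csc\theta)$ read off from (\ref{g}); each summand has $g$-norm $O(\rho)$, and the cross terms in $|\nu + \sin\theta\,\partial_\theta|_g^2$ are of strictly smaller order, which gives (\ref{qnorm}). The smoothness of the extension of $\nu$ to $\tQ \cap \tS$ is then immediate, since the coefficients just computed are smooth functions of $(x^s,\rho)$ right down to $\rho = 0$, where $\sin\theta$ is bounded below because $\Theta$ takes values in $(0,\pi)$. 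The main obstacle is precisely this bookkeeping of decay rates: the claimed $O(\rho)$ remainder, rather than a merely $O(\sin\theta)$ one, is forced by the extra powers of $\rho$ in the off-diagonal blocks of $g^{ij}$, and the graph representation is what allows the transverse derivatives of $f$ to appear as bounded quantities throughout the computation.
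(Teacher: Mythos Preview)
Your argument is correct and follows essentially the same route as the paper: represent $\tQ$ as a graph $\theta = f(x,\rho)$ via the implicit function theorem, compute $\grad_g u$ for $u = \theta - f$ using the block structure of $g^{ij}$ in (\ref{ginv}), and normalize. One small bookkeeping slip: the $O(\rho\sin\theta)$ correction in $g^{\theta\theta}$ means $|du|_g^2 = \sin^2\theta\bigl(1 + O(\rho)\bigr)$ rather than $1 + O(\rho^2)$, but this is harmless since on $\tQ$ one has $\sin\theta \asymp 1$ and only $|du|_g = \sin\theta + O(\rho)$ is needed.
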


\begin{proof}
  Using the implicit function theorem and the fact that $\frac{\partial}{\partial \theta}$
  is transverse to $\tQ \cap \tS$,
  we may write $\tQ$ near $\tS$ smoothly as $\theta = \psi(x,\rho)$. Define $f(\theta,x,\rho) = \psi(x,\rho) - \theta$.
  Then using (\ref{ginv}), we find in local coordinates that
  \begin{equation*}
    \grad f = g^{ij}\frac{\partial f}{\partial x^i}\frac{\partial}{\partial x^j} = 
    (-\sin^2(\theta) + O(\rho))\frac{\partial}{\partial \theta} + O(\rho^2),
  \end{equation*}
  while
  \begin{equation*}
    |\grad f|_g = \sin \theta + O(\rho),
  \end{equation*}
  where we keep in mind that $\sin\theta$ is bounded away from $0$ on $\tQ$.
  As $\nu = \frac{\grad f}{|\grad f|_g}$, the result follows.
\end{proof}

It has long been the case that convex functions are important in studying spaces of negative curvature; see \cite{bo69}. Most of our interior analysis of
the $g$-geodesics leaving $Q$ will follow from the fact that the cotangent function on a cornered AH space has a 
Hessian of a very special
form related to convexity. This Hessian equation actually has a history of its own
in the negatively curved setting and beyond; see for example \cite{cc96,hpw15}.

\begin{lemma}
  \label{cotlem}
  Define $w \in C^{\infty}(\tU \setminus \tM)$ by $w = \cot(\theta)$. Then
  \begin{equation}
    \label{convex}
    \nabla_g^2w = wg + O_g(\rho).
  \end{equation}
\end{lemma}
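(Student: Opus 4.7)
The plan is to decompose $g$ into a warped-product model for which the identity holds exactly, plus a perturbation vanishing on $\tS$. Using the polar form (\ref{polform}), I would write $g = g_0 + E$ with
\begin{equation*}
  g_0 = \csc^2\theta\left[d\theta^2 + \frac{d\rho^2 + k_\rho}{\rho^2}\right], \qquad E = (\rho\sin\theta)\ell,
\end{equation*}
where $\ell$ is a smooth section of $S^2({}^{0e}T^*\tX)$. The essential feature of $g_0$ is that the cross-section metric $h := \rho^{-2}(d\rho^2 + k_\rho)$ does not depend on $\theta$; under the substitution $\tau = \log\tan(\theta/2)$ (so $\csc\theta = \cosh\tau$), the model becomes the standard Fermi-type warped product $d\tau^2 + \cosh^2\tau\cdot h$ and $w = \cot\theta$ becomes $-\sinh\tau$.

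On this model the identity $\nabla^2_{g_0}w = wg_0$ holds exactly: it is the familiar fact that $\sinh$ of the signed distance to a totally geodesic hypersurface satisfies the Hessian equation in a warped product. Alternatively one verifies it by a short polar-coordinate computation, obtaining $\Gamma^0_{00}[g_0] = -\cot\theta$, $\Gamma^0_{0\mu}[g_0] = 0$, and $\Gamma^0_{\mu\nu}[g_0] = \cot\theta\, h_{\mu\nu}$, which combined with $\partial_{00}w = 2\csc^2\theta\cot\theta$ establishes the identity block by block.

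I would then split
\begin{equation*}
  \nabla^2_g w - wg = \bigl(\nabla^2_g w - \nabla^2_{g_0}w\bigr) - wE
\end{equation*}
and estimate each piece. Since $\ell$ is a smooth 0-edge tensor, $|\ell|_g$ is bounded, and $wE = (\rho\cos\theta)\ell$ gives $|wE|_g = O(\rho)$ at once. For the Hessian difference, $\partial_k w = -\csc^2\theta\, \delta_k^0$ yields
\begin{equation*}
  \bigl(\nabla^2_g w - \nabla^2_{g_0}w\bigr)_{ij} = \csc^2\theta\bigl(\Gamma^0_{ij}[g] - \Gamma^0_{ij}[g_0]\bigr),
\end{equation*}
and the change-of-Christoffel formula expresses the right-hand side as $\csc^2\theta\cdot g^{0l}\,(\nabla^{g_0}E)$ (symmetrized). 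Working in the 0-edge coframe (\ref{dualframe}), both $E$ and $\nabla^{g_0}E$ are smooth 0-edge tensors carrying a factor of $\rho\sin\theta$, while the $g^{0l}$ entries from (\ref{ginv}) contribute factors of $\sin^2\theta$ that balance the $\csc^2\theta$ prefactor, leaving an $O_g(\rho)$ bound.

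The main obstacle is the careful bookkeeping of powers of $\sin\theta$ and $\rho$ in this last step. The coordinate components of $E$ and $\nabla^{g_0}E$ individually carry negative powers of $\sin\theta$ and $\rho$ (see the off-diagonal $O(\sin\theta)$ and $O(\rho^{-1}\sin\theta)$ entries in (\ref{g})), and one must verify that these are precisely offset by positive powers from $\csc^2\theta$ and from the inverse metric $g^{0l}$ in (\ref{ginv}); the 0-edge coframe organizes this cancellation cleanly, since in that frame all the relevant tensors have smooth bounded components.
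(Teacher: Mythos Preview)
Your approach is correct and takes a genuinely different route from the paper. The paper simply computes the Christoffel symbols $\Gamma^0_{ij}$ of the full metric $g$ directly from the block forms (\ref{g}) and (\ref{ginv}), and then reads off $\nabla^2 w = \nabla(dw)$ in coordinates; the $O_g(\rho)$ remainder falls out by inspection once the error blocks are written in the 0-edge coframe. Your argument instead isolates the warped-product model $g_0 = d\tau^2 + \cosh^2\tau\, h$ on which the Obata-type equation $\nabla^2_{g_0}w = wg_0$ holds exactly (this is the content of the remark following the lemma, made precise), and then treats $g - g_0$ as a perturbation. The conceptual gain is that you explain \emph{why} the identity should hold rather than merely verifying it; the cost is that your perturbation step hides a lemma of roughly the same weight as the paper's whole computation. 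Specifically, the claim that $\nabla^{g_0}E$ remains a smooth 0-edge tensor with the factor $\rho\sin\theta$ intact needs two ingredients: that the connection coefficients of $g_0$ in the 0-edge frame are smooth up to both $\tS$ and $\tM$ (true, but requiring essentially the same Christoffel computation you sketched, together with the fact that the AH Christoffels of $h$ are $O(\rho^{-1})$), and that each 0-edge vector field sends $\rho\sin\theta$ to $\rho\sin\theta$ times a smooth function. Once these are noted, your estimate $|\nabla^2_g w - \nabla^2_{g_0}w|_g \le \csc\theta\,|e^0(D(\,\cdot\,,\,\cdot\,))| \le C\csc\theta\,|\nabla^{g_0}E|_g = O(\rho)$ goes through cleanly, and the phrasing in terms of the 0-edge coframe is indeed the right way to organize it.
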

\begin{remark}This result is motivated by the fact that, in the case of the hyperbolic upper half-space, the equation holds exactly.\end{remark}
\begin{proof}
We will need the Christoffel symbols $\Gamma_{ij}^0$. Computing from (\ref{g}) and (\ref{ginv}) using
the equation $\Gamma_{ij}^k = \frac{1}{2}g^{kl}(\partial_ig_{lj} + \partial_jg_{li} - \partial_lg_{ij})$,
we find that
\begin{equation*}
  \Gamma^0_{ij} = \left(\begin{array}{ccc}
    -\cot(\theta) + O(\rho) & O(1) & O(1)\\
    O(1) & \rho^{-2}\cot(\theta)k_{st} + O(\rho^{-1}) & O(\rho^{-1})\\
    O(1) & O(\rho^{-1}) & \rho^{-2}\cot(\theta) + O(\rho^{-1})\end{array}\right).
\end{equation*}
Now $dw = -\csc^2(\theta)d\theta$, and we can use these Christoffel computations to find that
\begin{align*}
  \nabla^2w = \nabla(dw) &= \cot(\theta)g + \frac{\left(
  d\theta, dx^s, d\rho\right)}{\sin^2\theta}\left(\begin{array}{ccc}
    O(\rho) & O(1) & O(1)\\
    O(1) & O(\rho^{-1}) & O(\rho^{-1})\\
    O(1) & O(\rho^{-1}) & O(\rho^{-1})\end{array}\right)
    \left(\begin{array}{c}d\theta \\ dx^t \\ d\rho\end{array}\right)\\
      &= \cot(\theta)g\\ 
      &\quad+\left( \frac{d\theta}{\sin\theta}, \frac{dx^s}{\rho\sin\theta}, \frac{d\rho}{\rho\sin\theta} \right)
      \left(\begin{array}{ccc}O(\rho) & O(\rho) & O(\rho)\\
        O(\rho) & O(\rho) & O(\rho)\\
        O(\rho) & O(\rho) & O(\rho)\end{array}\right)
        \left(\begin{array}{c}
          \frac{d\theta}{\sin\theta}\\
          \frac{dx^t}{\rho\sin\theta}\\
          \frac{d\rho}{\rho\sin\theta}\end{array}\right).
\end{align*}
This proves the claim.
\end{proof}

We next need two technical results.

\begin{proposition}
  \label{complem}
  Let $D = J \times \mathbb{R}_{(u,v)}^2 \subseteq \mathbb{R}^3$, where $J$ is an interval containing $[a,b)$.
  Let $f:D \to \mathbb{R}$ be continuous and locally Lipschitz,
  and weakly increasing in $u$. Define an ordinary differential operator
  $L$ by $Lu(x) = u''(x) - f(x,u(x),u'(x))$. Suppose $\theta, \psi:[a,b) \to \mathbb{R}$ are $C^2$ functions
  such that the graphs of $t \mapsto (\theta(t),\theta'(t))$ and $t \mapsto (\psi(t),\psi'(t))$ lie in
  $D$. If
  \begin{itemize}
    \item $\theta(a) \leq \psi(a)$; and
    \item $\theta'(a) \leq \psi'(a)$; and
    \item $L\theta \leq L\psi$ on $[a,b)$,
  \end{itemize}
  then $\theta(t) \leq \psi(t)$ and $\theta'(t) \leq \psi'(t)$ for all $t \in [a,b)$.
\end{proposition}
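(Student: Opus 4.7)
\medskip

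\textbf{Proof proposal for Proposition \ref{complem}.}

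The plan is to reduce to a strict version of the inequality by a perturbation argument, and then prove the strict version by tracking the first time either of the two inequalities fails.

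First, I would fix an arbitrary compact subinterval $[a,c] \subset [a,b)$. Since $\theta$ and $\psi$ are $C^2$, their graphs and the graphs of their derivatives lie in some compact subset $K_0 \subset D$, so I can fix a Lipschitz constant $L$ for $f$ on a slight enlargement of $K_0$. I would then establish the following strict version: if $\theta(a) < \psi(a)$, $\theta'(a) < \psi'(a)$, and $L\theta(t) < L\psi(t)$ on $[a,c]$, then $\theta(t) < \psi(t)$ and $\theta'(t) < \psi'(t)$ for all $t \in [a,c]$. To prove it, let $\delta = \psi - \theta$ and set
\begin{equation*}
T = \sup\{t \in [a,c] : \delta > 0 \text{ and } \delta' > 0 \text{ on } [a,t]\}.
\end{equation*}
By continuity, $T > a$; suppose for contradiction that $T \le c$. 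Since $\delta'(t) > 0$ on $[a,T)$, $\delta$ is strictly increasing on $[a,T)$, so $\delta(T) \ge \delta(a) > 0$, ruling out the possibility $\delta(T) = 0$. Thus the failure at $T$ must be that $\delta'(T) = 0$ with $\delta(T) > 0$. Now compute
\begin{equation*}
\delta''(T) = \bigl(L\psi(T) - L\theta(T)\bigr) + \bigl(f(T,\psi(T),\psi'(T)) - f(T,\theta(T),\theta'(T))\bigr).
\end{equation*}
The first term is strictly positive by hypothesis. For the second, $\psi'(T) = \theta'(T)$ since $\delta'(T) = 0$, and $\psi(T) > \theta(T)$, so the weak monotonicity of $f$ in its second argument makes this term nonnegative. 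Hence $\delta''(T) > 0$, which contradicts the fact that $\delta'(t) > 0$ on $[a,T)$ and $\delta'(T) = 0$ forces $\delta''(T) \le 0$ (from the left difference quotient). This proves the strict version.

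For the general case, I would perturb $\psi$ by setting $\psi_\varepsilon(t) = \psi(t) + \varepsilon e^{\lambda(t-a)}$. Then $\psi_\varepsilon(a) > \psi(a) \ge \theta(a)$ and $\psi_\varepsilon'(a) > \psi'(a) \ge \theta'(a)$ strictly for any $\varepsilon,\lambda > 0$. A direct computation gives
\begin{equation*}
L\psi_\varepsilon(t) - L\psi(t) = \varepsilon\lambda^2 e^{\lambda(t-a)} - \bigl[f(t,\psi_\varepsilon,\psi_\varepsilon') - f(t,\psi,\psi')\bigr] \ge \varepsilon e^{\lambda(t-a)}\bigl[\lambda^2 - L(1 + \lambda)\bigr],
\end{equation*}
using the Lipschitz bound on $f$, and this is strictly positive on $[a,c]$ once $\lambda$ is chosen large enough depending only on $L$. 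Hence $L\theta \le L\psi < L\psi_\varepsilon$ strictly on $[a,c]$, and the strict version applies to the pair $(\theta,\psi_\varepsilon)$ on $[a,c]$. Letting $\varepsilon \to 0^+$, the inequalities $\theta(t) \le \psi_\varepsilon(t)$ and $\theta'(t) \le \psi_\varepsilon'(t)$ pass to $\theta(t) \le \psi(t)$ and $\theta'(t) \le \psi'(t)$ on $[a,c]$. Since $c < b$ was arbitrary, the conclusion holds on all of $[a,b)$.

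The main obstacle is the subtle point at $T$ in the strict version: without the strict inequality $L\theta < L\psi$, one only obtains $\delta''(T) \ge 0$, which is not enough to contradict $\delta''(T) \le 0$; this is precisely why the perturbation step is indispensable, and also why the monotonicity of $f$ in $u$ (rather than in $v$) enters in a nontrivial way — it is needed to handle the term $f(T,\psi,\psi') - f(T,\theta,\theta')$ at the point where the derivatives agree but the values have separated.
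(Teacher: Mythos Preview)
Your proof is correct. The paper, however, does not give a proof at all: it simply cites the result as ``essentially Theorem 11.XVI in \cite{wal98}'' (Walter's \emph{Ordinary Differential Equations}) and moves on. What you have written---first establishing a strict version by tracking the first failure time of $\delta = \psi - \theta$ and $\delta'$, then reducing the weak hypotheses to strict ones via the exponential perturbation $\psi_\varepsilon = \psi + \varepsilon e^{\lambda(t-a)}$ with $\lambda$ chosen large relative to the local Lipschitz constant of $f$---is in fact the standard proof of such differential-inequality comparison theorems, and is essentially the argument one finds in Walter's book. So your route is not really different from the paper's; you have simply supplied the proof the paper outsourced to a reference.

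Two minor remarks on presentation. First, you use the symbol $L$ for both the differential operator and the Lipschitz constant, which invites confusion. Second, in the strict version you write ``suppose for contradiction that $T \le c$,'' but since $T$ is defined as a supremum over $[a,c]$ this inequality always holds; what you mean is that the strict conclusion fails at or before $c$, i.e., that either $T < c$, or $T = c$ with $\delta(c)\delta'(c) = 0$. Your contradiction argument covers both cases, so this is only a phrasing issue.
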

This is essentially Theorem 11.XVI in \cite{wal98}.

\begin{lemma}
  \label{lemcsc}
  (a) Let $0 < \delta < 1$, $a > 0$, and $b \in \mathbb{R}$,
  and set $f(t) = a e^t + b e^{-t} + \delta$. Then there exists a continuous
  function $C = C(a,b,\delta) > 0$
  such that, if $w(t) \geq f(t)$ for all $t \geq 0$, then $1 + w(t)^2 \geq C^{-2}e^{2t}$ for all $t \geq 0$. 

  (b) Let $0 < \delta < 1$, $0 < a_1 < a_2$, and $b_1, b_2 \in \mathbb{R}$. Suppose that
  $a_1e^t + b_1e^{-t} + \delta \leq w(t) \leq a_2e^t + b_2e^{-t} - \delta$. Then
  there exists $D = D(a_1,a_2,b_1,b_2,\delta)$, continuous in its arguments, such that
  $1 + w(t)^2 \leq D^2e^{2t}$ for all $t \geq 0$.
\end{lemma}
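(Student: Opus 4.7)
My plan for both parts is purely calculus: use the fact that $e^t$ dominates $e^{-t}$ for large $t$ to reduce to polynomial estimates, then handle the bounded interval $[0,T_*]$ separately.

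For part (a), the idea is that $f(t) = ae^t + be^{-t} + \delta$ grows like $ae^t$, so eventually $w(t)^2$ dominates $e^{2t}$ from below. Concretely, I would set $T_1 = T_1(a,b) := \max\{0,\log(2|b|/a)\}$, which is continuous in $(a,b)$ with $a > 0$. For $t \geq T_1$, $ae^t \geq 2|b| \geq -2be^{-t}$, so $f(t) \geq \tfrac{a}{2}e^t + \delta \geq \tfrac{a}{2}e^t > 0$; since $w \geq f$, we get $w(t)^2 \geq \tfrac{a^2}{4}e^{2t}$ and hence $1 + w(t)^2 \geq \tfrac{a^2}{4}e^{2t}$. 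For $t \in [0,T_1]$, we simply use $1 + w(t)^2 \geq 1$ and $e^{2t} \leq e^{2T_1}$. Setting $C(a,b,\delta) = \max\{2/a,\,e^{T_1(a,b)}\}$ (with a formal $\delta$-dependence for the statement) finishes the proof, and $C$ is manifestly continuous in its arguments.

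For part (b), I would first observe that from $w \geq a_1e^t + b_1e^{-t} + \delta \geq -|b_1|$ (for $t \geq 0$) and $w \leq a_2e^t + b_2e^{-t} - \delta \leq a_2e^t + |b_2|$, one obtains the uniform bound
\begin{equation*}
  |w(t)| \leq a_2 e^t + |b_1| + |b_2|, \qquad t \geq 0.
\end{equation*}
Since $e^t \geq 1$ on $[0,\infty)$, this gives $|w(t)| \leq (a_2 + |b_1| + |b_2|)e^t$, and therefore
\begin{equation*}
  1 + w(t)^2 \leq \bigl(1 + (a_2 + |b_1| + |b_2|)^2\bigr)e^{2t}.
\end{equation*}
Then $D(a_1,a_2,b_1,b_2,\delta) := \sqrt{1 + (a_2 + |b_1| + |b_2|)^2}$ is continuous in its arguments and has the required property.

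Neither part presents any real obstacle; the only subtlety worth watching is to ensure the constants depend continuously on $(a,b,\delta)$ or $(a_1,a_2,b_1,b_2,\delta)$, which is why I bake $T_1$ into the definition of $C$ rather than choosing it implicitly. The lower bound in (a) is the slightly more delicate one because one has to split into the transient range $[0,T_1]$ and the asymptotic range $[T_1,\infty)$, whereas part (b) goes through by a single crude estimate.
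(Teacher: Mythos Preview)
Your proposal is correct and follows essentially the same approach as the paper: for (a), split into a transient interval $[0,T]$ (where $1+w^2\geq 1$ suffices) and an asymptotic range $t\geq T$ (where $f(t)\geq \tfrac{a}{2}e^t$), then take $C=\max\{2/a,\,e^T\}$; the paper's proof is the same but leaves $T$ implicit, whereas you give the explicit formula $T_1=\max\{0,\log(2|b|/a)\}$, which makes the continuity of $C$ transparent. For (b) the paper only says ``similar,'' and your single crude bound $|w(t)|\leq (a_2+|b_1|+|b_2|)e^t$ is a perfectly valid (and in fact cleaner) way to finish.
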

\begin{proof}
  (a) There exists $T > 0$ such that, for all $t \geq T$, we have $f(t) > \frac{1}{2}a e^t$.
  Let $\frac{2}{a} < C$ be such that $C^{-2}e^{2T} \leq 1$. The result follows immediately.

  The proof of (b) is similar.
\end{proof}

We also recall the following result from \cite{maz86}.

\begin{proposition}[Propositions 1.8 and 1.9 in \cite{maz86}]
  \label{mazprop}
  Let $(X,M,g)$ be an asymptotically hyperbolic manifold, and $\varphi$ a defining function for $M$. There exists $\varphi_0 > 0$ such that,
  if $p \in X$ with $0 < \varphi(p) < \varphi_0$, and if $\gamma:[0,\infty) \to
    \mathring{X}$ is a geodesic ray with $\gamma(0) = p$ and
  $(\varphi\circ\gamma)'(0) < 0$, then $\gamma$ asymptotically approaches a well-defined point of $M$, normally with
  respect to $\varphi^2g$.
\end{proposition}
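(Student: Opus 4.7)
The plan is to reduce the statement to an ODE analysis by working in a geodesic normal form for $g$ near $M$. By the Graham--Lee theorem, for any choice of conformal representative $h$ on $M$ there exist $r_0 > 0$ and a diffeomorphism from $[0,r_0)_r \times M$ onto a neighborhood $U$ of $M$ in which $g = r^{-2}(dr^2 + h_r)$ with $h_0 = h$. Two defining functions differ by a smooth positive factor, so, after further shrinking $\varphi_0$, it suffices to treat the case $\varphi = r$. Writing $\gamma(t) = (r(t), x(t))$ in these coordinates and parametrizing by $g$-arclength, the unit-speed condition reads $(r')^2 + |x'|_{h_r}^2 = r^2$.

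First I would derive an approximate ODE for $r(t)$. Computing the $\Gamma^0_{ij}$ directly from the normal form and substituting the unit-speed relation into the $r$-component of the geodesic equation yields
\begin{equation*}
  r'' = r + r\cdot E(t),
\end{equation*}
where $E$ depends on $\partial_r h_r$ and on the ratios $r'/r$, $|x'|_{h_r}/r$, and $|E(t)| \to 0$ as $r(t) \to 0$. This is a scalar version of the Hessian identity $\nabla^2 w = wg + O_g(\rho)$ appearing in Lemma \ref{cotlem}; here the natural test function is $w = r$ and the ``error'' is controlled by the deviation of $h_r$ from $h_0$.

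Next I would run a barrier/continuation argument in the spirit of Proposition \ref{complem}, comparing $r$ with solutions of the limit equation $u'' = u$, whose solutions are $A e^t + B e^{-t}$. Let $T$ be the supremum of times on which $r(s) < 2r(0)$. On $[0,T)$ the error $|E|$ is uniformly small, so comparison gives an upper barrier of the form $A e^t + B e^{-t}$ with $A$ small and $B \approx r(0)/2$; choosing $\varphi_0$ small enough forces this barrier to remain below $2r(0)$ for all $t \geq 0$, hence $T = \infty$ and $r(t) \leq C e^{-t}$. The unit-speed identity then gives $|x'(t)|_{h_r} \leq r(t) \leq Ce^{-t}$, which is integrable, so $x(t)$ converges to some $x_\infty \in M$. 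Sharpening the comparison to a two-sided bound $r(t) = Be^{-t}(1 + o(1))$ with $B > 0$ yields $r'/r \to -1$; combined with unit speed this forces $|x'|_{h_r}/r \to 0$. Since the compactified metric near $M$ is $\bar g = \varphi^2 g = dr^2 + h_r$, the rescaled tangent $\gamma'/|\gamma'|_{\bar g}$ converges to $-\partial_r$, which is $\bar g$-normal to $M$.

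The main obstacle is the circular dependence in the error control: to conclude that $E$ is small one needs a priori that $r$ stays small and monotonically decreasing, but that is exactly the conclusion of the comparison. The barrier/continuation device breaks this circle, using smallness of $r(0)$ and the sign of $r'(0)$ to guarantee that the trajectory never escapes the regime where the linearization is dominant. A secondary technical point is verifying that the approximation of $\Gamma^0_{ij}$ by its hyperbolic model is uniform in $x$ on the relevant coordinate patches, which follows from compactness of $M$ and smoothness of $h_r$ in $r$.
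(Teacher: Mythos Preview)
The paper does not prove this proposition; it is cited from \cite{maz86} and used as a black box in Proposition~\ref{geodprop}. Mazzeo's actual argument, which the paper reproduces in the cornered setting in Section~\ref{exponential}, works on the cotangent bundle with rescaled momenta $\bar{\xi}_0 = r\xi_0 = r'/r$ rather than with a Hessian identity for $r$.

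Your step 2 is where the gap lies. In the normal form $g = r^{-2}(dr^2 + h_r)$ one has $\Gamma^0_{\mu\nu} = r^{-1}(h_r)_{\mu\nu} - \tfrac{1}{2}\partial_r(h_r)_{\mu\nu}$, so after using unit speed the $r$-equation is
\[
  r'' \;=\; r \;-\; 2r^{-1}|x'|^2_{h_r} \;+\; O(r^2),
\]
and the middle term lies in $[-2r,0]$ but is not $o(r)$; already in exact hyperbolic space one gets $r'' = -r + 2(r')^2/r$, not $r'' = r$. Equivalently, $r$ does \emph{not} satisfy $\nabla^2 r = rg + (\text{small})$: the tangential block of $\nabla^2 r$ has the wrong sign. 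Even granting the one-sided inequality $r'' \leq (1+\delta)r$ that your computation does yield, the barrier $Ae^t + Be^{-t}$ has $A = \tfrac{1}{2}(r(0)+r'(0))$, which for a geodesic with $r'(0)$ merely negative but close to zero is comparable to $r(0)$, so the barrier blows up and the continuation cannot close. The function that does satisfy the Hessian identity to leading order is $w = 1/r$ (one checks $\nabla^2(1/r) = (1/r)g$ exactly in hyperbolic space, and $w'' = w + O(1)$ in general), or equivalently one should study the Riccati equation $v' = v^2 - 1 + O(r)$ for $v = r'/r$, which has $-1$ as an attracting fixed point on $[-1,0)$. Either repairs the argument, and the second is exactly Mazzeo's rescaled covector $\bar{\xi}_0$.
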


In the introduction, we discussed a subset $(X,M,Q)$ of hyperbolic space 
$\mathbb{H}^{n + 1}$ as an example of a CAH manifold,
with $Q$ a Euclidean plane.
Recall that geodesics leaving
$Q$ normally are semi-circles that approach the infinite boundary $\mathbb{R}^n$ orthogonally. The following theorem, which is the 
main result of this section, shows that this behavior is approximated by the geodesics in a general cornered AH space.

First some notation. We 
let $d_S$ be the distance function on $S$ with respect
to $k_0$, where $k_0$ is as in (\ref{g}), and $\pi_S:\tU \to S$ be projection onto the $S$ factor in the polar 
$g$-identification.

\begin{proposition}
  \label{geodprop}
  Let $(\tX,\tM,\tQ,\tS)$ be the blowup of the cornered space $(X, M, Q)$, and $g$ an admissible metric. There exists $a > 0$ such that for each $q \in \tQ_a$,
  \begin{itemize}
    \item $\gamma_q$ exists for all $t \geq 0$ and $\gamma_q(t) \in \mathring{\tX}$ for all $t > 0$;
    \item the limit $\lim_{t \to \infty}\gamma_q(t)$ exists and lies in $\tM \setminus \tS$, and $\gamma_q$ approaches $\tM$
      $\bar{g}$-normally; and
    \item for all $t \geq 0$, $(\theta \circ \gamma_q)'(t) < 0$.
  \end{itemize}
  Moreover, there exist $\varepsilon > 0$ and $A_1, A_2, C > 0$ such that, for all $q \in \tQ_a$ and all $t \in [0,\infty)$,
  \begin{enumerate}
    \item\label{rhocond} $\varepsilon \rho(q) < \rho(\gamma_q(t)) < C\rho(q)$;
    \item $d_S(\pi_S(q),\pi_S(\gamma_q(t))) < C\rho(q)$; and\label{Scond}
    \item $A_1e^{-t} < \sin(\theta(\gamma(t))) < A_2e^{-t}$.\label{thetacond}
  \end{enumerate}
\end{proposition}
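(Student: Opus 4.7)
The plan is to reduce the geodesic behavior to a scalar ODE by exploiting Lemma \ref{cotlem}. Define $W(t) := \cot(\theta(\gamma_q(t)))$. Since $\gamma_q$ is a unit-speed geodesic and $\nabla^2\cot\theta = \cot\theta \cdot g + O_g(\rho)$, one obtains
\begin{equation*}
  W''(t) = W(t) + E(t), \qquad |E(t)| \leq C\rho(\gamma_q(t)).
\end{equation*}
The initial data follow from Lemma \ref{normlem}: $W(0) = \cot\theta(q)$ and $W'(0) = \csc\theta(q) + O(\rho(q))$. For $q$ near $\tS$, the angle $\theta(q)$ lies in a compact subset of $(0,\pi)$, so these are uniformly controlled. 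The algebraic fact that drives everything is that $\csc\theta \pm \cot\theta = (1 \pm \cos\theta)/\sin\theta > 0$ on $(0,\pi)$, which will give positive exponential growth coefficients in the comparison.

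The heart of the argument is a bootstrap. Let $T^* = \sup\{T \geq 0 : \rho(\gamma_q(t)) \leq 2\rho(q) \text{ on } [0,T]\}$. On $[0,T^*]$ the error satisfies $|E(t)| \leq 2C\rho(q)$. Applying Proposition \ref{complem} with $Lu = u'' - u$ against comparison functions $\underline{W}(t) = a_1 e^t + b_1 e^{-t} + \delta$ and $\overline{W}(t) = a_2 e^t + b_2 e^{-t} - \delta$, with $\delta = 2C\rho(q)$ and constants $a_i, b_i$ chosen to straddle the initial data, one derives
\begin{equation*}
  a_1 e^t + b_1 e^{-t} + \delta \leq W(t) \leq a_2 e^t + b_2 e^{-t} - \delta
\end{equation*}
with $0 < a_1 < a_2$ (uniformly bounded away from $0$ by half of $\csc\theta(q) + \cot\theta(q)$) provided $\rho(q) < a$ is sufficiently small. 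Lemma \ref{lemcsc} then produces constants $A_1, A_2 > 0$ with $A_1 e^{-t} \leq \sin\theta(\gamma_q(t)) \leq A_2 e^{-t}$, which is item \eqref{thetacond}.

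To close the bootstrap and prove items \eqref{rhocond} and \eqref{Scond}, I read off from the block expression (\ref{g}) for $g$ that $|d\rho|_g \leq C\rho\sin\theta$ and $|dx^s|_g \leq C\rho\sin\theta$ for local coordinates on the $S$ factor. Since $\gamma_q$ is unit speed, $|\dot\rho(t)| \leq C\rho(t)\sin\theta(t) \leq C A_2 \rho(t) e^{-t}$, so integrating $|\frac{d}{dt}\log\rho| \leq C A_2 e^{-t}$ yields $\rho(t) \in [\varepsilon\rho(q),\, C\rho(q)]$ with constants independent of $q$. For $a$ small enough we have $C < 2$, so $T^* = \infty$; a parallel integration of $|\dot x^s| \leq C\rho(q) e^{-t}$ gives $d_S(\pi_S(q),\pi_S(\gamma_q(t))) \leq C\rho(q)$, completing the quantitative bounds.

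For the qualitative bullets: monotonicity $(\theta\circ\gamma_q)' < 0$ is equivalent to $W'(t) > 0$, and writing $u_\pm(t) = W'(t) \pm W(t)$ the unperturbed system decouples as $u_+' = u_+$, $u_-' = -u_-$ with $u_\pm(0) > 0$ by the half-angle identities; the perturbed equations show $u_+(t) > 0$ for all $t$ and dominates $|u_-|$ for large $t$, so $W' = \tfrac{1}{2}(u_+ + u_-) > 0$ throughout. Non-return to $\tQ$ follows since $\tQ$ is locally $\{\theta = \psi(x,\rho)\}$, while $\psi$ varies by only $O(\rho(q))$ along $\gamma_q$ and $\theta$ strictly decreases; non-intersection with $\tS$ follows from $\rho > 0$. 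Finally, since $\mathcal L$ vanishes on $\tM$, near $\tM\setminus\tS$ the metric $g$ is essentially $b^*g_+$, so embedding a terminal piece of $\gamma_q$ into the doubled AH extension $(\bX,\bg_+)$ and applying Proposition \ref{mazprop} (which is available once $r = \rho\sin\theta$ is small, guaranteed by item \eqref{thetacond}) yields both the existence of the limit in $\tM\setminus\tS$ and $\bar g$-normal approach. The principal obstacle is the bootstrap: it requires simultaneous control of $\rho$ and $\sin\theta$, and closing it cleanly means checking that the constants $a_1, a_2$ stay strictly positive and separated uniformly as $\rho(q) \downarrow 0$, which is what the specific algebra of $\csc\theta \pm \cot\theta$ arranges.
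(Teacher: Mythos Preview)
Your approach is essentially the paper's: reduce to $W'' = W + O(\rho)$ via Lemma~\ref{cotlem}, bracket $W$ and $W'$ between solutions of $W'' = W \pm \delta$ via Proposition~\ref{complem}, close a bootstrap on $\rho$ using $|d\rho|_g = O(\rho\sin\theta)$, read off the $\sin\theta$ decay from Lemma~\ref{lemcsc}, and finish with Proposition~\ref{mazprop}. Your $u_\pm$ diagonalization is a repackaging of the derivative bound that Proposition~\ref{complem} already delivers.

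The one genuine gap is the non-return step. The assertion ``$\psi$ varies by only $O(\rho(q))$ along $\gamma_q$ and $\theta$ strictly decreases'' does not by itself exclude a return: it only forces $\theta(q) - \theta(t_1) \leq C\rho(q)$ at a putative return time $t_1$, which is no contradiction for small $t_1$. What is missing is a \emph{rate} comparison. Your own bounds already give $|\dot\theta|/\sin\theta \geq c > 0$ uniformly (combine the lower bound on $W'$ of order $e^t$ with $\sin\theta \geq A_1 e^{-t}$), while $\bigl|\tfrac{d}{dt}\psi(x(t),\rho(t))\bigr| \leq C'\rho(q)\sin\theta$ since $|\dot x^s|,|\dot\rho| = O(\rho\sin\theta)$ and $\psi$ is smooth; hence for $a$ small the function $\theta(t) - \psi(x(t),\rho(t))$ is strictly decreasing and never returns to zero. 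The paper instead assumes a return at $q'$ and derives the contradiction $\langle\dot\gamma_q(t_1),\nu_{q'}\rangle > 0$ directly from the same estimates, which is more laborious; your route is cleaner once the rate bound is made explicit. Two minor points: Proposition~\ref{mazprop} requires $(r\circ\gamma_q)' < 0$, not merely $r$ small, so you should note that $\dot r = \dot\rho\sin\theta + \rho\cos\theta\,\dot\theta < 0$ eventually; and it should be applied to $g$ itself (which is AH near $\tM\setminus\tS$ because $\mathcal{L}$ vanishes on $\tM$), not to the extension $\bg_+$, since the geodesics of $g$ and $\bg_+$ differ where $\mathcal{L}\neq 0$.
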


Note that we will improve this result in Proposition \ref{extendprop}; in particular, it will imply that
in \ref{Scond} we could write $\rho(q)^2$ instead of $\rho(q)$, and that in \ref{rhocond} we can take
$\frac{\varepsilon}{C} \to 1$ as $a \to 0$.

\begin{proof}
  By Lemma \ref{normlem}, $\nu = -\sin(\theta)\frac{\partial}{\partial \theta} + O_g(\rho)$. 
  Set $\nu_q^{\theta} = d\theta(\nu_q)$. Thus, for $q \in \tQ \setminus \tS$ sufficiently near
  $\tS$, we have
  $-\csc^2(\theta(q))\nu^{\theta}_q = \csc(\theta(q)) + O(\rho(q))$, uniformly in $q$. 
  Now for $\theta \in (0,\pi)$, we always have $\csc\theta > |\cot\theta|$.
  Let $0 < \delta < 1$ be such that there exists $\rho_0 > 0$ so that $\tX_{\rho_0} \subset \tU$ and
  such that
  \[\alpha := \inf_{q \in \tQ_{\rho_0}}(-\csc^2(\theta(q))\nu_q^{\theta} - |\cot(\theta(q))| - \delta) > 0.\]
  Such a $\delta$ exists because $\theta$ is bounded away from $0$ and $\pi$ on $\tQ$.
  Now let
  \begin{equation*}
    \begin{split}
    \beta :=& \sup_{q \in \tQ_{\rho_0}}\left(\max\left\{ -\csc^2(\theta(q))\nu_q^{\theta} + |\cot(\theta(q))| + \delta,\right.\right.\\
  &\qquad\left.\left.|\csc^2(\theta(q))\nu_q^{\theta} + \cot(\theta(q)) + \delta|\right\}\right) > 0,
\end{split}
\end{equation*}
  which is finite because the cosecant and cotangent functions are bounded on $\tQ$ near $\tS$.
  Next, let $B > 0$ be large enough that, for all $Y \in T(\tX_{\rho_0})$ with $|Y|_g = 1$, we have
  $|d\rho(Y)| < B\rho\sin\theta$; such $B$ exists by (\ref{g}).

  For $q \in \tQ_{\rho_0}$, define $\theta_q(t) = \theta(\gamma_q(t))$, and $\rho_q(t) = \rho(\gamma_q(t))$.
  Next, define $w_q(t) = \cot(\theta_q(t))$; thus $w_q$ is defined on the same domain as
  the geodesic $\gamma_q$. Noting that $w_q(0) = \cot(\theta(q))$ and $\dot{w}_q(0) = -\csc^2(\theta)\nu_q^{\theta}$, let $C$ be as defined
  in Lemma \ref{lemcsc} and set
  \begin{equation*}
    A := \sup_{q \in \tQ_{\rho_0}} C\left( \frac{1}{2}(w_q(0) + \dot{w}_q(0) - \delta),
    \frac{1}{2}(w_q(0) - \dot{w}_q(0) - \delta),\delta\right) > 0,
  \end{equation*}
  which is finite. Also let $D$ be as in Lemma \ref{lemcsc} and set
  \begin{equation*}
    \begin{aligned}
      E &:= \sup_{q \in \tQ_{\rho_0}}D\left( \frac{1}{2}(w_q(0) + \dot{w}_q(0) - \delta),\frac{1}{2}(w_q(0) + \dot{w}_q(0) + \delta),\right.\\
    &\qquad\frac{1}{2}(w_q(0) - \dot{w}_q(0) - \delta),
    \left.\frac{1}{2}(w_q(0) - \dot{w}_q(0) + \delta),\delta\right) > 0,
  \end{aligned}
  \end{equation*}
  which is likewise finite.
  By shrinking it if necessary, we may assume that $\rho_0$ is small enough that, if $q \in \tQ_{\rho_0}$, then
  \begin{equation}
    \label{nubd}
    \left|\nu_q + \sin(\theta)\frac{\partial}{\partial \theta}\right|_g < \frac{\alpha E^{-1}}{8}.
  \end{equation}
  We may similarly suppose, by (\ref{g}), that on $\tQ_{\rho_0}$,
  \begin{equation}
    \label{g00bd}
    \sin(\theta)|g_{00} - \csc^2(\theta)| < \min\left\{\frac{\alpha E^{-1}}{8A\beta},1\right\},
  \end{equation}
  and that if $Y \in T\tX|_{\tQ_{\rho_0}}$ with $d\theta(Y) = 0$, then
  \begin{equation}
    \label{g0mubd}
    \left|\left\langle\sin\theta\frac{\partial}{\partial \theta},Y\right\rangle\right| \leq \frac{\alpha E^{-1}}{8(1 + \sqrt{2}A\beta)}|Y|_g.
  \end{equation}

  Now because $\gamma_q$ is a geodesic, $\frac{d^2}{dt^2}w(\gamma_q(t)) = (\nabla_g^2w)(\dot{\gamma}_q,\dot{\gamma}_q)$.
  It follows by Lemma \ref{cotlem} that
  \begin{equation*}
    \ddot{w}_q = w_q + O(\rho_q(t)).
  \end{equation*}
  By shrinking $\rho_0$ if necessary, we assume that the $O(\rho)$ term in this equation is bounded by $\delta$ for
  $\rho \leq \rho_0$. Now let $0 < a < \frac{1}{2e^{AB}}\rho_0$. We henceforth assume $q \in \tQ_{a}$.

  Now let $f_{\pm}$ be the solutions to $\ddot{f}_{\pm} = f_{\pm} \pm \delta$, with $f_{\pm}(0) = w_q(0)$ and
  $\dot{f}_{\pm}(0) = \dot{w}_q(0)$. Then
  \begin{equation*}
    f_{\pm}(t) = \frac{1}{2}\left( w_q(0) + \dot{w}_q(0) \pm \delta \right)e^{t} + \frac{1}{2}
    \left( w_q(0) - \dot{w}_q(0) \pm \delta \right)e^{-t} \mp \delta.
  \end{equation*}
  The leading coefficient is always positive, by our choice of $\delta$. Moreover, we have
  \begin{equation*}
    \ddot{f}_- - f_- = -\delta \leq \ddot{w}_q - w_q \leq \delta = \ddot{f}_+ - f_+,
  \end{equation*}
  so by Proposition \ref{complem}, we have
  \begin{equation}
    \label{fineq}
    f_-(t) \leq w_q(t) \leq f_+(t)
  \end{equation}
  for all $t \geq 0$ such that $\rho_q(t) < \rho_0$ up to $t$, and so long as the geodesic continues to exist. Also by the same proposition,
  \begin{equation}
    \label{dotineq}
    \dot{f}_-(t) \leq \dot{w}_q(t) \leq \dot{f}_+(t),
  \end{equation}
  subject to the same constraints. Since both bounding functions are positive, we conclude that $\dot{w}_q(t) > 0$
  for all $q$, and for all $t \geq 0$ such that $\rho_q(t) < \rho_0$. This implies that $\dot{\theta} < 0$ for all such $t \geq 0$.
  In addition, the coefficients
  appearing in $f_{\pm}$ are uniformly bounded in $q$. It follows that we have shown that
  $\theta_q(t)$ goes to zero and $\cot(\theta_q(t))$ goes to infinity exponentially, \emph{so long as}
  $\rho_q(t)$ remains bounded by $\rho_0$ and $\gamma_q$ exists.

  Now by Lemma \ref{lemcsc} and the definition of $f_-$, we have
  \begin{equation*}
    \csc^2(\theta_q(t)) = 1 + w_q(t)^2 \geq A^{-2}e^{2t}.
  \end{equation*}
  (This and the following continue, for now, to depend on the assumption that $\rho$ remains bounded by $\rho_0$.) 
  Hence, also,
  \begin{equation}
    \label{sinleq}
    \sin(\theta_q(t)) \leq Ae^{-t}.
  \end{equation}
  Also by (\ref{fineq}), by definition of $E$, and by Lemma \ref{lemcsc}, we have
  \begin{equation}
    \label{singeq}
    \sin(\theta_q(t)) \geq E^{-1}e^{-t}.
  \end{equation}

  It now follows from (\ref{sinleq}) that $\left|\frac{\dot{\rho}_q}{\rho_q}\right| < ABe^{-t}$, by definition of $B$. Hence, at least as long as
  $\rho \leq \rho_0$, we find by integrating that
  \begin{equation}
    \label{rhoineq}
    e^{-AB}\rho_q(0) < \rho_q(t) < e^{AB}\rho_q(0).
  \end{equation}
  But then, since $\rho_q(0) \leq a \leq \frac{\rho_0}{2e^{AB}}$, $\rho_q(t)$ must remain bounded by $\frac{\rho_0}{2}$; so a brief contradiction
  argument shows that, indeed, (\ref{fineq}) -- (\ref{rhoineq}) hold for all time $t \geq 0$ such that $\gamma_q$ exists. 
  We have also shown that
  $\gamma_q$ remains bounded away from $\tS$, i.e., $\rho$ is bounded away from $0$. This shows that $\gamma_q$ never reaches $\tS$, and also, with
  (\ref{rhoineq}), yields \ref{rhocond}. Also, by (\ref{sinleq}) and (\ref{singeq}), we have (c).

  We next analyze the motion in the tangential directions along $S$. It follows from the definition and smoothness
  of $k_{\rho}$ in (\ref{g}) that there exists $K > 0$ such that,
  for unit-length 
  $Y \in T\tX_{\rho_0}$, we have
  $|d\pi_S(Y)|_{k_{0}} < K\rho\sin(\theta)$. It then follows, using (\ref{sinleq}) and (\ref{rhoineq}), that
  \begin{equation*}
    \int_0^{\infty} |d\pi_S(\dot{\gamma}(t))|_{k_0} dt < AKe^{AB}\rho(q),
  \end{equation*}
  which yields \ref{Scond}.
  
  Since $\cot(\theta)$ eventually becomes positive with $\dot{\theta}$ negative, we conclude that $\dot{r}$ is ultimately negative, and so the fact that
  $\gamma_q$ approaches a defined point of $\tM$ normally, if it exists for all time, 
  follows by the analysis of geodesics in the standard AH case, Proposition \ref{mazprop}. 
  Thus, we have established all desired behavior,
  except that the geodesic $\gamma_q$ might leave $\mathring{\tX}$ and return to $\tQ$, ceasing to exist. Since the geodesic is unit speed,
  it either exists for all time or returns to $\tQ$ in finite time, and so we have only to show that the latter does not happen.

  Suppose by way of contradiction that $\gamma_q$ does return to $\tQ$, say at $q'$ and at time $t_1 > 0$. Then $\rho(q') < \rho_0$, and we have
  $\langle \dot{\gamma}_q(t_1),\nu_{q'}\rangle \leq 0$. Now by (\ref{dotineq}), the definitions of $f_-$ and $\alpha$, and (\ref{singeq}), we deduce that
  \begin{equation}
    \label{tdotlow}
    \frac{\alpha E^{-1}}{2} \leq \frac{|\dot{\theta}_q(t)|}{\sin(\theta_q(t))}
  \end{equation}
  for all times $t \geq 0$, and in particular
  $t_1$. Similarly, by (\ref{dotineq}), (\ref{sinleq}), the definition of $f_+$, and the definition of $\beta$, we get
  \begin{equation}
    \label{tdothigh}
    \frac{|\dot{\theta}_q(t)|}{\sin(\theta_q(t))} \leq A\beta.
  \end{equation}
  By (\ref{tdothigh}) and (\ref{g00bd}), and because $|\sin\theta| \leq 1$, we find that at $t_1$,
  \begin{align*}
    \left\langle\dot{\theta}\frac{\partial}{\partial \theta},\dot{\theta}\frac{\partial}{\partial \theta}\right\rangle &=
    \frac{\dot{\theta}^2}{\sin^2\theta} + (g_{00} - \csc^2(\theta))\dot{\theta}^2\\
    &\leq A^2\beta^2 + A^2\beta^2 = 2A^2\beta^2.
    \intertext{Thus,}
    \left|\dot{\theta}\frac{\partial}{\partial \theta}\right|_g &\leq \sqrt{2}A\beta.\numberthis\label{thetaveclen}
  \end{align*}
  
  By (\ref{nubd})-(\ref{g0mubd}), (\ref{tdotlow})-(\ref{thetaveclen}), Cauchy-Schwartz, and
  the triangle inequality, we find that
  \begin{align*}
    \langle \dot{\gamma}_q(t_1),\nu_{q'}\rangle &= \left\langle \dot{\gamma}_q(t_1),-\sin(\theta)\frac{\partial}{\partial \theta}\right\rangle
    + \left\langle \dot{\gamma}_q(t_1),\nu_{q'} + \sin(\theta)\frac{\partial}{\partial \theta}\right\rangle\\
    &= \frac{|\dot{\theta}_q(t_1)|}{\sin(\theta)} - (g_{00} - \csc^2(\theta))\sin(\theta)\dot{\theta}_q(t_1)\\
    &\qquad+ 
    \left\langle \dot{\gamma}_q(t_1) - 
    \dot{\theta}_q(t_1)\frac{\partial}{\partial \theta},
    -\sin(\theta)\frac{\partial}{\partial \theta}\right\rangle
    + \left\langle\dot{\gamma}_q(t_1),\nu_{q'} + \sin(\theta)\frac{\partial}{\partial \theta}\right\rangle\\
    &\geq \frac{\alpha E^{-1}}{2} - \frac{\alpha E^{-1}}{8A\beta}(A\beta) - \frac{\alpha E^{-1}}{8(1 + \sqrt{2}A\beta)}
    \left( 1 + \left|\dot{\theta}\frac{\partial}{\partial\theta}\right|_g \right)\\
    &\qquad- |\dot{\gamma}_q|_g\cdot \left|\nu_q + \sin(\theta)\frac{\partial}{\partial\theta}\right|_g\\
    &\geq \frac{\alpha E^{-1}}{8} > 0,
  \end{align*}
  which is a contradiction. Hence, as desired, $\gamma_q$ does not return to $\tQ$.
\end{proof}

\section{The Exponential Map}\label{exponential}
We continue our analysis of the geodesics leaving $\tQ$ normally now by turning our attention to the mapping properties of the exponential map
on the normal bundle to $\tQ$. We ultimately must prove that this map is a diffeomorphism on a suitable space. For now, we content ourselves
with more local properties.

Let $N_+(\tQ \setminus \tS)$ be the inward-pointing half-closed normal 
\emph{ray} bundle to $\tQ \setminus \tS$, so that $N_+(\tQ \setminus \tS) \approx
[0,\infty)_t \times (\tQ \setminus \tS)$ by the identification $t\nu_q \mapsto (t,q)$; and similarly for the normal bundle over subsets of
$\tQ \setminus \tS$. We denote the normal exponential map by $\exp$. 
We have shown in Proposition \ref{geodprop} that there is some $a > 0$ such that $\exp$ is defined on the entirety of
$N_+\tQ_a$, and takes its values in $\tX \setminus (\tM \cup \tS)$. Trivially, $\left\{ 0 \right\} \times
(\tQ \setminus \tS)$ is mapped by $\exp$ to $\tQ \setminus \tS$ as the identity,
and Proposition \ref{geodprop} also shows that $\exp|_{N_+\tQ_a}^{-1}(\tQ_a) = \left\{ 0 \right\} \times 
\tQ_a$ as well. In order to show that
the exponential map induces a diffeomorphism with a neighborhood of $\tS$, we will have to analyze it as
$q \to \tS$ and as $t \to \infty$. 
We thus introduce a partial compactification of the normal bundle that includes faces corresponding to $t = \infty$ and to
$[0,\infty] \times (\tQ \cap \tS)$, and we will show that the exponential map is defined and a local diffeomorphism on the entire space.

Let $V$ be a neighborhood of $\tQ \cap \tS$ in $\tQ$. Then as observed previously,
$N_+(V\setminus\tS)$ has a natural identification, induced by $\nu$, with $[0,\infty) \times (V \setminus \tS)$. Letting $t$ be the coordinate on
the first factor, we set $\tau = 1 - e^{-t}$, and hence obtain an identification with $[0,1) \times (V \setminus \tS)$. We thus define the compactification
$\cbundle{N_+(V\setminus \tS)} = [0,1] \times V$, and we regard $N_+(V\setminus\tS) \subset 
\cbundle{N_+(V \setminus \tS)}$ 
as a subspace via the identification just described. Note that we have added two new faces in this compactification:
one corresponding to $t = \infty$, and one corresponding to $[0,1] \times (\tQ\cap\tS)$. We will consistently let
$\tau$ be the coordinate on the first factor of $\cbundle{N_+(V \setminus \tS)}$. The space 
$\cbundle{N_+(V\setminus \tS)}$ has a natural smooth structure as a manifold with
corners, and $T\cbundle{N_+(V \setminus S)} \cong T[0,1] \oplus TV$ canonically. 
We note that $\cbundle{N_+(V\setminus \tS)}$ is not quite a compactification, since the interior boundary
of $V$ in $\tQ$ is still not included.

With the compactification of the normal bundle in hand, we are ready to extend the exponential map to reach the boundary. In proving
the following, we follow the approach in \cite{maz86}. For the statement, notice that
$\theta \mapsto v(\theta) := \csc\theta - \cot\theta$ is a diffeomorphism of $(0,\pi)$ with $(0,\infty)$.

\begin{proposition}
  \label{extendprop}
  There exists $\rho_0 > 0$ such that the exponential map $\exp:N_+\tQ_{\rho_0} \to \mathring{\tX}$ extends smoothly to
  a map $\exp:\cbundle{N_+\tQ_{\rho_0}} \to \tX$, and the extended map is a local diffeomorphism of manifolds with
  corners. For $q \in \tQ \cap \tS$, $\exp$ maps $[0,1] \times \left\{ q \right\}$ to the $b$-fiber of
  $\tS$ containing $q$.
  For such $q$, $\exp$ satisfies
  \begin{equation}
    \label{ssoln}
    v(\Theta(\exp(\tau,q))) = v(\Theta(q))(1 - \tau);
  \end{equation}
  that is, in the $v$ coordinate, $\exp$ is a linear function of $\tau$.

  Moreover, for $1 \leq \mu \leq n$ and $q \in \tQ$ and for any $\tau$, the equation
  \begin{equation}
    \label{quadest}
    x^{\mu}(\exp(\tau,q)) = x^{\mu}(q) + O(\rho(q)^2)
  \end{equation}
  holds uniformly in $\tau \in [0,1]$.

  Finally, there exists $c > 0$ such that, if $Y \in T_q\tQ_{\rho_0} \subset T[0,\infty) \oplus T\tQ_{\rho_0}
  \cong TN_+\tQ_{\rho_0}$ and $t\nu_q \in N_+\tQ_{\rho_0}$, then
  \begin{equation}
    \label{ceq}
    |d\exp_{t\nu_q}(Y)|_{\bar{g}} \geq c|Y|_{\bar{g}}.
  \end{equation}
\end{proposition}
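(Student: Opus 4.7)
Plan: The strategy is to desingularize the geodesic flow following the method of \cite{maz86}. The coordinate change is motivated by the following observation: on the model metric obtained from (\ref{polform}) by dropping the $(\rho\sin\theta)\ell$ perturbation and restricting to $\{\rho = 0\}$, the metric on each $b$-fiber of $\tS$ is simply $\csc^2\theta\,d\theta^2$, whose unit-speed normal geodesic from $\theta = \Theta(q)$ toward $\theta = 0$ is, in the coordinates $v = \csc\theta - \cot\theta$ and $\tau = 1 - e^{-t}$, exactly the linear curve $v(\tau) = v(\Theta(q))(1-\tau)$. The proposition then amounts to showing that the general admissible flow is a smooth perturbation of this model. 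Accordingly, the first step is to recast the geodesic equation as a first-order system in the variables $(v, x^s, \rho, \xi^0, \xi^s, \xi^n)$, where the $\xi^i$ are the components of $\dot\gamma$ in the dual $0$-edge coframe (\ref{dualframe}), namely $\xi^0 = v^{-1}\dot v = (\sin\theta)^{-1}\dot\theta$, $\xi^s = (\rho\sin\theta)^{-1}\dot x^s$, $\xi^n = (\rho\sin\theta)^{-1}\dot\rho$, with time reparametrised by $\tau$ so that $d/d\tau = e^t\,d/dt$. Using the block forms (\ref{g})-(\ref{ginv}), every $\csc\theta$- and $\rho^{-1}$-factor in the Christoffel symbols of $g$ is absorbed by this rescaling, while the admissible perturbation $(\rho\sin\theta)\ell$ contributes only terms of extra order $(1-\tau)\rho$ times smooth coefficients; the resulting autonomous ODE on $[0,1]_\tau \times \ctQ{\rho_0} \times \R^{n+1}$ has right-hand side smooth up to $\{\tau = 1\}$ and $\{\rho = 0\}$, with $\{\rho = 0\}$ an invariant face on which the system reduces to the one-dimensional model.

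By Lemma \ref{normlem}, the initial data $(\xi^0, \xi^s, \xi^n)(0) = (1 + O(\rho(q)), O(\rho(q)), O(\rho(q)))$ depend smoothly on $q \in \ctQ{\rho_0}$, so smooth dependence of ODE solutions on initial conditions extends $\exp$ to a smooth map $\cbundle{N_+\tQ_{\rho_0}} \to \tX$. Restricting to $\{\rho = 0\}$ gives the explicit formula (\ref{ssoln}) and the statement that $\exp([0,1]\times\{q\})$ traverses the $b$-fiber through $q$. For the quadratic estimate (\ref{quadest}), Lemma \ref{normlem} forces $\xi^s(0), \xi^n(0) = O(\rho(q))$; combined with $\xi^s = \xi^n \equiv 0$ on the invariant face $\{\rho = 0\}$, smoothness of the extended flow forces $\xi^s(\tau), \xi^n(\tau) = O(\rho(q))$ uniformly in $\tau$. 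Hence $dx^\mu/dt = O(\rho(q)^2\sin\theta)$ along the geodesic, and integrating against the bound $\sin\theta \leq A_2e^{-t}$ from Proposition \ref{geodprop}\ref{thetacond} yields (\ref{quadest}).

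For the local diffeomorphism claim and (\ref{ceq}), I compute $d\exp$ on the boundary of $\cbundle{N_+\tQ_{\rho_0}}$ at a point $(\tau, q)$ with $q \in \tQ\cap\tS$: directions in $T(\tQ\cap\tS)$ are preserved by smooth dependence on initial data, $\partial_\tau$ maps to $\partial_v$ on the fiber by the exact formula (\ref{ssoln}), and $\partial_\rho$ maps to a $\rho$-direction with leading coefficient close to $1$, from $\rho(\exp(\tau, q))/\rho(q) \to 1$ as $\rho(q) \to 0$ (as in the remark following the statement, improving Proposition \ref{geodprop}\ref{rhocond}). These $n+1$ directions span, so $d\exp$ is an isomorphism on the boundary, and the inverse function theorem promotes this to a local diffeomorphism nearby. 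The $\bar g$-bound (\ref{ceq}) reduces to the same computation once expressed in the $0$-edge frame: since $r(\exp(t\nu_q))/r(q)$ is bounded above and below by Proposition \ref{geodprop}, and since the Jacobian in the $0e$-frame is uniformly invertible, the desired inequality follows. The main obstacle is the initial desingularization step: one must verify that every $\csc\theta$ and $\rho^{-1}$ factor in the Christoffel symbols of an admissible metric is exactly compensated by the $0e$-rescaling of $\dot\gamma$ and the reparametrisation $\tau = 1 - e^{-t}$, and that the perturbation $(\rho\sin\theta)\ell$ contributes only smooth higher-order terms. This careful bookkeeping, organized against the block forms (\ref{g})-(\ref{ginv}), is the heart of the proof and is exactly where Mazzeo's method is invoked.
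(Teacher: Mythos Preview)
Your overall strategy—rescale to $0e$-variables, solve explicitly on the invariant face $\{\rho = 0\}$, then invoke smooth dependence—matches the paper's, and your arguments for (\ref{quadest}) and (\ref{ceq}) are sound once the smooth extension is in hand. The gap is in establishing that extension at $\tau = 1$.

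You assert that after the $0e$-rescaling and the reparametrisation $d/d\tau = (1-\tau)^{-1}\,d/dt$, the resulting system has right-hand side smooth up to $\{\tau = 1\}$. It does not. In your variables, $dv/d\tau = (1-\tau)^{-1}\,v\,\xi^0$ and $dx^\mu/d\tau = (1-\tau)^{-1}\,\rho\sin\theta\cdot\xi^\mu$; the factor $(1-\tau)^{-1}$ depends on the independent variable alone and is not cancelled by anything in the phase space $(v,x^s,\rho,\xi)$. On the model solution one has $v = v_0(1-\tau)$ and $\xi^0 = -1$, so the product $(1-\tau)^{-1}v\xi^0$ happens to be bounded \emph{along that solution}, but that is not the same as the right-hand side being a smooth function of $(\tau,v,x^s,\rho,\xi)$, which is what smooth dependence on initial data requires. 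Substituting $V = v/(1-\tau)$ does not help: $dV/d\tau = (1-\tau)^{-1}V(\xi^0+1)$, still singular. The face $\tau = 1$ is a regular singular point of your system, and smoothness of solutions up to it cannot be read off directly. (Relatedly, the system is not autonomous in $\tau$.)

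The paper resolves this in two steps. First, since $\dot\theta < 0$ throughout (Proposition~\ref{geodprop}), one reparametrises the \emph{transverse} variables $(x^\mu,\bar\xi_i)$ by $\theta$ rather than $\tau$; after checking that the apparent poles coming from $\dot\theta^{-1}$ are cancelled by the asymptotic $\bar\xi_0^2 = 1 + O(\sin\theta)$ (itself obtained by a bootstrap through the unit-speed condition and the flow equations), this $(2n+1)$-dimensional system is smooth up to $\theta = 0$, so $x^\mu,\bar\xi_i$ are smooth in $\theta$ and the initial data. Second, the remaining scalar equation $\dot\theta = -\theta\,a(\theta)$ with $a$ smooth and $a(0) = 1$ is separable; integrating gives $\theta\,e^{\int_0^\theta b} = c(1-\tau)$, and the implicit function theorem then expresses $\theta$ smoothly in $(\tau,q)$ up to $\tau = 1$. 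This two-step treatment of the $\tau = 1$ face is precisely what is missing from your plan. Two minor points: the paper uses cotangent (Hamiltonian) variables $\bar\xi_\mu = \rho\xi_\mu$, $\bar\xi_0 = \sin\theta\cdot\xi_0$ rather than your tangent velocities, which spares one the Christoffel symbols; and your initial condition should read $\xi^0(0) = -1 + O(\rho)$, not $+1$, since the inward normal points toward decreasing $\theta$.
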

\begin{proof}
  We begin by recalling the equations for the geodesic flow on the cotangent bundle. Let $\left\{ x^s \right\}$ be local coordinates for $S$,
  so that $(\theta,x^s,\rho)$ are coordinates on some neighborhood $\tU \subseteq \tX$ of a fiber $F$ in $\tS$. 
  Let $\rho_1$ be small enough that Proposition \ref{geodprop} holds on $\tQ_{\rho_1}$, and let
  $V \subseteq \ctQ{\rho_1}$ be a sufficiently small neighborhood of the point $\tQ \cap F$ that
  normal geodesics off points in $V \setminus \tS$ remain in $\tU$.
  
  The geodesic flow off points of $V \setminus \tS$ then satisfies
  \begin{align*}
    \dot{x}^i &= g^{ij}\xi_j\\
    \dot{\xi}_i &= -\frac{1}{2}\frac{\partial g^{kl}}{\partial x^i}\xi_k\xi_l.
  \end{align*}
We also have
\begin{equation}
  \label{normedeq}
  g^{ij}\xi_i\xi_j = 1.
\end{equation}
We use this fact to rewrite the geodesic equations in terms of $\bar{g} = \rho^2\sin^2(\theta)g$ or, rather, $\bar{g}^{-1} = \rho^{-2}\csc^2(\theta)g^{-1}$, obtaining
\begin{equation}
  \label{floweq1}
\begin{aligned}
  \dot{x}^i &= \rho^2\sin^2(\theta)\bar{g}^{ij}\xi_j\\
  \dot{\xi}_i &= -\frac{1}{2}\frac{\partial}{\partial x^i}\left[ \rho^2\sin^2(\theta)\bar{g}^{kl} \right]\xi_k\xi_l\\
  &= -\frac{\rho_i}{\rho} - \cot(\theta)\theta_i - \frac{1}{2}\rho^2\sin^2(\theta)\frac{\partial \bar{g}^{kl}}{\partial x^i}\xi_k\xi_l.
\end{aligned}
\end{equation}
This system is obviously degenerate at both $\theta = 0$ and $\rho = 0$. We thus introduce rescaled variables, setting
\begin{equation}
  \label{rescaled}
  \begin{aligned}
  \bar{\xi}_{\mu} &= \rho \xi_{\mu}\quad(1 \leq \mu \leq n)\\
  \bar{\xi}_0 &= \sin(\theta)\xi_0.
\end{aligned}
\end{equation}
Hence,
\begin{equation}
  \label{eqmotresc}
  \begin{aligned}
  \dot{\bar{\xi}}_{\mu} &= \dot{\rho}\xi_{\mu} + \rho\dot{\xi}_{\mu} = \frac{\dot{\rho}}{\rho}\bar{\xi}_{\mu} + \rho\dot{\xi}_{\mu}\\
  \dot{\bar{\xi}}_0 &= \cos(\theta)\dot{\theta}\xi_0 + \sin(\theta)\dot{\xi}_0 = \cot(\theta)\dot{\theta}\bar{\xi}_0 + \sin(\theta)\dot{\xi}_0.
\end{aligned}
\end{equation}
Now
\begin{equation}
  \label{dottedeq}
  \begin{aligned}
  \dot{\rho} &= \rho^2\sin^2(\theta)\bar{g}^{nj}\xi_j = \rho\sin^2(\theta)\bar{g}^{n\mu}\bar{\xi}_{\mu} + \rho^2\sin(\theta)\bar{g}^{n0}\bar{\xi}_0\text{ and}\\
  \dot{\theta} &= \rho^2\sin^2(\theta)\bar{g}^{0j}\xi_j = \rho\sin^2(\theta)\bar{g}^{0\mu}\bar{\xi}_{\mu} + \rho^2\sin(\theta)\bar{g}^{00}\bar{\xi}_0.
\end{aligned}
\end{equation}
Thus, rewriting our equations of motion (\ref{floweq1}) and (\ref{eqmotresc}) in terms of our new variables, we get
\begin{equation}
  \label{floweq2}
  \begin{aligned}
    \dot{x}^i &= \rho\sin^2(\theta)\bar{g}^{i\mu}\bar{\xi}_{\mu} + \rho^2\sin(\theta)\bar{g}^{i0}\bar{\xi}_0\\
    \dot{\bar{\xi}}_{\mu} &= (\sin^2(\theta)\bar{g}^{n\nu}\bar{\xi}_{\nu} + \rho\sin(\theta)\bar{g}^{n0}\bar{\xi}_0)\bar{\xi}_{\mu} - \rho_{\mu} -
    \frac{1}{2}\rho\sin^2(\theta)\frac{\partial \bar{g}^{\sigma\lambda}}{\partial x^{\mu}}\bar{\xi}_{\sigma}\bar{\xi}_{\lambda}
    \\&\qquad- \rho^2\sin(\theta)\frac{\partial \bar{g}^{0\sigma}}{\partial x^{\mu}}\bar{\xi}_0\bar{\xi}_{\sigma} - \frac{1}{2}\rho^3\frac{\partial\bar{g}^{00}}
    {\partial x^{\mu}}\bar{\xi}_0^2\\
    \dot{\bar{\xi}}_0 &= (\rho\sin(\theta)\cos(\theta)\bar{g}^{0\mu}\bar{\xi}_{\mu} + \rho^2\cos(\theta)\bar{g}^{00}\bar{\xi}_0)\bar{\xi}_0 - \cos(\theta)
    - \frac{1}{2}\sin^3(\theta)\frac{\partial \bar{g}^{\sigma\lambda}}{\partial \theta}\bar{\xi}_{\sigma}\bar{\xi}_{\lambda}\\&\qquad
    -\rho\sin^2(\theta)\frac{\partial \bar{g}^{0\sigma}}{\partial \theta}\bar{\xi}_0\bar{\xi}_{\sigma}-\frac{1}{2}\rho^2\sin(\theta)
    \frac{\partial \bar{g}^{00}}{\partial \theta}\bar{\xi}_0^2.
  \end{aligned}
\end{equation}
Now $\bar{g}^{00} = O(\rho^{-2})$; otherwise $\bar{g}^{-1}$ is smooth on $\tX$ by (\ref{ginv}). It follows that
equations (\ref{floweq2}) have smooth coefficients all the way up to $\rho = 0$. 

We already know that, for $\rho(q) \neq 0$, solutions exist for all time $t$. We turn to study the case
when $\rho(q) = 0$, that is, when the geodesic starts from some point $q \in V \cap \tS$. Using (\ref{ginv}), and
recalling that $\bar{g}_{st}$ is independent of $\theta$ at $\rho = 0$, note that when $\rho = 0$, the equations (\ref{floweq2}) are given by
\begin{equation}
  \label{rho0eq}
\begin{aligned}
  \dot{\theta} &= \sin(\theta)\bar{\xi}_0\\
  \dot{x}^{\mu} &= 0\\
  \dot{\bar{\xi}}_0 &= \cos(\theta)\bar{\xi}_0^2 - \cos(\theta)\\
  \dot{\bar{\xi}}_{\mu} &= \sin^2(\theta)\bar{\xi}_n\bar{\xi}_{\mu} - \rho_{\mu} + \rho_{\mu}\bar{\xi}_0^2.
\end{aligned}
\end{equation}
Our initial conditions for $q$, by (\ref{qnorm}), (\ref{normedeq}), and the above, are
\begin{equation}
  \label{flowindcond}
  \begin{aligned}
  \theta(0) &= \theta(q)\\
  x^s(0) &= x^s(q)\\
  \rho(0) &= 0\\
  \bar{\xi}_0(0) &= -1\\
  \bar{\xi}_{\mu}(0) &= 0.
\end{aligned}
\end{equation}
Let $\psi(v)$ be the inverse of the function $\theta \mapsto \csc \theta - \cot \theta$; thus $\psi$ is defined on 
$(0,\infty)$ and is monotonic increasing from
$0$ to $\pi$. Then observe that the solution to (\ref{floweq2}) with the given initial conditions is given by
\begin{equation}
  \label{rho0sol}
  \begin{aligned}
    \theta(t) &= \psi((\csc\theta(q) - \cot\theta(q))e^{-t})\\
    x^s(t) &\equiv x^s(0)\\
    \rho(t) &\equiv 0\\
    \bar{\xi}_0(t) &\equiv -1\\
    \bar{\xi}_{\mu}(t) &\equiv 0.
  \end{aligned}
\end{equation}
This exists for all $t \geq 0$, and it satisfies the properties that $\dot\theta < 0$ for all time and that $\lim_{t \to \infty}\theta(t) = 0$. 
Using smooth dependence of solutions on initial conditions, we conclude that
solutions to the 
geodesic equations may be smoothly extended to $\rho = 0$ for all time $t \geq 0$. Thus, $\exp$ is smooth on 
$[0,\infty) \times V$; and by compactness of $\tS$, on $[0,\infty)_t \times \ctQ{\rho_0}$ for some $\rho_0$.

We now turn our attention to $\theta = 0$, which corresponds to $t = \infty$.
We compactify the normal bundle, as above, by setting $\tau = 1 - e^{-t}$, and we wish to show that the exponential map is smooth to $\tau = 1$. 
It will be important throughout to understand the asymptotic behavior of $\bar{\xi}_i$. Now by (\ref{normedeq}),
we have
\begin{align*}
  1 &= g^{-1}(\xi,\xi)\\
  &= \left(\begin{array}{cc}
    \sin(\theta)\xi_0,&\rho\sin(\theta)\xi_{\mu}
  \end{array}\right)B
  \left(\begin{array}{c}
    \sin(\theta)\xi_0\\
    \rho\sin(\theta)\xi_{\mu}\end{array}\right),
\end{align*}
where $B$ is a smooth, uniformly positive definite matrix on $\tU$. Thus, for some $c > 0$, we get
\begin{equation*}
  c(\sin^2(\theta)\xi_0^2 + \delta^{\mu\nu}\rho^2\sin^2(\theta)\xi_{\mu}\xi_{\nu}) \leq 1,
\end{equation*}
from which it follows that $\xi_0 = O(\csc\theta)$ and $\xi_{\mu} = O(\rho^{-1}\csc\theta)$. Hence,
$\bar{\xi}_0 = O(1)$ and $\bar{\xi}_{\mu} = O(\csc\theta) = O(e^t)$, both uniformly in the starting point $q$.
Putting this into (\ref{floweq2}), we find
that $\dot{\bar{\xi}}_{\mu} = O(1)$, from which it follows that we can improve our estimate to $\bar{\xi}_{\mu} = O(t) = O(|\log \sin \theta|)$. Finally, we put this
back into (\ref{normedeq}) and substitute (\ref{rescaled}) to conclude that $\bar{\xi}_0^2 \to 1$ as $t \to \infty$ or $\theta \to 0$, indeed, that $\bar{\xi}_0^2 = 1 + O(e^{-t})
= 1 + O(\sin(\theta))$. Due to the sign of $\dot{\theta}$, we may likewise conclude
that 
\begin{equation*}
  \bar{\xi}_0 = -1 + O(e^{-t}) = -1 + O(\sin(\theta)).
\end{equation*}
We here use that $\rho^2\bar{g}^{00} = 1 + O(\rho\sin(\theta))$.

Because $\dot{\theta} < 0$ for all $t$, we may reparametrize our geodesic equations by $\theta$. This amounts, by the chain rule, to dividing by $\dot{\theta}$,
and by (\ref{dottedeq}), we have 
$\dot{\theta} = \rho\sin^2(\theta)\bar{g}^{0\mu}\bar{\xi}_{\mu} + \rho^2\sin(\theta)\bar{g}^{00}\bar{\xi}_0$. (We recall that
$\bar{g}^{00} = O(\rho^{-2})$, and regard $\rho^2\bar{g}^{00}$ as a single smooth function up to $\rho = 0$, which however does not vanish.) 
Changing variables on the first equation in (\ref{floweq2}) we then get
\begin{equation*}
  \frac{dx^{\mu}}{d\theta} = \frac{\rho\sin^2(\theta)\bar{g}^{\mu\nu}\bar{\xi}_{\nu} + \rho^2\sin(\theta)\bar{g}^{\mu 0}\bar{\xi}_0}
  {\rho\sin^2(\theta)\bar{g}^{0\nu}\bar{\xi}_{\nu} + \rho^2\sin(\theta)\bar{g}^{00}\bar{\xi}_0}
  = \frac{\rho\sin(\theta)\bar{g}^{\mu\nu}\bar{\xi}_{\nu} + \rho^2\bar{g}^{\mu0}\bar{\xi}_0}
  {\rho\sin(\theta)\bar{g}^{0\nu}\bar{\xi}_{\nu} + \rho^2\bar{g}^{00}\bar{\xi}_0}.
\end{equation*}
Now, the denominator is just $\frac{\dot{\theta}}{\sin(\theta)}$, which we know by Proposition \ref{geodprop} is nonzero as a
function of $t$ when $t$ is finite, and thus as a function of $\theta$ when $\theta > 0$. On the other hand, when $\theta \to 0$, the denominator goes to
$-1$ by our above computations of $\bar{\xi}_i$ asymptotics. Thus, the denominator is bounded away from zero, and the equation is smooth in a neighborhood of our
solutions. 

We next study $\frac{\partial \bar{\xi}_{\mu}}{\partial \theta}$. All but two of the terms in $\dot{\bar{\xi}}_{\mu}$ have a factor of $\sin(\theta)$
and thus yield to the same analysis we just performed. Focusing on the remaining terms, we have
\begin{equation*}
  \frac{\partial \bar{\xi}_{\mu}}{\partial \theta} = (smooth) - \frac{\rho_{\mu} + \frac{1}{2}\rho^3\frac{\partial \bar{g}^{00}}{\partial x^{\mu}}\bar{\xi}_0^2}
  {\rho\sin^2(\theta)\bar{g}^{0\mu}\bar{\xi}_{\mu} + \rho^2\sin(\theta)\bar{g}^{00}\bar{\xi}_0}.
\end{equation*}
Now if $\mu = s \neq n$, the numerator vanishes to order $\sin(\theta)$, so again
the equations are smooth in a neighborhood of our solutions. If $\mu = n$, then the numerator is
$1 - \bar{\xi}_0^2 + O(\sin(\theta))$. But $\bar{\xi}_0^2 = 1 + O(\sin(\theta))$, so the numerator is $O(\sin(\theta))$, and this equation is smooth in
a neighborhood of our solutions.
Finally we study $\frac{\partial \bar{\xi}_0}{\partial \theta}$. Once again, only two terms of $\dot{\bar{\xi}}_0$ lack a factor of $\sin(\theta)$, their sum
being $\cos(\theta)(\rho^2\bar{g}^{00}\bar{\xi}_0^2 - 1)$. For the same reasons as before, this is in fact $O(\sin(\theta))$. Thus, the entire
$(x^{\mu},\bar{\xi}_i)$ system is smooth up to $\theta = 0$; and so the solutions are smooth as functions of $\theta$, and depend smoothly on the initial point
$q \in \ctQ{\rho_0}$. All of this analysis is uniform up to $\rho = 0$.

It remains to show that $\theta$ is smooth in $\tau$ up to $\tau = 1$, and depends smoothly on $q$; of course, we already know this for $\tau < 1$.
We have just used $2n + 1$ of the equations in (\ref{floweq2}); the remaining equation, for $\dot{\theta}$, can now be written as a scalar ODE
for $\theta$ in terms of $t$, since $x^{i}$ and $\bar{\xi}_i$ depend smoothly on $\theta$.
Explicitly, the equation is
\begin{equation*}
  \dot{\theta} = \rho\sin^2(\theta)\bar{g}^{0\mu}\bar{\xi}_{\mu} + \rho^2\sin(\theta)\bar{g}^{00}\bar{\xi}_0.
\end{equation*}
The right-hand side is $O(\theta)$, so write the equation as
\begin{equation}
  \label{dotphi}
  \dot{\theta} = -\theta a(\theta),
\end{equation}
Here $a$ is smooth in $\theta$ all the way to $\theta = 0$.
It is clear from our earlier analysis of $\bar{\xi}_0$ that $a(0) = 1$ for all $q$, and also
that $a$ is nonvanishing for $\theta$ along our curves.
We reparametrize $\theta$ by $\tau = 1 - e^{-t}$, and (\ref{dotphi}) becomes
\begin{equation*}
  (\tau - 1)\frac{d\theta}{d\tau} = \theta a(\theta).
\end{equation*}
This is a separable equation; if we write $a(\theta)^{-1} = 1 + \theta b(\theta)$, then the equation has solution
\begin{equation*}
  \theta e^{\int_0^{\theta}b(\zeta)d\zeta} = c(1 - \tau),
\end{equation*}
which holds for $0 \leq \tau \leq 1$, and where $b$ is smooth in both $\theta$ and $q$. Now by the implicit function theorem, this uniquely
defines $\theta$ as a function of $\tau$ and $q$ near $\tau = 1$, smoothly depending on both variables. Thus, as desired, $\theta$ -- and, hence, the entire
$(x^i,\bar{\xi}_i)$ system -- exists and depends smoothly on both $\tau$ and $q$ for $\tau \in [0,1]$ and for $q$ up to $\tS$.

We have still to show that $\exp$ is a local diffeomorphism on $\cbundle{N_+\tQ_{\rho_0}}$. Now it is elementary that, given
a smooth map between manifolds with corner which takes the corner to the corner, the boundary interior to the boundary interior, and the
interior to the interior, it is a local diffeomorphism if and only if its differential is nowhere singular. It suffices, then, to show that
$d\exp$ is everywhere a bijection, or that $\det d\exp \neq 0$. Given this last formulation, it suffices to show this on 
$[0,1] \times (\tQ \cap \tS) 
\subset \cbundle{N_+\tQ_{\rho_0}}$, and it will then follow
for $(\tau,q) \in [0,1] \times \ctQ{\rho_0}$ by shrinking $\rho_0$. 
Now it is clear from (\ref{rho0eq}) that $\exp|_{[0,1] \times (\tQ \cap \tS)}:[0,1] \times (\tQ \cap \tS) \to \tS$
is a diffeomorphism. Moreover, by Proposition \ref{geodprop}(a), $(d\exp)|_{T\cbundle{N_+(\tQ\setminus\tS)}|_{[0,1]\times
(\tQ \cap \tS)}}$ takes nonzero transverse vectors to nonzero transverse vectors. Thus, $d\exp$ is an isomorphism,
and $\exp$ is a local diffeomorphism on all of $\cbundle{N_+\tQ_{\rho_0}}$ (for $\rho_0$ small).

Next we wish to demonstrate (\ref{quadest}) using (\ref{floweq2}) and the equation of variation. We consider perturbations about the solution
(\ref{rho0sol}) starting from $q \in \tQ \cap \tS$, as $q$ varies.
Write (\ref{floweq2}) as
$(x,\bar{\xi})^{\cdot} = F(x,\bar{\xi})$; let $F^{\mu}$ be the component of $F$ corresponding to $x^{\mu}$.
Then the equation of variation tells us that
\begin{equation}
  \label{eqvar}
  \frac{\partial}{\partial t}\frac{\partial x^{\mu}}{\partial \rho(q)} = \frac{\partial F^{\mu}}{\partial x^i}\frac{\partial x^i}{\partial \rho}
  + \frac{\partial F^{\mu}}{\partial \bar{\xi}_i}\frac{\partial \bar{\xi}_i}{\partial \rho};
\end{equation}
and because, at $t = 0$, $x^{\mu}$ are simply the coordinates of $q$, we have
initial condition $\left.\frac{\partial x^{\mu}}{\partial \rho}\right|_{t = 0} = \delta^{\mu n}$. There is additionally an
initial condition for $\left.\frac{\partial \bar{\xi}_i}{\partial\rho}\right|_{t = 0}$, smooth in $q$, but we do not need to write it explicitly.

We claim that the right-hand side of (\ref{eqvar}) is $0$ along our solution. By (\ref{rho0sol}), we have in this case $\rho = 0$ and $\bar{\xi}_{\mu} = 0$, and $\bar{\xi}_0 = -1$.
First consider the first term of (\ref{eqvar}), involving $\frac{\partial F^{\mu}}{\partial x^i}$.
By (\ref{floweq2}), $F^{\mu} = \rho\sin^2(\theta)\bar{g}^{\mu\nu}\bar{\xi}_{\nu} + \rho^2\sin(\theta)\bar{g}^{\mu 0}\bar{\xi}_0$.
Because $\bar{\xi}_{\nu} = 0$ along our solution, the derivative of the first term of $F^{\mu}$ vanishes easily, and the derivative of the second
term vanishes because $\rho = 0$. Very similar considerations show that the second term of (\ref{eqvar}) vanish because $\rho = 0$ along the solution.
Hence, the entire right-hand side of (\ref{eqvar}) vanishes
identically along our solution.  Thus, $\frac{\partial x^{\mu}}{\partial\rho(q)} = \delta^{\mu n} + O(\rho)$, which establishes (\ref{quadest}).

We now turn to the final statement. Let $\tU$, $V$, and $\tQ_{\rho_0}$ be as above.
Notice by (\ref{g}) that $\bar{g}$ extends to $\tS$ as a smooth symmetric positive semidefinite tensor field, and that along
$\tS$, we have $\ker \bar{g} = \spn\left\{ \frac{\partial}{\partial \theta} \right\}$. It follows that $\bar{g}|_{TV}$ is a metric.
Now for any $(\tau,q) \in \cbundle{N_+(V\setminus\tS)}$, we have $T_{(\tau,q)}\cbundle{N_+(V\setminus\tS)} \cong \mathbb{R}\frac{\partial}{\partial \tau} 
\oplus T_qV$ 
canonically. For $0 \leq \tau \leq 1$, define $\exp_{\tau}:\overline{V} \to \tX$ by $\exp_{\tau}(q) = \exp(\tau,q)$. The function
$f:[0,1] \times T\overline{V} \to \tX$ given by
$f(\tau,Y) = |d\exp_{\tau}(Y)|_{\bar{g}}$ is a smooth map. Now
$\exp$ is a local diffeomorphism such that $0 \neq d\exp_{(\tau,q)}\left( \frac{\partial}{\partial \tau} \right) 
\in \spn\frac{\partial}{\partial \theta} = \ker\bar{g}$ for 
$q \in V \cap \tS$. We conclude that $f$ is nonvanishing. Thus, it attains
a positive minimum on the compact set $[0,1] \times S^1_{\bar{g}}T\overline{V}$. This yields the claim.
\end{proof}

The above proof relied in a fundamental way on the behavior of the extension $\exp$ to the boundary $\tS$ in order 
to show that
$\exp$ is a local diffeomorphism. It is possible to give a proof on the interior that $\exp$ is a local diffeomorphism
using Jacobi fields in a more general setting. The following result is unlikely to surprise practitioners in the area,
but we did not find a published proof. Because of its potential applications in other settings, 
it may be worthwhile to record explicitly
in the literature, so we state and prove the result, and then use it in Proposition \ref{difflem} to give an alternate proof
of the local diffeormorphism property in Proposition \ref{extendprop}. The remainder of this section will not be used in
subsequent sections.

\begin{proposition}
  \label{explem}
  Let $\beta > 0$ and $0 < \kappa < \sqrt{\beta}$, and let $(Z,g)$ be a Riemannian manifold with hypersurface 
  $Q$ having unit normal field $\nu$.
  Suppose that $|g^{-1}K| \leq \kappa$ on $Q$, where $K$ is the second fundamental form of $Q$ and $|g^{-1}K|$ is 
  the maximal absolute value of an eigenvalue
  of the shape operator. Moreover, suppose $W \subseteq N_+Q$ is an open
  subset of the one-sided normal bundle to $Q$ having the property that whenever $Y \in W$, $tY \in W$ for $0 \leq t \leq 1$. Finally
  suppose that all sectional curvatures of $g$ are bounded above by $-\beta$ on $\exp(W)$. 
  Then $\exp$ is a local diffeomorphism
  on $W$, and if $\xi:(-\varepsilon,\varepsilon) \to Q$ is a smooth curve with
  $\nu_{\xi(s)} \in W$ and if $\Gamma:[0,a) \times (-\varepsilon,\varepsilon) \to Z$ is given by $\Gamma_t(s) := \Gamma(t,s)
  = \exp(t\nu_{\xi(s)})$, then for all $t \geq 0$ and $s \in (-\varepsilon,\varepsilon)$, 
  $|\Gamma_t'(s)|_g \geq c|\xi'(s)|_g$, where $c = \sqrt{\frac{1}{2}\left( 1 - \frac{\kappa^2}{\beta} \right)}$.
\end{proposition}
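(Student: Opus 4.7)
The plan is to reduce both the lower bound on $|\Gamma_t'(s)|_g$ and the local diffeomorphism property to a Jacobi-field comparison argument, exploiting that $\Gamma_t'(s)$ is precisely a Jacobi field along $\gamma_s(t) := \exp(t\nu_{\xi(s)})$. First, I would fix $s$ and set $J(t) = \Gamma_t'(s)$. A standard computation using the symmetry of mixed covariant derivatives gives initial data $J(0) = \xi'(s)$ and $\nabla_t J(0) = \nabla_{\xi'(s)}\nu$. Since $\xi'(s) \in T_{\xi(s)}Q$, both of these lie in $T_{\xi(s)}Q$ and are therefore orthogonal to $\gamma_s'(0) = \nu$; the argument that $\langle J,\gamma_s'\rangle$ is affine in $t$ with vanishing initial value and derivative then shows $J(t) \perp \gamma_s'(t)$ for all $t$. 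By the assumption $|g^{-1}K| \leq \kappa$, we also have $|\nabla_t J(0)| \leq \kappa |J(0)|$.

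Setting $u(t) := |J(t)|$, the Jacobi equation together with the upper sectional curvature bound yields
\begin{equation*}
(|J|^2)'' = 2|\nabla_t J|^2 - 2R(J,\gamma_s',\gamma_s',J) \geq 2|\nabla_t J|^2 + 2\beta |J|^2.
\end{equation*}
Expanding the left-hand side as $2u'^2 + 2uu''$ and applying the Cauchy--Schwarz inequality $(uu')^2 = \langle \nabla_t J, J\rangle^2 \leq u^2|\nabla_t J|^2$, one obtains the scalar inequality $u'' \geq \beta u$ wherever $u > 0$. I would then compare to the solution $v$ of $v'' = \beta v$ with $v(0) = u(0)$ and $v'(0) = -\kappa u(0)$ (the least favorable choice consistent with $|u'(0)| \leq \kappa u(0)$). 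Explicitly, $v(t) = u(0)\bigl[\cosh(\sqrt\beta t) - (\kappa/\sqrt\beta)\sinh(\sqrt\beta t)\bigr]$, which stays strictly positive for all $t \geq 0$ because $\kappa/\sqrt\beta < 1$ forces $\coth(\sqrt\beta t) > \kappa/\sqrt\beta$, and which attains its minimum value $u(0)\sqrt{1-\kappa^2/\beta}$ at $\tanh(\sqrt\beta t^*) = \kappa/\sqrt\beta$.

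The comparison itself I would do via a Wronskian: $W := vu' - uv'$ satisfies $W' = v u'' - u v'' \geq \beta(vu - uv) = 0$ and $W(0) \geq 0$, so $W \geq 0$ and hence $(u/v)' = W/v^2 \geq 0$ on any interval where $u,v > 0$. A continuity argument, taking $t^{**} := \sup\{t \in [0,a) : u > 0 \text{ on } [0,t]\}$, then shows $u(t) \geq v(t) \geq u(0)\sqrt{1-\kappa^2/\beta}$ throughout $[0,a)$, which is strictly stronger than the claimed constant $c = \sqrt{\tfrac12(1 - \kappa^2/\beta)}$. I expect the main subtle point here to be justifying that $u$ remains strictly positive so that the inequality $u'' \geq \beta u$ and the Wronskian computation are valid; the positivity of the comparison function $v$ together with the continuity argument closes this loop.

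For the local diffeomorphism claim, I would use the identification $N_+Q \cong [0,\infty) \times Q$ to decompose $T_{Y_0}W \cong \mathbb{R}\partial_t \oplus T_{q_0}Q$ for $Y_0 = t_0\nu_{q_0}$. Under this decomposition $d\exp_{Y_0}(a, Y) = a\gamma'(t_0) + J_Y(t_0)$, where $J_Y$ is the Jacobi field along $\gamma$ with initial data $(Y, \nabla_Y\nu)$. The preceding estimate applied to a curve $\xi$ in $Q$ with $\xi'(0) = Y$ gives $|J_Y(t_0)| \geq c|Y|$, so $J_Y(t_0) \neq 0$ whenever $Y \neq 0$; since $J_Y(t_0) \perp \gamma'(t_0)$, the equation $a\gamma'(t_0) + J_Y(t_0) = 0$ forces $a = 0$ and $Y = 0$. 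Thus $d\exp_{Y_0}$ is injective, hence an isomorphism by dimension count, and $\exp$ is a local diffeomorphism on $W$.
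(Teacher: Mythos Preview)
Your argument is correct and in fact yields a sharper constant than the paper's. Both proofs begin identically: identify $\Gamma_t'(s)$ as a Jacobi field $J$ along a normal geodesic, compute its initial data from the shape operator, and use normality together with the curvature bound to get a second-order differential inequality. The divergence is in the scalar reduction. The paper sets $f=|J|^2$ and derives the nonlinear inequality $f'' \geq 2\beta f + \tfrac12 f'^2/f$, which it then compares against the explicit solution $h$ of the corresponding equation via a Lipschitz truncation of the right-hand side and the general ODE comparison result already established in the paper (Proposition~\ref{complem}). You instead set $u=|J|$ and, by Cauchy--Schwarz, extract the linear inequality $u''\geq\beta u$ wherever $u>0$; the comparison with $v(t)=u(0)[\cosh(\sqrt\beta t)-(\kappa/\sqrt\beta)\sinh(\sqrt\beta t)]$ then goes through by a clean Wronskian argument. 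Your route is more elementary (no nonlinear ODE, no need for the Lipschitz cutoff or the abstract comparison lemma) and gives $|J(t)|\geq|J(0)|\sqrt{1-\kappa^2/\beta}$, improving on the paper's $c$ by a factor of $\sqrt 2$; indeed the paper's comparison function $h$ is exactly $v^2$, and its actual minimum is $1-\kappa^2/\beta$ rather than the $\tfrac12(1-\kappa^2/\beta)$ quoted there. The paper's approach has the minor structural advantage of reusing its own Proposition~\ref{complem}, but yours is the cleaner self-contained argument.
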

\begin{proof}
  Let $\pi:N_+Q \to Q$ be the basepoint map. For convenience, we assume that $\pi(W) = Q$ (or we could just restrict $Q$). For each $p \in Q$,
  we let $\gamma_p$ be the geodesic in $Z$ for which $\gamma(0) = p$ and $\gamma'(0) = \nu_p$.

  Let $(t_0,p) \in [0,\infty) \times Q \approx N_+Q$ be fixed.
  Plainly $d\exp_{(t_0,p)}\left( \frac{d}{dt} \right) = \gamma_p'(t_0) \neq 0$. For $Y \in T_pQ$ with
  $|Y|_g = 1$ for convenience, let $\xi:(-\varepsilon,\varepsilon) \to Q$ be a smooth curve such that
  $\xi(0) = p$ and $\xi'(0) = Y$. Let $a > 0$ be sufficiently small that $a\nu_{\xi(s)} \in W$ for each $s \in (-\varepsilon,\varepsilon)$
  (shrinking $\varepsilon$ if necessary).
  For $(t,s) \in [0,a) \times (-\varepsilon,\varepsilon)$, define $\Gamma(t,s) =
  \gamma_{\xi(s)}(t)$. Then $d\exp_{(t_0,p)}(Y) = \partial_s\Gamma(t_0,0)$. (We are using the identification 
  $T_{(t_0,p)}N_{+}Q
  \cong \mathbb{R} \oplus T_pQ$.)
  This is simply the Jacobi field along $\gamma_p$ defined by the smooth variation $\Gamma$
  evaluated at $t_0$. Thus, since $Y$ is arbitrary, it suffices to show that the Jacobi field $J(t) = \partial_s\Gamma(t,0)$ is nonvanishing
  and is nowhere parallel to $\gamma_p'(t)$. At $t = 0$, we have $J(0) = \xi'(0) = Y \perp \nu_p$. Moreover, by the symmetry lemma
  we have
  \begin{equation*}
    D_tJ(t) = D_t\partial_s\Gamma(t,s)|_{s = 0} = D_s\partial_t\Gamma(t,s)|_{s = 0},
  \end{equation*}
  where $D_t$ denotes covariant differentiation along the curve $t \mapsto \Gamma(t,s)$, and similarly for $D_s$. At $t = 0$, this gives
  \begin{equation*}
    D_tJ(0) = D_s\gamma_{\xi(s)}'(0)|_{s = 0} = D_s\nu \perp \nu = \gamma_p'(0),
  \end{equation*}
  since $\nu$ is a unit vector field. Thus, at $t = 0$, both $J$ and $D_tJ$ are normal to $\gamma_p'$, which implies that
  $J$ is a normal Jacobi field. In particular, if nonvanishing, it is nowhere parallel to $\gamma_p'(t)$.

  Set $f(t) = \langle J(t),J(t)\rangle_g$. Then
  \begin{equation}
    \label{eqfprime}
    f'(t) = \frac{d}{dt}\langle J,J\rangle_g = 2\langle D_tJ,J\rangle_g.
  \end{equation}
  It follows by Cauchy-Schwartz that
  \begin{equation}
    \label{eqcs}
    |f'(t)| \leq 2|f(t)|^{\frac{1}{2}}|D_tJ|_g.
  \end{equation}
  As we have seen, $D_tJ|_{t = 0} = D_s\nu$, which is simply the shape operator applied to $Y = \xi'(0) = J(0)$. It follows by our hypothesis that
$\rho_0$, $\langle D_tJ,J\rangle_g|_{t = 0} \geq -\kappa$, so
  \begin{equation*}
    f'(0) = \frac{d}{dt}\langle J,J\rangle|_{t = 0} \geq -2\kappa.
  \end{equation*}
  Now by (\ref{eqfprime}), (\ref{eqcs}), and the Jacobi equation, whenever $f(t) \neq 0$ we have
  \begin{align}
    f''(t) = \frac{d^2}{dt^2}\langle J,J\rangle_g &= 2\langle D_t^2J,J\rangle + 2\langle D_tJ,D_tJ\rangle\nonumber\\
    &\geq -2R(J(t),\gamma_p'(t),\gamma_p'(t),J(t)) + \frac{1}{2}\frac{f'(t)^2}{f(t)}\nonumber\\
    &> 2\beta f(t) + \frac{1}{2}\frac{f'(t)^2}{f(t)}
  \end{align}
  since the sectional curvature is less than $-\beta$.

  We briefly pause to define weighted hyperbolic trigonometric functions. For $\eta \in \mathbb{R}$, define
  $\cosh_{\eta}(t) = \frac{1}{2}(e^t + \eta e^{-t})$, and $\sinh_{\eta}(t) = \frac{1}{2}(e^t - \eta e^{-t})$. It is easy to show that
  $\cosh_{\eta}'(t) = \sinh_{\eta}(t)$ and $\sinh_{\eta}'(t) = \cosh_{\eta}(t)$, and also that $\sinh_{\eta}^2(t) = \cosh_{\eta}^2(t) - \eta$.

  Now consider the second-order differential equation given by
  \begin{equation*}
    h''(t) = 2\beta h(t) + \frac{1}{2}\frac{h'(t)^2}{h(t)},
  \end{equation*}
  with $h(0) = 1$ and $h'(0) = -2\kappa$. Let $A = \frac{1}{2}\left(1 - \frac{\kappa^2}{\beta}\right) > 0, 
  B = \frac{1}{2}\left(1 - \frac{\kappa}{\sqrt{\beta}}\right)^2 > 0$, and
  $\eta = \frac{(\sqrt{\beta} + \kappa)^2}{(\sqrt{\beta} - \kappa)^2} > 0$. Then a solution to our initial-value problem is $h(t) =
  A + B \cosh_{\eta}(2\sqrt{\beta}t)$, as may be easily checked. We wish to apply Proposition \ref{complem} to show that $f(t) \geq h(t)$, and that
  thus $f$ is bounded below by $A = \frac{1}{2}\left(1 - \frac{\kappa^2}{\beta}\right)$. 
  Define $a:\mathbb{R}^2\setminus (\left\{ 0 \right\} \times \mathbb{R}) \to \mathbb{R}$ by
  \begin{equation*}
    a(u,v) = 2\beta u + \frac{1}{2}\frac{v^2}{u}.
  \end{equation*}
Now define $b:\mathbb{R}^2 \to \mathbb{R}$ by
 \begin{equation*}
    b(u,v) = \left\{
    \begin{array}{lr}
      2\beta u + \frac{1}{2}\frac{v^2}{u} & |v| < 2\sqrt{\beta}|u|\\
      4\beta u & |v| \geq 2\sqrt{\beta}|u|
    \end{array}\right..
  \end{equation*} 
  Note that $b$ is Lipschitz, and that whenever $u > 0$, we have $a \geq b$.

  Now $h$ satisfies $h''(t) - a(h(t),h'(t)) = 0$; and because $|h'(t)| < 2\sqrt{\beta}|h(t)|$, it also satisfies
  $h''(t) - b(h(t),h'(t)) = 0$. Similarly, because $f(0) \geq 0$ and because, whenever $f \neq 0$, $f$ satisfies $f''(t) \geq a(f(t),f'(t))$, it follows
  that, at least up until $f$ vanishes for the first time, we have $f''(t) \geq b(f(t),f'(t))$. But $b$ is Lipschitz and is nondecreasing in $u$.
  Thus, by Proposition \ref{complem}, $f \geq h$ on any interval $I = [0,t_0]$ such that $f$ is nonvanishing on $I$. But since $f$ is continuous, and $h$ is
  bounded away from 0 by $A > 0$, we conclude that $f$ must be everywhere greater than $A$. This yields the claim, taking $c = \sqrt{A}
  = \sqrt{\frac{1}{2}\left( 1 - \frac{\kappa^2}{\beta} \right)}$.
\end{proof}

To apply this to our situation, we require two lemmas that will also be independently useful in applications.

Let $\tU$ be a neighborhood on which a polar identification ($\theta,\pi_S,\rho)$ exists.
\begin{lemma}
  \label{ksmoothlem}
  Let $g$ be an admissible metric on $(\tX,\tM,\tQ,\tS)$. If $\bar{g} = \rho^2\sin^2(\theta)g$ is the compactified metric on the interior of
  $\tU$, then the second fundamental form $\overline{K}$ of $(\tQ \setminus \tS) \cap \tU$ with respect to $\bar{g}$ extends
  smoothly to $\tQ \cap \tU$.
\end{lemma}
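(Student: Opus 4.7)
The plan is to use the identity
\[
  \overline{K}(X,Y) = -\frac{1}{2|N^\perp|_{\bar g}}(\mathcal{L}_{N^\perp}\bar{g})(X,Y), \qquad X,Y \in T\tQ,
\]
for a suitable smooth (non-unit) normal field $N^\perp$ to $\tQ$. The tension driving the argument is that $\bar{g}$, while smooth on $\tX$, degenerates at $\tS$ along $\partial_\theta$, so any reasonable normal to $\tQ$ will have $|N^\perp|_{\bar{g}}$ vanishing like $\rho$ at $\tQ \cap \tS$. Smoothness of $\overline{K}$ therefore hinges on a matching vanishing of $\mathcal{L}_{N^\perp}\bar{g}$, and establishing this cancellation is the main obstacle.

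To construct $N^\perp$, I would first read off from (\ref{polform}), using $\bar{g} = r^2 g$ with $r = \rho\sin\theta$, that $\bar{g}_{\theta\theta} = \rho^2(1 + O(\rho\sin\theta))$, that $\bar{g}(\partial_\theta,\cdot)$ vanishes to order $\rho^2$ at $\tS$, and that $\bar{g}|_{T\tQ}$ extends smoothly and nondegenerately to $\tQ \cap \tS$ (where it reduces to $d\rho^2 + k_0$). Using Lemma \ref{normlem} to write $\tQ$ locally as the graph $\theta = \psi(x,\rho)$, pick any smooth frame $\{E_i\}$ of $T\tQ$ extending to $\tQ \cap \tS$, and define $N^\perp := \partial_\theta - a^i E_i$ with $a^i = (\bar{g}|_{T\tQ})^{ij}\bar{g}(\partial_\theta, E_j)$. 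Because $(\bar{g}|_{T\tQ})^{-1}$ is smooth and $\bar{g}(\partial_\theta, E_j)$ is $\rho^2$ times a smooth function, the $a^i$ are $O(\rho^2)$ smoothly; hence $N^\perp$ extends smoothly with $N^\perp|_{\tQ \cap \tS} = \partial_\theta$, and a direct computation gives $|N^\perp|_{\bar{g}}^2 = \rho^2(1 + O(\rho))$, so $\rho/|N^\perp|_{\bar{g}}$ is smooth and positive up to $\tQ \cap \tS$. Extend $N^\perp$ arbitrarily to a smooth field on a neighborhood of $\tQ$ in $\tX$ so the Lie derivative makes sense.

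The heart of the proof is the claim $(\mathcal{L}_{N^\perp}\bar{g})|_{\rho = 0} = 0$. In coordinates,
\[
  (\mathcal{L}_{N^\perp}\bar{g})_{ij} = (N^\perp)^k \partial_k \bar{g}_{ij} + \bar{g}_{kj}\partial_i(N^\perp)^k + \bar{g}_{ik}\partial_j(N^\perp)^k,
\]
and since $(N^\perp)^k = \delta^k_\theta + O(\rho^2)$ smoothly, the two derivative-of-$N^\perp$ terms vanish at $\rho=0$, reducing the claim to $\partial_\theta \bar{g}_{ij}|_{\rho=0} = 0$ for every $i,j$. This last identity follows from the polar form (\ref{polform}): $\bar g|_\tS$ is pulled back from $S$ by the fibration $b|_\tS$ (so is $\partial_\theta$-invariant there), and the remaining $\theta$-dependence of $\bar g$ enters only through $r = \rho\sin\theta$, $y = \rho\cos\theta$, the explicit $\rho^2 d\theta^2$ term, and the $O(\rho\sin\theta)$ perturbation, each of whose $\theta$-derivative vanishes at $\rho = 0$. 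By Hadamard's lemma, $\mathcal{L}_{N^\perp}\bar{g} = \rho\,\mathcal{T}$ for some smooth $(0,2)$-tensor $\mathcal{T}$ on a neighborhood of $\tQ \cap \tS$, and the opening identity rewrites as
\[
  \overline{K} = -\frac{\rho}{2|N^\perp|_{\bar{g}}}\,\mathcal{T}\big|_{T\tQ \otimes T\tQ},
\]
manifestly smooth up to $\tQ \cap \tS$. Once the cancellation is isolated, everything else reduces to routine bookkeeping from admissibility.
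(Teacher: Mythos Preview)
Your argument is correct and takes a genuinely different route from the paper. The paper works directly with the singular unit normal $\bar\nu = -\rho^{-1}\partial_\theta + O_{\bar g}(\rho)$ and Weingarten's equation $\overline{K}_{ij} = -\bar g_{kj}\partial_i\bar\nu^k - \bar\nu^l\overline\Gamma_{ilj}$; it then checks by hand that $\overline\Gamma_{i0j} = O(\rho)$ (this is exactly your observation $\partial_\theta\bar g_{ij}|_{\rho=0}=0$, phrased in Christoffel terms) and that the remaining dangerous term $\bar g_{0j}\,\partial_i(\rho^{-1})$ is smooth because $\bar g_{0j}=O(\rho^2)$. Your approach instead absorbs the $\rho^{-1}$ into the normal itself by passing to a \emph{smooth} non-unit normal $N^\perp$ and the Lie-derivative formula, so the cancellation appears as $|N^\perp|_{\bar g}\sim\rho$ against $\mathcal L_{N^\perp}\bar g = O(\rho)$. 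Both proofs rest on the same structural fact about admissible metrics (that $\bar g$ is $\theta$-independent at $\tS$); yours packages it more conceptually and avoids writing out Christoffel symbols, while the paper's version is slightly more explicit about which components carry the cancellation. One small point of hygiene: having said ``extend $N^\perp$ arbitrarily'', you then use $(N^\perp)^k=\delta^k_\theta+O(\rho^2)$ in the coordinate Lie-derivative formula, which involves $\partial_\theta(N^\perp)^k$ and hence the extension. Since $(\mathcal L_{N^\perp}\bar g)|_{T\tQ\otimes T\tQ}$ is independent of the extension, simply choose one with $(N^\perp)^k-\delta^k_\theta=O(\rho^2)$ in a full neighborhood (or phrase the vanishing claim only for the restriction to $T\tQ$ along $\tQ\cap\tS$); the rest of the argument is unaffected.
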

\begin{proof}
  Notice once again that by (\ref{g}), $\bar{g}$ extends smoothly to
  $\tU$ as a smooth tensor field (although not as a metric).
  Let $\bar{\nu}$ be the inward unit normal vector field on $\tQ \setminus \tS$ with respect to $\bar{g}$. By Lemma
  \ref{normlem}, $\bar{\nu} = -\frac{1}{\rho}\frac{\partial}{\partial \theta} + O_{\bar{g}}(\rho)$. We next wish to
  consider $\overline{\Gamma}_{i0j} = \frac{1}{2}(\partial_i\bar{g}_{0j} + \partial_{\theta}\bar{g}_{ij} - \partial_j\bar{g}_{i0})$.
  Plainly this is smooth on $\tU$. But moreover, by (\ref{g}), we see that it is $O(\rho)$. (Remember that
  $k_{\rho}$ is independent of $\theta$). All other Christoffel symbols are likewise smooth.
  Now using Weingarten's equation, we have in coordinates
  \begin{align*}
    \overline{K}_{ij} &= -\bar{g}_{kj}\overline{\nabla}_i\bar{\nu}^k\\
    &= -\bar{g}_{kj}\partial_i\bar{\nu}^k - \bar{\nu}^l\overline{\Gamma}_{ilj}\\
    &= \bar{g}_{0j}\partial_i(\rho^{-1}) + (smooth).
  \end{align*}
  Since $\bar{g}_{0j} = O(\rho^2)$ for any $0 \leq j \leq n$, we conclude that $\overline{K}$ extends smoothly
  to $\tQ \cap \tS$.
\end{proof}

\begin{lemma}
  \label{curvlem}
  Let $g$ be an admissible metric on $(\tX,\tM,\tQ,\tS)$ and $R$ its curvature tensor. Then
  $R_{ijkl} + (g_{ik}g_{jl} - g_{il}g_{jk}) = O_g(\rho\sin\theta)$.
\end{lemma}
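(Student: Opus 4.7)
The strategy is to decompose $g = b^*g_+ + \mathcal{L}$ using Definition \ref{admis} and to handle each summand separately.

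For the first piece, I would invoke the classical asymptotic hyperbolicity estimate: for a smooth asymptotically hyperbolic metric $\tilde{g}$ with smooth defining function $r$ for the infinite boundary, one has
\[
\tilde{R}_{ijkl} + (\tilde{g}_{ik}\tilde{g}_{jl} - \tilde{g}_{il}\tilde{g}_{jk}) = O_{\tilde{g}}(r),
\]
which follows from the conformal-change formula applied to $\bar{g} = r^2\tilde{g}$ (cf.\ \cite{maz86}). To handle points near $S$, I would work in the ambient extension $(\bX,\bg_+)$ constructed in Section \ref{setup}, where the estimate applies without corner issues, and then restrict to $X$. Pulling back by the blowdown $b$, which is a diffeomorphism on $\tX \setminus \tS$, and using that $r = \rho\sin\theta$ in polar $g$-coordinates, yields the desired bound for $b^*g_+$ on $\tX \setminus \tS$. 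Since $b^*g_+$ is a smooth $0$-edge metric by \eqref{polformbetter}--\eqref{polformwasbetter}, both sides extend continuously as $0$-edge tensors, and the bound holds throughout a neighborhood of $\tS$.

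Next I would show that the perturbation $\mathcal{L} = (\rho\sin\theta)\ell$, with $\ell \in C^{\infty}(S^2({}^{0e}T^*\tX))$, contributes an error of only $O_g(\rho\sin\theta)$. Two observations drive this: first, $\mathcal{L}$ itself is $O_g(\rho\sin\theta)$ in the $0$-edge norm, so the quadratic term $g_{ik}g_{jl} - b^*g_{+,ik}b^*g_{+,jl}$ is likewise $O_g(\rho\sin\theta)$; second, applying any of the spanning $0$-edge vector fields $\sin\theta\,\partial_\theta$, $\rho\sin\theta\,\partial_{x^s}$, $\rho\sin\theta\,\partial_\rho$ to the weight $\rho\sin\theta$ produces $\rho\sin\theta$ times a bounded smooth function. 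Consequently the difference of Christoffel symbols
\[
\Delta\Gamma^{k}_{ij} = \tfrac{1}{2}g^{kl}(\nabla^{+}_i\mathcal{L}_{jl} + \nabla^{+}_j\mathcal{L}_{il} - \nabla^{+}_l\mathcal{L}_{ij}),
\]
where $\nabla^{+}$ is the Levi-Civita connection of $b^*g_+$, is a $(1,2)$-tensor of size $O_g(\rho\sin\theta)$ in the $0$-edge sense. Substituting into the schematic curvature-change identity
\[
R^g - R^{b^*g_+} = \nabla^{+}(\Delta\Gamma) + \Delta\Gamma \ast \Gamma[b^*g_+] + \Delta\Gamma \ast \Delta\Gamma,
\]
and using that $\Gamma[b^*g_+]$ is bounded in $0$-edge norm, every term on the right is $O_g(\rho\sin\theta)$. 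Combining with the estimate already obtained for $b^*g_+$ gives the claim.

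The main technical obstacle will be the careful $0$-edge bookkeeping: because the dual frame \eqref{dualframe} carries rescaling factors of $\sin\theta$ and $\rho\sin\theta$, one must verify that differentiating the coefficients appearing in \eqref{g} (in particular the off-diagonal entries and the $k_\rho$ block, for which the $\theta$-independence at $\rho = 0$ is essential) does not inadvertently spoil the $\rho\sin\theta$ decay, and that no spurious singular factor appears in $\Delta\Gamma$ at $\tS$. Once these orders are tracked consistently in the frame $\sin\theta\,\partial_\theta$, $\rho\sin\theta\,\partial_\rho$, $\rho\sin\theta\,\partial_{x^s}$, the result follows by a direct expansion.
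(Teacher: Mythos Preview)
Your approach is correct and takes a genuinely different route from the paper's. You decompose $g = b^*g_+ + \mathcal{L}$ and obtain the full $O_g(\rho\sin\theta)$ estimate for $b^*g_+$ in one stroke by pulling back Mazzeo's classical AH estimate from the extension $(\bX,\bg_+)$, where the compactified metric $r^2\bg_+$ is genuinely smooth and the constant in the estimate is uniform near $S$. You then handle $\mathcal{L}$ by perturbation; what makes this work is that the 0-edge vector fields are closed under Lie bracket, so by Koszul's formula the Levi-Civita connection of any smooth 0-edge metric maps smooth 0-edge sections to smooth 0-edge sections, and hence $\nabla^+\mathcal{L}$, $\Delta\Gamma$, and $\nabla^+(\Delta\Gamma)$ all inherit the $\rho\sin\theta$ decay. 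The paper instead works directly with $g$ and the \emph{degenerate} compactification $\bar g = (\rho\sin\theta)^2 g$ on $\tX$: it computes the Christoffel symbols and curvature of $\bar g$ explicitly in polar coordinates to establish $T = O_g(\rho)$, then invokes Mazzeo only away from $\tS$ to get $T = O_g(\sin\theta)$, and finally verifies that $T$ is a smooth section of $\otimes^4({}^{0e}T^*\tX)$ so that the two single-variable estimates combine to $O_g(\rho\sin\theta)$. Your route sidesteps the explicit Christoffel table and the two-step $\rho$/$\sin\theta$ argument, at the price of the abstract 0-edge regularity fact you flag as the main technical point; the paper's route is more hands-on and entirely self-contained in coordinates. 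One small correction to your schematic: the curvature-difference identity reads $R^g - R^{b^*g_+} = \nabla^+(\Delta\Gamma) + \Delta\Gamma \ast \Delta\Gamma$, with no standalone $\Gamma[b^*g_+]$ term, but this does not affect the conclusion.
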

\begin{proof}
  We begin by showing that $T_{ijkl} := R_{ijkl} + (g_{ik}g_{jl} - g_{il}g_{jk}) = O_g(\rho)$,
  using a modification of the proof of Proposition 1.10 of \cite{maz86}.
 
  Let $r = \rho\sin\theta$, so that $\bar{g} = r^2g$.
  Now the standard formula for conformal change of the Riemann tensor shows that
  \begin{equation*}
    R_{ijkl} = r^{-2}\overline{R}_{ijkl} + r^{-3}\left( r_{jk}\bar{g}_{il} + r_{il}\bar{g}_{jk} -
    r_{ik}\bar{g}_{jl} - r_{jl}\bar{g}_{ik}\right) - |\nabla r|_{\bar{g}}^2\left( g_{il}g_{jk} -
    g_{ik}g_{jl}\right),
  \end{equation*}
  where $r_{jk}$ represents the Hessian of $r$ taken with respect to $\bar{g}$.
  Thus, the expression we are interested in takes the form
  \begin{multline}
    \label{Rinterest}
    R_{ijkl} + (g_{ik}g_{jl} - g_{il}g_{jk}) = r^{-2}\overline{R}_{ijkl} + r^{-3}(r_{jk}\bar{g}_{il} + r_{il}\bar{g}_{jk}
    - r_{ik}\bar{g}_{jl} - r_{jl}\bar{g}_{ik})\\
    + (g_{il}g_{jk} - g_{ik}g_{jl})(1 - |\nabla r|_{\bar{g}}^2).
  \end{multline}
  Now using the fact that $r = \rho \sin \theta$ and (\ref{ginv}), it follows immediately that
  $|\nabla r|_{\bar{g}}^2 = 1 + O(\rho)$. Thus, the last term is $O_g(\rho)$.
  
   We next turn to computing $\overline{R}_{ijkl}$. It will be convenient to use the formula
  \begin{equation*}
    \overline{R}_{ijkl} = \frac{1}{2}\left( \partial_{jl}^2\bar{g}_{ik} + \partial_{ik}^2\bar{g}_{jl} - 
    \partial_{il}^2\bar{g}_{jk}
    -\partial_{jk}^2\bar{g}_{il}\right) + \bar{g}^{pq}\left( \overline{\Gamma}_{jlp}\overline{\Gamma}_{ikq} - \overline{\Gamma}_{jkp}
    \overline{\Gamma}_{ilq}\right),
  \end{equation*}
  where $\overline{\Gamma}_{ijk} = \frac{1}{2}(\partial_i\bar{g}_{jk} + \partial_j\bar{g}_{ij} - \partial_k\bar{g}_{ij})$. 
  We thus compute
  these Christoffel symbols. Using our polar $g$ coordinates and (\ref{g}), we find that
  \begin{equation*}
    \begin{array}{lll}
      \overline{\Gamma}_{000} = O(\rho^3) & \overline{\Gamma}_{00u} = O(\rho^3) & \overline{\Gamma}_{00n} = -\rho + O(\rho^2)\\
      \overline{\Gamma}_{0s0} = O(\rho^2) & \overline{\Gamma}_{0su} = O(\rho) & \overline{\Gamma}_{0sn} = O(\rho)\\
      \overline{\Gamma}_{0n0} = \rho + O(\rho^2) & \overline{\Gamma}_{0nu} = O(\rho) & \overline{\Gamma}_{0nn} = O(\rho)\\
      \overline{\Gamma}_{st0} = O(\rho) & \overline{\Gamma}_{stu} = O(1) & \overline{\Gamma}_{stn} = O(1)\\
      \overline{\Gamma}_{sn0} = O(\rho) & \overline{\Gamma}_{snu} = O(1) & \overline{\Gamma}_{snn} = O(\rho)\\
      \overline{\Gamma}_{nn0} = O(\rho) & \overline{\Gamma}_{nnu} = O(1) & \overline{\Gamma}_{nnn} = O(1).
    \end{array}
  \end{equation*}
  Now using these computations, (\ref{g}), and (\ref{ginv}), it follows straightforwardly that
  \begin{align*}
    \overline{R}_{0\mu\nu0} &= O(\rho)\\
    \overline{R}_{i\mu\nu\sigma} &= O(1),
  \end{align*}
  where $1 \leq \mu \leq n$ and $0 \leq i \leq n$. It follows, since $\sin(\theta)\frac{\partial}{\partial \theta}$
  and $\rho\sin(\theta)\frac{\partial}{\partial x^{\mu}}$ are a basis of approximately $g$-unit vector fields, that
  $r^{-2}\overline{R} = O_g(\rho)$.

  Finally, we compute that
  \begin{equation*}
    r_{ij} = 2\rho_{(i}\theta_{j)}\cos(\theta) - \sin(\theta)\bar{g}^{nk}\overline{\Gamma}_{ijk} -
    \rho\cos(\theta)\bar{g}^{0k}\overline{\Gamma}_{ijk},
  \end{equation*}
  from which it follows that
  $r_{\mu\nu} = O(1)$, that $r_{0\mu} = O(\rho)$, and that $r_{00} = O(\rho^2)$.
  Thus, the second term of (\ref{Rinterest}) is also $O_g(\rho)$, which yields the claim that $T = O_g(\rho)$.

  Now, near neighborhoods in $\tM$ away from $\tM \cap \tS$, it follows from the usual curvature result
  for asymptotically hyperbolic spaces, given in Proposition 1.10 of \cite{maz86}, that
  $T = O_g(\sin(\theta))$. Thus, the result will follow if we can show that $T$ is smooth as a section
  of the bundle $\otimes^4({}^{0e}T^*\tX)$. But this follows from what we have already done, and in
  particular from (\ref{Rinterest}). First, $r^{-2}\overline{R}_{ijkl}$ is smooth as a section of the bundle,
  since no more than two of the indices can be $0$, and a smooth frame for ${}^{0e}T^*\tX$
  is given by (\ref{dualframe}).
  But the second term similarly is smooth, for we have just seen that $r_{jk} = O(\rho)$ whenever
  either index is $0$. Thus, $T$ is a smooth section and is $O_g(\rho)$ and $O_g(\sin(\theta))$. The result follows.
\end{proof}

We can now give the alternate proof of the local diffeomorphism property.
\begin{proposition}
  \label{difflem}
  Let $(\tX,\tM,\tQ,\tS)$ be the blowup of the cornered space $(X,M,Q)$, and let $g$ be an admissible metric on $\tX$.
  There exists $\rho_0 > 0$ such that the map $\exp:N_+\tQ_{\rho_0} \to \tX$ is a local diffeomorphism on the normal bundle $N_+\tQ_{\rho_0}$.
  Moreover, there exists some $c > 0$ such that, if $\xi:(-\varepsilon,\varepsilon) \to \tQ_{\rho_0}$ is a smooth curve and
  $\Gamma:[0,\infty)\times(-\varepsilon,\varepsilon) \to \tX$ is given by $\Gamma_t(s) := \Gamma(t,s)
  = \exp(t\nu_{\xi(s)})$, then for all $t \geq 0$ and $s \in (-\varepsilon,\varepsilon)$,
  we have $|\Gamma_t'(s)|_g \geq c|\xi'(s)|_g$.
\end{proposition}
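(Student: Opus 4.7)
The plan is to apply Proposition \ref{explem} to $(\tX, g)$ with hypersurface $\tQ_{\rho_0}$ (for small $\rho_0$) and $W = N_+\tQ_{\rho_0}$. This reduces the claim to two uniform estimates: an upper bound of $-\beta$ on the sectional curvatures over $\exp(W)$, and a bound $|g^{-1}K| \leq \kappa$ on $\tQ_{\rho_0}$, with $\kappa < \sqrt{\beta}$. Granted these, the quantitative estimate $|\Gamma_t'(s)|_g \geq c|\xi'(s)|_g$ with $c = \sqrt{\tfrac{1}{2}(1 - \kappa^2/\beta)}$ is exactly what Proposition \ref{explem} delivers.

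The curvature bound follows at once from Lemma \ref{curvlem}, which gives sectional curvatures of the form $-1 + O(\rho\sin\theta)$. By Proposition \ref{geodprop}, geodesics leaving $\tQ_{\rho_0}$ normally satisfy $\rho < C\rho_0$ and $\sin\theta \leq A_2$ for all $t \geq 0$, so $\rho\sin\theta \leq A_2 C\rho_0$ throughout $\exp(W)$. For $\rho_0$ small the sectional curvatures can therefore be made $\leq -\beta$ for any preassigned $\beta < 1$.

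For the shape-operator bound, I use Lemma \ref{ksmoothlem} together with the standard conformal-change formula. Writing $\bar{g} = r^2 g$ with $r = \rho\sin\theta$, the $g$- and $\bar{g}$-second fundamental forms $K, \bar{K}$ of $\tQ$ are related by
\begin{equation*}
g^{-1}K \;=\; r\,\bar{g}^{-1}\bar{K} \;-\; r^{-1}\nu(r)\,\id.
\end{equation*}
By (\ref{g}), the kernel of $\bar{g}$ at $\tS$ is spanned by $\partial_\theta$, which is transverse to $T\tQ$, so $\bar{g}|_{T\tQ}$ is nondegenerate up to $\tQ \cap \tS$; together with Lemma \ref{ksmoothlem} this bounds $\bar{g}^{-1}\bar{K}$, so $r\,\bar{g}^{-1}\bar{K} = O(\rho)$ on $\tQ_{\rho_0}$. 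Next, Lemma \ref{normlem} gives $\nu = -\sin\theta\,\partial_\theta + O_g(\rho)$, whence $\nu(r) = -r\cos\theta + O(\rho r)$ and $r^{-1}\nu(r) = -\cos\theta + O(\rho)$. Therefore
\begin{equation*}
g^{-1}K = \cos(\theta)\,\id + O(\rho)
\end{equation*}
on $\tQ_{\rho_0}$. Since $\theta|_{\tQ \cap \tS} = \Theta$ is bounded away from $\{0, \pi\}$ by compactness of $\tQ \cap \tS$, $\sup|\cos\Theta|$ is strictly less than $1$, and for $\rho_0$ small we obtain $|g^{-1}K| \leq \kappa$ for some $\kappa < 1$.

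A final shrinking of $\rho_0$ to ensure $\beta > \kappa^2$ then completes the proof via Proposition \ref{explem}. I expect the main technical hurdle to be the conformal-change computation and the verification that $\bar{g}|_{T\tQ}$ remains nondegenerate at $\tQ \cap \tS$; the rest is continuity together with compactness of $\tS$.
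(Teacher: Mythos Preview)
Your proposal is correct and follows essentially the same route as the paper: both arguments reduce to Proposition \ref{explem} by using Lemma \ref{curvlem} together with Proposition \ref{geodprop} for the curvature bound, and the conformal-change formula together with Lemmas \ref{ksmoothlem} and \ref{normlem} for the shape-operator bound, then invoke compactness of $\tQ \cap \tS$ to get $|\cos\Theta|$ strictly below $1$. The only cosmetic differences are that you phrase the conformal comparison as an identity of shape operators $g^{-1}K = r\,\bar{g}^{-1}\bar{K} - r^{-1}\nu(r)\,\id$ (and explicitly note that $\bar{g}|_{T\tQ}$ is nondegenerate at $\tQ\cap\tS$, which is needed to make sense of $\bar{g}^{-1}\bar{K}$ there), whereas the paper works with $g$-unit vectors and the bilinear form $K(Y,Z) = g(Y,Z)\,dr(\bar{\nu}) + r\,\overline{K}(\bar{Y},\bar{Z})$; these are equivalent formulations of the same computation.
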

\begin{proof}
  It suffices to prove the second claim. Let $0 < \kappa < 1$ be such that $|\cos\theta| < \kappa$ on $\tQ \cap \tS$, which exists
  by compactness. Also let $\kappa < \beta < 1$.

  By Lemma \ref{curvlem}, there is some $\rho_{\beta}$ such that the sectional curvatures of $g$ are strictly less than
  $-\beta$ for all $x \in \mathring{\tX}_{\rho_{\beta}}$. By Proposition \ref{geodprop}, we can choose $\rho_0 > 0$ such that, for $q \in \tQ_{\rho_0}$,
  $\gamma_q$ remains in $\tX_{\rho_{\beta}}$.

  We begin by studying the eigenvalues of the second fundamental form of $\tQ$, which we denote by $K(Y,Z) =
  \langle\nabla_YZ,\nu\rangle_g$ (and correspondingly for $\overline{K}$ with respect to $\bar{g}$), and to do this we
  first compute the compactified second fundamental form $\overline{K}$. For $r \neq 0$, the unit $\bar{g}$-normal
  vector field to $\tQ_{\rho_0}$ is given by $\bar{\nu} = r^{-1}\nu$. (Recall that $r = \rho\sin\theta$.)
  A straightforward computation shows that for any vector fields $X, Y$ tangent to $\tQ$, we have
  \begin{equation*}
    \overline{\nabla}_XY = \nabla_XY + r^{-1}\left[ dr(X)Y + dr(Y)X - \langle X,Y\rangle_{\bar{g}}\grad_{\bar{g}}r \right].
  \end{equation*}
  For $q \in \tQ_{\rho_0}$ and $X, Y \in T_q\tQ$, it follows (taking extensions where necessary) that
  \begin{align*}
    \overline{K}(X,Y) &= -\langle\overline{\nabla}_X(r^{-1}\nu),Y\rangle_{\bar{g}}\\
    &= -r^{-1}\langle\nabla_X\nu + dr(X)\bar{\nu} + dr(\bar{\nu})X - \langle X,\bar{\nu}\rangle\grad_{\bar{g}}r - dr(X)\bar{\nu},Y\rangle_{\bar{g}}\\
    &= -r^{-1}(r^2K(X,Y) - \bar{g}(X,Y)dr(\bar{\nu})).
  \end{align*}
  Now let $Y, Z \in TQ$ be $g$-unit vectors over the same point, and let $\overline{Y} = r^{-1}Y$ and
  $\overline{Z} = r^{-1}Z$ be the parallel $\bar{g}$-unit vectors. It follows that
  \begin{equation*}
    K(Y,Z) = g(Y,Z)dr(\bar{\nu}) + r\overline{K}(\overline{Y},\overline{Z}).
  \end{equation*}
  Now $dr = \sin\theta d\rho + \rho\cos\theta d\theta$, and by Lemma \ref{normlem}, $\bar{\nu} = (-\frac{1}{\rho} + O(1)) \frac{\partial}{\partial \theta}
  + O(\rho)$. Thus, $|dr(\bar{\nu})| \to |\cos(\theta)| < \kappa$ as $\rho \to 0$. Since $\overline{K}$ is smooth on all
  of $\ctQ{\rho_0}$ by Lemma \ref{ksmoothlem}, 
  it follows that for $\rho$ small enough, the eigenvalues of the shape operator $g^{-1}K$ are 
  bounded in absolute value by $\kappa$:
  $|\lambda| < \kappa$. We restrict $\rho_0$ if necessary to ensure this condition.

  The result now follows straightforwardly by applying Proposition \ref{explem} with $Z = X$, with $Q = \tQ_{\rho_0}$, and with
  $W = N_+\tQ_{\rho_0}$.
\end{proof}

\section{Injectivity}\label{injectivity}

In the preceding sections, we have shown that there is a neighborhood $\ctQ{\rho_0}$ of $\tS$ in $\tX$ such that
$\exp:\cbundle{N_+\tQ_{\rho_0}} \to \tX$ is a local diffeomorphism. The remaining step to show that $\exp$ is a diffeomorphism onto its image
is to prove injectivity.

We will first work on the interior or non-compactified normal bundle $N_+\tQ$ near a fixed point of $\tQ \cap \tS$.
We will then make the result global along $\tQ \cap \tS$ using a compactness argument.

We prove injectivity on the interior using a homotopy lifting argument whose structure is that of Theorem 2 in \cite{her63}. We first prove a lifting result.
We let $\pi:N_+(\tQ\setminus \tS) \to (\tQ\setminus\tS)$ be the basepoint map.

\begin{proposition}
  \label{lift}
  Let $c, \rho_0$ be as in Proposition \ref{extendprop}. 
  Let $W \subset \tQ_{\frac{\rho_0}{2}}$ be open, $q \in W$, and let
  $x = \exp t_x \nu_q$ for some $t_x > 0$.
  Let $\alpha:[0,l] \to \mathring{\tX}$ be a smooth curve such that
  $\alpha(0) = x$ and such that
  \begin{equation}
    \label{nointercond}
    \alpha([0,l]) \cap \exp(\pi^{-1}(\partial W)) = \emptyset.
  \end{equation}
  Then there is a unique smooth curve $\sigma:[0,l] \to N_+W$ such that $\sigma(0) = t_x\nu_q$ and
  $\exp \sigma(s) = \alpha(s)$.
  Let $\xi = \pi\circ\sigma:[0,l] \to W$. Then $L_g(\xi) \leq c^{-1}L_g(\alpha)$.

  Moreover, if $\alpha:[0,1] \times [0,l] \to \mathring{\tX}$ is a homotopy of smooth curves such that,
  for each $\tau$, $\alpha(\tau,0) = x$ and the curve $s \mapsto \alpha(\tau,s)$ satisfies (\ref{nointercond}),
  then there is a unique lift of $\alpha$ to a homotopy of curves based at $t_x\nu_q$. That is,
  there is a unique smooth map $\sigma:[0,1] \times [0,l] \to N_+W$ such that $\sigma(\tau,0) = t_x\nu_q$
  for each $\tau$ 
  and such that $\exp\circ\sigma = \alpha$.
\end{proposition}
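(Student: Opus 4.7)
The plan is to establish path lifting by the standard open--closed continuation argument for the local diffeomorphism $\exp$ given in Proposition \ref{extendprop}, deduce the length bound from Jacobi-field $g$-orthogonality to the generating geodesic combined with the $\bar{g}$-estimate (\ref{ceq}), and finally upgrade to homotopy lifting by running the slicewise lift and checking joint smoothness locally.

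First I would construct the lift near $s = 0$. Since $\exp$ is a local diffeomorphism on $\cbundle{N_+\tQ_{\rho_0}}$, there are neighborhoods $U \ni t_x\nu_q$ and $V \ni x$ with $\exp|_U \colon U \to V$ a diffeomorphism, so for small $s$ we set $\sigma(s) := (\exp|_U)^{-1}(\alpha(s))$. Let $T := \sup\{\, s \in [0,l] : \text{a unique smooth lift of } \alpha|_{[0,s]} \text{ with } \sigma(0) = t_x \nu_q \text{ exists}\,\}$. Openness of this set follows from the same local-inverse construction centered at $\sigma(s)$, and uniqueness is standard: two lifts agreeing at a point agree on a clopen set by local injectivity. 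The crucial step is closedness: given $s_n \to T^-$, the sequence $\sigma(s_n)$ must accumulate inside $N_+W$. Two modes of escape have to be ruled out. The fiber-direction mode, $t(s_n) \to \infty$, corresponds in the compactification to $\tau(s_n) \to 1$; by Proposition \ref{extendprop} the extended $\exp$ maps the face $\{\tau = 1\}$ into $\tM$, so any accumulation there would force $\alpha(T) \in \tM$, contradicting $\alpha([0,l]) \subset \mathring{\tX}$. The base-direction mode would have $\pi(\sigma(s_n))$ accumulating at $\partial W$; passing to a subsequence, continuity of $\exp$ would put $\alpha(T) \in \exp(\pi^{-1}(\partial W))$, contradicting (\ref{nointercond}). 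Thus $\sigma(s_n)$ lies in a compact subset of $N_+W$; extracting a convergent subsequence $\sigma(s_{n_k}) \to \sigma_\infty$ with $\exp\sigma_\infty = \alpha(T)$ and using the local inverse of $\exp$ near $\sigma_\infty$ together with continuity of $\sigma$ on $[0,T)$, one sees that $\sigma(s)$ eventually enters that inverse neighborhood and the full sequence converges to $\sigma_\infty$. Setting $\sigma(T) := \sigma_\infty$ and invoking openness once more yields $T = l$.

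For the length estimate, write $\sigma(s) = (t(s), \xi(s))$ and decompose
\[ \dot\alpha(s) \;=\; \dot t(s)\,\gamma'_{\xi(s)}(t(s)) \;+\; d\exp_{\sigma(s)}(\dot\xi(s)). \]
The second summand is a Jacobi field along $\gamma_{\xi(s)}$ whose initial value and initial derivative are both $g$-orthogonal to $\nu_{\xi(s)}$ --- the initial value because $\dot\xi(s) \in T\tQ \perp \nu$, and the initial derivative because $\nu$ has constant unit $g$-length along $\xi$ --- so this Jacobi field is $g$-normal to $\gamma'$ for all $t$. Thus $|\dot\alpha|_g^2 = |\dot t|^2 + |d\exp(\dot\xi)|_g^2$ and in particular $|\dot\alpha|_g \geq |d\exp(\dot\xi)|_g$. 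Converting the $\bar{g}$-bound (\ref{ceq}) to $g$ introduces a factor $r(\xi(s))/r(\exp\sigma(s))$, which by Proposition \ref{geodprop}\ref{rhocond}--\ref{thetacond} is bounded below by a positive constant (in fact grows like $e^{t(s)}$); absorbing this gives $|\dot\alpha|_g \geq c\,|\dot\xi|_g$, and integrating yields $L_g(\xi) \leq c^{-1} L_g(\alpha)$. For the homotopy statement, apply the path lift to each $s \mapsto \alpha(\tau, s)$; uniqueness forces $\sigma = (\exp|_U)^{-1}\circ\alpha$ on a full two-dimensional neighborhood of any $(\tau_0, s_0)$, where $U$ is a local-diffeomorphism neighborhood of $\sigma(\tau_0, s_0)$, so $\sigma$ is smooth jointly in $(\tau, s)$.

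The main obstacle is the closedness step, specifically excluding the fiber escape: a priori the lift could have $t(s_n) \to \infty$ even when $\alpha(s_n)$ stays in a compact part of $\mathring{\tX}$, and this is ruled out only because $\exp$ extends smoothly and as a local diffeomorphism across $\{\tau = 1\}$ into $\tM$, as furnished by Proposition \ref{extendprop}. Everything else in the argument is standard covering-space machinery, and the length estimate is essentially a bookkeeping exercise once Jacobi-field normality is noted.
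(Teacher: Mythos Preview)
Your approach is correct and differs from the paper's chiefly in how you establish closedness of the lifting interval. The paper does not argue by compactness on the compactified bundle; instead it first proves the orthogonal decomposition $|\dot\alpha|_g^2 = \dot t^2 + |d\exp(\dot\xi)|_g^2 \geq \dot t^2 + c^2|\dot\xi|_g^2$ (invoking the $g$-estimate of Proposition~\ref{difflem}, not (\ref{ceq})) and uses it to bound both $\dot t$ and $|\dot\xi|_g$ directly, so that $\sigma$ extends continuously to the supremum; the finite $g$-length of $\xi$ then rules out the limit landing in $\tS$, and (\ref{nointercond}) rules out $\partial W$. Your compactness route is cleaner in that it decouples the lifting from the length bound, but two points need tightening. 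First, you must also exclude the mode where $\xi(s_n)$ accumulates in $\tQ\cap\tS$ (the closure of $W$ in $\tQ$ can meet $\tS$, e.g.\ when $W=\tQ_{\rho_0/2}$); this is handled by the same mechanism, since Proposition~\ref{extendprop} sends $[0,1]\times(\tQ\cap\tS)$ into $\tS\not\ni\alpha(T)$. Second, your conversion of the $\bar g$-estimate (\ref{ceq}) to $g$ picks up the factor $r(\xi(s))/r(\exp\sigma(s))$, which is indeed bounded below by a positive constant but not by $1$, so the resulting constant is not literally the $c$ of (\ref{ceq}); the paper's statement is itself slightly imprecise here, since its proof cites the $g$-bound of Proposition~\ref{difflem}. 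Your homotopy-lifting sketch has the right idea, but the claim that uniqueness forces $\sigma=(\exp|_U)^{-1}\circ\alpha$ on a two-dimensional neighborhood presupposes joint continuity of $\sigma$, which is exactly what is at stake; the paper makes this bootstrap explicit by propagating smoothness along strips.
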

\begin{remark}
  We are especially interested in the special case where $W = \tQ_{\frac{\rho_0}{2}}$ itself.
\end{remark}

\begin{proof}
  Let $x, \alpha$ be as in the statement. 
  By Proposition \ref{extendprop}, $\exp:N_+\tQ_{\frac{\rho_0}{2}} \to X$ is a local diffeomorphism. 
  Hence, at least some opening interval
  of $\alpha$ may be lifted uniquely to a smooth curve $\sigma$ beginning at $t_x\nu_q$ -- that is,
  there is some $a > 0$ and a unique smooth $\sigma:[0,a] \to N_+\tQ_{\frac{\rho_0}{2}}$ such that $\sigma(0) = t_x\nu_q$ and
  $\exp\circ\sigma = \alpha|_{[0,a]}$.
  Suppose we cannot lift the entire curve, and let
  $b$ be the supremum of $a > 0$ such that we can uniquely lift $\alpha|_{[0,a]}$ in the preceding
  sense. Then there is a unique lift $\sigma$ of $\alpha|_{[0,b)}$. By continuity and (\ref{nointercond}),
  $\sigma$ takes values in $N_+(\overline{W}\setminus\tS)$.

  As in the statement, define $\xi:[0,b) \to W$ by
  $\xi = \pi\circ\sigma$. By the canonical identification $N_+W\approx [0,\infty) \times W $, we can
  write $\sigma(s) = (t(s),\xi(s))$. Now for each $s < b$, $\exp \sigma(s) = \alpha(s)$; it follows that, for $s < b$, 
  $(d\exp)(\sigma'(s)) = \alpha'(s)$; and thus that $|(d\exp)(\sigma'(s))|^2_g = |\alpha'(s)|^2_g$. Under
  the identification $T_{(t,q)}N_+W \approx \mathbb{R}\frac{\partial}{\partial t} 
  \oplus T_qW$, we can write $\sigma'(s) = \dot{t}(s)\frac{\partial}{\partial t} + \xi'(s)$.
  Let $A = \sup_{0 \leq s \leq b}|\alpha'(s)|_g^2$. Then since
  $(d\exp)\left(\frac{\partial}{\partial t}\right) \perp (d\exp)(\xi'(s))$,
  we have
  \begin{align*}
    A &\geq |\alpha'(s)|_g^2 = \left|\dot{t}(s)(d\exp)\left( \frac{\partial}{\partial t} \right) + 
    (d\exp)_{t\nu_{\xi}}(\xi'(s))\right|_g^2\\
    &= \dot{t}(s)^2 + |(d\exp)_{t\nu_{\xi}}(\xi'(s))|_g^2\\
    &\geq \dot{t}(s)^2 + c^2|\xi'(s)|_g^2 \text{ (by Proposition \ref{difflem})}\numberthis\label{liftbound}.
  \end{align*}
  Thus, both $|\dot{t}(s)|$ and $|\xi'(s)|_g$ are bounded. It follows that
  $\lim_{s \to b}\xi(s)$ exists in $\overline{W}$, so $\xi$ may be extended to exist continuously on
  $[0,b]$ (although \emph{a priori} $\xi(b)$ may not lie in $W$). Because $\xi:[0,b] \to \overline{W}$ has
  finite length by (\ref{liftbound}), $\xi(b) \notin \tS$.
  
  Now also by (\ref{liftbound}), $\lim_{s \to b}t(s)$ exists;
  so $\lim_{s \to b}\sigma(s)$ exists, and $\sigma$ may be continuously extended to $[0,b]$, possibly taking
  values in $N_+\overline{W} \supset N_+W$. However, by continuity we have $\exp\sigma(b) = \alpha(b)$. Because
  (\ref{nointercond}) holds, $\xi(s) \notin \partial W$ for any $0 \leq s \leq b$. Therefore,
  $\xi([0,b]) \subset W$, and hence $\sigma([0,b]) \subset N_+W$. Now by Proposition \ref{difflem} and
  because $W \subseteq \tQ_{\frac{\rho_0}{2}}$, $\exp$ is a local
  diffeomorphism on some ball about $\sigma(b)$, so it follows that $\sigma$ can be smoothly and uniquely extended
  at least some distance beyond $b$. This is a contradiction, so $\sigma$ can be extended smoothly and uniquely
  to all of $[0,l]$.

  We now turn to homotopy lifting. Suppose that $x$ is as above, and that $\alpha:[0,1] \times [0,l] \to 
  \mathring{\tX}$ is a smooth
  map such that $\alpha(\tau,0) = x$ for all $\tau$ and such that, for fixed $\tau$, the curve $s \mapsto \alpha(\tau,s) =:
  \alpha_{\tau}(s)$ meets condition (\ref{nointercond}). 
  We wish to show that there is a lift $\sigma:[0,1] \times [0,l] \to N_+W$
  such that $\exp \sigma = \alpha$. In the following, we will also use the notation $\alpha^s(\tau) = \alpha(\tau,s)$.

  Let $\sigma_0:[0,l] \to N_+W$ be a lift, as above, of $\alpha_0$ beginning at
  $t_x\nu_q$. For each $s \in [0,l]$, let
  $\tau \mapsto \sigma(\tau,s)$ be the lift, starting at $\sigma_0(s)$, of the map
  $\tau \mapsto \alpha(\tau,s)$.
  Then $\sigma:[0,1] \times [0,l] \to N_+W$ and $\alpha(\tau,s) = \exp\circ\sigma(\tau,s)$.
  It is plain that $\sigma$ is smooth in $\tau$. We wish to show that it is smooth in $s$. 
  
  Let $s_0 \in (0,l]$. We will construct a small strip in
  $[0,1] \times [0,l]$, containing $[0,1] \times \left\{ s_0 \right\}$, such that
  $\sigma$ is smooth on the strip. (The case $s_0 = 0$ is easy because $\exp$ is a local diffeomorphism and
  $\alpha_\tau(0) = x$ for all $\tau$). Let $U \subset N_+W$ be a coordinate ball, containing $\sigma(0,s_0)$, such
  that $\exp|_{U}$ is a diffeomorphism. Then $\alpha^{-1}(\exp(U)) \subseteq [0,1] \times [0,l]$ is an open
  neighborhood of $(0,s_0)$. By continuity of $\sigma_0$, there is some $\varepsilon > 0$ such that
  $\sigma_0([s_0 - \varepsilon,s_0 + \varepsilon]) \subset U$. Now if $s \in [s_0 - \varepsilon,s_0 + \varepsilon]$
  and $a > 0$ is small enough that $[0,a] \times \left\{ s \right\} \subset \alpha^{-1}(\exp(U))$,
  then for $0 \leq \tau \leq a$, the map $\tau \mapsto (\exp|_U)^{-1}(\alpha(\tau,s))$ is a smooth lift
  of $\tau \mapsto \alpha(\tau,s)$ beginning at $\sigma_0(s)$. It follows by uniqueness of lifting that
  it is equal to $\tau \mapsto \sigma(\tau,s)$. Thus, on some neighborhood of $\left\{ 0 \right\} \times
  [s_0 - \varepsilon,s_0 + \varepsilon]$, we have $\sigma = (\exp|_U)^{-1}\circ\alpha$, so in particular,
  $\sigma$ is smooth on a neighborhood of $(0,s_0)$.
  
  Set
  \begin{equation*}
    b = \sup \left\{ d \geq 0: \sigma \text{ is smooth on a neighborhood of } [0,d] \times \left\{ s_0 \right\}.\right\}.
  \end{equation*}
  The preceding discussion shows that $b > 0$. Clearly $b \leq 1$,
  We claim $b = 1$. Suppose not, by way of contradiction. Once more, let $U$ be an open set
  containing $\sigma(b,s_0)$ such that $\exp|_U$ is a diffeomorphism. Then again,
  $\alpha^{-1}(\exp(U)) \subseteq [0,1] \times [0,l]$ is an open neighborhood of $(b,s_0)$. Moreover,
  because $\sigma^{s_0}$ is smooth in $\tau$, we have $\sigma(a,s_0) \in U$ for all $a$ sufficiently near $b$.
  Let $a_1 < b$ be sufficiently near. Then $\sigma$ is smooth on some neighborhood of $[0,a_1] \times \left\{ s_0 \right\}$
  by definition of $b$. Hence by continuity, we conclude that $\sigma$ maps some neighborhood of $(a_1,s_0)$ into
  $U$. We can choose some $\varepsilon > 0$ such that $\sigma(\left\{ a_1 \right\} \times [s_0 - \varepsilon,
  s_0 + \varepsilon]) \subset U$ and so that $\sigma$ is smooth on a neighborhood of
  $[0,a_1] \times [s_0 - \varepsilon,s_0 + \varepsilon]$. Thus, for $s \in [s_0 - \varepsilon,s_0 + \varepsilon]$,
  $\sigma(a_1,s) = (\exp|_U)^{-1}\circ\alpha(a_1,s)$. Now, by shrinking $\varepsilon$ if need be,
  we can choose $a_2 > b$ such that $[a_1,a_2] \times [s_0 - \varepsilon,s_0 + \varepsilon] \subseteq
  \alpha^{-1}(\exp(U))$. Fix $s \in [s_0 - \varepsilon,s_0 + \varepsilon]$. The map
  $\tau \mapsto (\exp|_U)^{-1}(\alpha(\tau,s))$ (where $a_1 \leq \tau \leq a_2)$ is a smooth lift of the map
  $\tau \mapsto \alpha(\tau,s)$ beginning at $\sigma(a_1,s)$. Then by uniqueness of path lifts, we have
  $\sigma(\tau,s) = (\exp|_U)^{-1}(\alpha(\tau,s))$ on this rectangle. Thus, $\sigma$ is smooth
  on $[0,a_2] \times [s_0 - \varepsilon,s_0 + \varepsilon]$, which is a contradiction since $a_2 > b$. Thus $b = 1$.
  We conclude that $\sigma$ is smooth on all of $[0,1] \times [0,l]$.\
\end{proof}
\begin{remark}
  It follows from the proof that, if we set $W = \tQ_{\frac{\rho_0}{2}}$ and $R = \left\{ q \in \tQ:
  \rho(q) = \frac{\rho_0}{2}\right\}$, then the hypothesis (\ref{nointercond}) could be
  replaced by the condition $d_{\tQ}(q,R) > c^{-1}L_g(\alpha)$.
\end{remark}

This result in hand, we may prove interior injectivity.

\begin{proposition}
  \label{injprop}
  There exists $a > 0$ such that $\exp:N_+\tQ_a \to \tX$ is injective.
\end{proposition}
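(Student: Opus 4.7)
The plan is to carry out a homotopy-lifting argument in the spirit of Theorem 2 of \cite{her63}, working first near a fixed fiber of $\tS$ and then globalizing by compactness of $\tQ \cap \tS$. Let $c, \rho_0$ be as in Proposition \ref{extendprop}, set $W = \tQ_{\rho_0/2}$, and fix a point $s_0 \in \tQ \cap \tS$. By Proposition \ref{geodprop}(\ref{rhocond}), there is some $\varepsilon > 0$ such that $\exp(\pi^{-1}(\partial W)) \subset \{\rho > \varepsilon \rho_0/2\}$, while the $b$-fiber $F = b^{-1}(b(s_0))$ lies entirely in $\{\rho = 0\}$. I can therefore choose a simply connected open neighborhood $U_0 \subset \tX$ of $F$ disjoint from $\exp(\pi^{-1}(\partial W))$, a neighborhood $V_0 \subset \tQ \cap \tS$ of $F \cap \tQ$, and $a_0 > 0$ small enough that $\exp(N_+\tQ_{a_0} \cap \pi^{-1}(V_0)) \subset U_0$; the last inclusion is possible by Proposition \ref{geodprop}, since geodesics off $\tQ_{a_0}$ vary in $\rho$ and $\pi_S$ by at most $Ca_0$.

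Suppose for contradiction that $(t_1, q_1) \neq (t_2, q_2)$ lie in $N_+\tQ_{a_0} \cap \pi^{-1}(V_0)$ and satisfy $x := \exp(t_1\nu_{q_1}) = \exp(t_2\nu_{q_2})$. I form a loop $\alpha$ at $x$ by concatenating three pieces: the reverse of $\gamma_{q_1}|_{[0, t_1]}$, a short path $\beta$ in $\tQ_{a_0} \cap \pi^{-1}(V_0)$ from $q_1$ to $q_2$, and $\gamma_{q_2}|_{[0, t_2]}$. All three pieces lie in $U_0$. Since $U_0$ is simply connected, $\alpha$ extends to a null-homotopy $H : [0,1]^2 \to U_0$ with $H(\tau, 0) = H(\tau, 1) = x$ for all $\tau$, $H(0, \cdot) = \alpha$, and $H(1, \cdot) \equiv x$. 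Because $U_0 \cap \exp(\pi^{-1}(\partial W)) = \emptyset$, condition (\ref{nointercond}) is satisfied by every slice $s \mapsto H(\tau, s)$, so Proposition \ref{lift} produces a smooth lift $\widetilde{H} : [0,1]^2 \to N_+W$ with $\widetilde{H}(\tau, 0) \equiv t_1\nu_{q_1}$ and $\exp \circ \widetilde{H} = H$.

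By the construction of $\alpha$ together with uniqueness of path lifting, $\widetilde{H}(0, \cdot)$ ends at $t_2\nu_{q_2}$, whereas $\widetilde{H}(1, \cdot)$ lifts the constant loop and so is constant at $t_1\nu_{q_1}$, ending at $t_1\nu_{q_1}$. But $\tau \mapsto \widetilde{H}(\tau, 1)$ is a continuous map into $\exp^{-1}(x) \cap N_+W$, which is discrete because $\exp$ is a local diffeomorphism by Proposition \ref{extendprop}; hence this endpoint map is constant. This forces $t_1\nu_{q_1} = t_2\nu_{q_2}$, a contradiction.

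For globalization, I cover the compact set $\tQ \cap \tS$ by finitely many such neighborhoods $V_0^{(i)}$ and take $a$ to be the minimum of the corresponding $a_0^{(i)}$, shrinking further using Proposition \ref{geodprop}(\ref{Scond}) to ensure that for any $(t, q) \in N_+\tQ_a$ the image $\exp(t\nu_q)$ stays close enough to $\pi_S(q)$ that any two preimages of the same point automatically have basepoints in a common $V_0^{(i)}$. The principal difficulty is coordinating all these neighborhood choices: $U_0$ must be narrow enough to miss $\exp(\pi^{-1}(\partial W))$ (so that the lift exists), yet wide enough to contain every loop $\alpha$ of the above form together with a suitable null-homotopy. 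This balance is possible because the loops have diameter $O(a_0)$ by Propositions \ref{geodprop} and \ref{extendprop}, while the distance from $F$ to $\exp(\pi^{-1}(\partial W))$ is uniformly bounded below.
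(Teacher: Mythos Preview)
Your argument is essentially the paper's: both use the homotopy-lifting machinery of Proposition~\ref{lift} to show that two preimages of the same point would yield a nonconstant continuous path in the discrete fiber $\exp^{-1}(x)$, and both globalize by covering $\tQ\cap\tS$ with finitely many simply connected product neighborhoods. The one substantive difference is the choice of loop. The paper picks an arbitrary smooth path $\sigma$ in $N_+V$ from $u$ to $v$ and sets $\alpha=\exp\circ\sigma$; this has the advantage that the lift of $\alpha$ based at $u$ is \emph{tautologically} $\sigma$, so its endpoint $v$ is known for free. You instead build $\alpha$ explicitly as geodesic--$\tQ$-path--geodesic and recover the endpoint of the lift by tracing each piece. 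Both are fine, and yours is perhaps more concrete, but it introduces a small friction: your loop and its null-homotopy pass through $\tQ$ (and your $U_0$, being a neighborhood of a fiber of $\tS$, contains pieces of $\tM$ and $\tS$), whereas Proposition~\ref{lift} is stated for curves in $\mathring{\tX}$. You should either observe that the proof of Proposition~\ref{lift} goes through verbatim for curves in $\tX\setminus(\tM\cup\tS)$ (the only requirement is that $\exp$ be a local diffeomorphism there, which it is on all of $N_+W$ including the zero section), or take $U_0$ to be a product box and work in $U_0\cap(\tX\setminus(\tM\cup\tS))$, which is still simply connected. With that one-line fix your proof is complete and matches the paper's.
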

\begin{proof}
  Let $\rho_0$ be small enough
  that Propositions \ref{extendprop} and \ref{lift} hold on $\tQ_{\rho_0}$, then define $R \subset \tQ$ by
  $R = \left\{ q \in \tQ: \rho(q) = \frac{\rho_0}{2}\right\}$.   

  We first prove a result with a topological hypothesis.
  By Proposition \ref{geodprop}, there exists $\rho_1 < \frac{\rho_0}{2}$ such that
  if $q \in \tQ_{\rho_1}$, then $\gamma_q$ will not intersect $\exp(\pi^{-1}(R))$:
  for if $q^{\prime} \in R$, then by Proposition \ref{geodprop}(a), $\frac{\varepsilon \rho_0}{2}
  <  \rho(\gamma_{q'}(t))$ for all $t$, whereas  $\gamma_q$ can be made to remain arbitrarily close to $\tS$ by choosing
  $\rho_1$ sufficiently small. 

  We will show that if $V \subseteq \tQ_{\rho_1}$ is connected, and $\tA \subset \tX \setminus (\tM \cup \tS)$ is a 
  simply connected open set such that $\tA \cap \exp(\pi^{-1}(R)) = \emptyset$ and such
  that $\exp(N_+V) \subseteq \tA$, then $\exp$ is injective on $N_+V$. Here, once again, $\pi:N_+(\tQ \setminus \tS) \to \tQ \setminus \tS$
  is the basepoint map.

  Suppose, by way of contradiction, that $\exp$ is not injective on $N_+V$. Then there exists some $x \in \tA$,
  $u \neq v \in N_+V$ such that $\exp(u) = x = \exp(v)$. Let $\sigma:[0,1] \to N_+V$ be a smooth path in $N_+V$
  from $u$ to $v$, and let $\alpha = \exp\circ\sigma:[0,1] \to \tA$. Then $\alpha$ is a smooth loop
  segment at $x$; and since $\tA$ is simply connected, there is a smooth homotopy $\tilde{\alpha}:[0,1] \times
  [0,1] \to \tA$ such that $\tilde{\alpha}(0,s) = \alpha(s)$ and $\tilde{\alpha}(1,s) = x$. Now by construction,
  $\tilde{\alpha}(\tau,s)$ avoids $\exp(\pi^{-1}(R))$ for all $\tau, s$; so by Proposition \ref{lift} with $W = 
  \tQ_{\frac{\rho_0}{2}}$, there exists
  a lifted homotopy $\tilde{\sigma}:[0,1] \times [0,1] \to N_+\tQ_{\frac{\rho_0}{2}}$, 
  based at $u$, such that $\exp\circ\tilde{\sigma}
  = \tilde{\alpha}$. Thus, $\exp\circ\tilde{\sigma}(\tau,1) = x$ for all $\tau$. Moreover, $\tilde{\sigma}(0,1) =
  v$ and $\tilde{\sigma}(1,1) = u$ (since the lift of a constant path is constant). Thus, defining
  $\zeta:[0,1] \to N_+\tQ_{\frac{\rho_0}{2}}$ by $\zeta(\tau) = \tilde{\sigma}(\tau,1)$, the curve $\zeta$ must be a smooth path
  from $v$ to $u$. On the other hand, $\exp\circ\zeta(\tau) = x$ for all $\tau$. But as $\exp$ is a local
  diffeomorphism, $\exp^{-1}(\left\{ x \right\})$ is discrete. Thus, $\zeta$ is a non-constant smooth
  map from a connected space to a discrete space, which is a contradiction. Hence, $\exp$ is injective on $N_+V$, which establishes our claim.

  To allow a general topology, and in particular a full neighborhood of $\tS$, 
  first note that if $B \subseteq S$ is simply connected, then so is $b^{-1}(B)$. This is because $\tS \to S$ is a trivial fibration,
  since $S$ is the intersection of two globally defined hypersurfaces.
  For such $B$, and for $\kappa > 0$, we define
  \begin{equation*}
    \tA(B,\kappa) = \left\{ (\theta,p,\rho) \in \tX: p \in B \text{ and } 0 < \rho < \kappa \right\},
  \end{equation*}
  where we are using our polar identification. Notice that by taking $\kappa$ small enough, we may always assure that
  $\exp(\pi^{-1}(R)) \cap \tA(B,\kappa) = \emptyset$. Also,
  $\tA(B,\kappa)$ will be simply connected for $\kappa$ small.
  Now let $\delta > 0$ be less than the injectivity radius of $S$ with respect to $k_0 = \bar{g}|_{TS}$. For
  $p \in S$, let $B_{\delta}(p)$ denote the $\delta$-ball about $p$ with respect to $k_0$. Thus, for each $p \in S$,
  $B_{\delta}(p) \subseteq S$ is simply connected. Let $\kappa_0 > 0$ be small enough for the
  above conditions to hold for $B_{\delta}(p)$ at every $p \in S$. Set $\tA_p = \tA(B_{\delta}(p),\kappa_0)$.

  By Proposition \ref{geodprop}, there are $\varepsilon > 0, \kappa_1 > 0$ such that for each $p \in S$,
  $\exp(N_+(\tA(B_{\varepsilon}(p),\kappa_1) \cap \tQ)) \subseteq \tA_p$.
  Set $V_p = \tA(B_{\frac{\varepsilon}{4}}(p),\kappa_1) \cap \tQ$. Then $\left\{\underline{V_p} \right\}_{p \in S}$ covers 
  $\tS \cap \tQ \approx S$, and by compactness
  we may take a finite subcover $\left\{ \underline{V_{p_i}} \right\}_{i = 1}^N$. We will denote $V_i = V_{p_i}$.
  Now since there are finitely many $V_i$, we may apply Proposition \ref{geodprop}(b) to conclude that by shrinking $\kappa_1$ if necessary,
  we may ensure that $\exp(N_+V_i) \cap \exp(N_+V_j) = \emptyset$ whenever $\overline{V_i} \cap \overline{V_j} = \emptyset$.

  Set $V = \cup_{i}V_i$. We claim that $\exp$ is injective on $N_+V$. For suppose that there exist
  $u_1 = t_1\nu_{q_1}, u_2 = t_2\nu_{q_2} \in N_+V$
  such that $\exp(u_1) = \exp(u_2)$. By what has just been said, we must have
  $q_1 \in V_i$ and $q_2 \in V_j$ where $\overline{V_i} \cap \overline{V_j} \neq \emptyset$. 
  Let $p_i = \pi_S(q_i)$.
  Then by definition of $V_i, V_j$, we have $q_1, q_2 \in V_{\varepsilon} := \tA(B_{\varepsilon}(p_1),\kappa_1) \cap \tQ$; and
  since, by choice of $\varepsilon$, we have $\exp(V_{\varepsilon}) \subseteq \tA_{p_1}$, and by the simply connected case,
  we may conclude that $q_1 = q_2$. Thus taking $a$ small enough that $\tQ_a \subseteq V$ yields the theorem.
\end{proof}

This result may be extended to the compactified bundle, $\cbundle{N_+\tQ_{a}}$. 

\begin{proposition}
  \label{injprop2}
  There exists $a > 0$ such that $\exp:\cbundle{N_+\tQ_{a}} \to \tX$ is injective.
\end{proposition}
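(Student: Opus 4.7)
The plan is to argue by contradiction, combining interior injectivity (Proposition~\ref{injprop}), the local diffeomorphism property of the extended exponential (Proposition~\ref{extendprop}), and compactness of $\tQ \cap \tS$. First I would fix $a_0 > 0$ small enough that both of these propositions are in force on $\cbundle{N_+\tQ_{a_0}}$. Suppose, toward contradiction, that no $a \in (0, a_0]$ yields injectivity of $\exp$ on $\cbundle{N_+\tQ_a}$. Then for each integer $n \geq 1$ there exist distinct $u_n, v_n \in \cbundle{N_+\tQ_{a_0/n}}$ with $\exp(u_n) = \exp(v_n)$. Writing $u_n = (\tau_n^u, q_n^u)$ and $v_n = (\tau_n^v, q_n^v)$, the condition $q_n^u, q_n^v \in \ctQ{a_0/n}$ forces $\rho(q_n^u), \rho(q_n^v) < a_0/n \to 0$, so both base sequences approach the compact set $\tQ \cap \tS$.

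Next, using compactness of $\tQ \cap \tS$ and of $[0,1]$, I would extract a subsequence along which $u_n \to u$ and $v_n \to v$ with $u, v \in [0,1] \times (\tQ \cap \tS) \subset \cbundle{N_+\tQ_{a_0}}$. Continuity of the extended exponential then yields $\exp(u) = \exp(v)$.

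The main step is to verify that $\exp$ is injective on the entire corner face $[0,1] \times (\tQ \cap \tS)$, which will force $u = v$. This should follow directly from Proposition~\ref{extendprop}: for each $q \in \tQ \cap \tS$, $\exp$ sends $[0,1] \times \{q\}$ into the $b$-fiber of $\tS$ over $b(q)$, and the explicit formula (\ref{ssoln}), combined with the fact that $\theta \mapsto \csc\theta - \cot\theta$ is a diffeomorphism $(0,\pi) \to (0,\infty)$, makes $\tau \mapsto \Theta(\exp(\tau, q))$ strictly monotone and hence $\tau \mapsto \exp(\tau, q)$ injective along the fiber. For distinct base points $q_1, q_2 \in \tQ \cap \tS$, the restriction $b|_{\tQ \cap \tS} : \tQ \cap \tS \to S$ is a diffeomorphism, so $b(q_1) \neq b(q_2)$ and the corresponding $b$-fibers of $\tS$ are disjoint. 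Together these observations yield injectivity on the corner face, so $u = v$.

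With $u = v \in \cbundle{N_+\tQ_{a_0}}$ in hand, I would invoke Proposition~\ref{extendprop} a final time to obtain a neighborhood $W$ of this common point on which $\exp$ is a diffeomorphism, hence injective. For $n$ sufficiently large, $u_n$ and $v_n$ both lie in $W$, but $u_n \neq v_n$ while $\exp(u_n) = \exp(v_n)$, contradicting injectivity of $\exp|_W$. The hardest conceptual point is the injectivity of $\exp$ on the corner face $[0,1] \times (\tQ \cap \tS)$, but this reduces almost immediately to the formula (\ref{ssoln}) and to the bijectivity of $b|_{\tQ \cap \tS}$; no serious technical obstacle remains.
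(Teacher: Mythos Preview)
Your argument is correct. It differs from the paper's proof, though. The paper does not run a compactness argument: it shows directly that the very same $a$ coming from Proposition~\ref{injprop} already works on all of $\cbundle{N_+\tQ_a}$. Concretely, the paper partitions $\cbundle{N_+\tQ_a}$ into the interior $[0,1)\times\tQ_a$, the corner face $[0,1]\times(\tQ\cap\tS)$, and the infinity face $\{1\}\times\tQ_a$; it then observes that these three pieces have pairwise disjoint images (interior $\to \mathring{\tX}$, corner face $\to \tS$, infinity face $\to \tM\setminus\tS$). The only remaining case is two distinct points $(1,q_1),(1,q_2)$ on the infinity face with the same image; here the paper uses that $\exp$ of an open neighborhood $[0,1]\times B_1\ni(1,q_1)$ is open, so the continuous path $\tau\mapsto\exp(\tau,q_2)$ must enter it at some $\tau_2<1$, producing an interior collision and contradicting Proposition~\ref{injprop}.

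Your route trades this case analysis for a clean sequential compactness argument, leaning on injectivity on the corner face (which you handle exactly as the paper does, via (\ref{ssoln})) plus the local diffeomorphism property. The paper's version yields the slightly sharper conclusion that no further shrinking of $a$ is needed, and it isolates precisely where the new content lies (the $\tau=1$ face); your version is shorter and uses only the statement, not the location, of the possible failure.
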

\begin{proof}
  Let $a$ be as in Proposition \ref{injprop}. 
  As before, we label points in $\cbundle{N_+\tQ_{a}}$ by $(\tau,q) \in [0,1] \times \ctQ{a}
  \approx \cbundle{N_+\tQ_{a}}$. Throughout this proof, we regard $\exp$ as a function of
  $\tau \in [0,1]$ and $q \in \tQ_a$.
  We already know that $\exp$ is injective when restricted to the set $\left\{ 0 \leq \tau < 1, \rho(q) > 0 \right\}$. We may quickly
  extend this to $[0,1] \times S$: the exponential map takes $[0,1] \times S$ injectively to $\tS$ by the explicit solution (\ref{ssoln}), 
  and by Proposition \ref{geodprop} no other points are mapped to $\tS$.
  Moreover, since $\left\{ 1 \right\} \times \tQ_a$ is mapped to $\tM \setminus \tS$, its image is disjoint from the image of
  $([0,1) \times \tQ_a) \cup ([0,1] \times (\tS \cap \tQ))$. Therefore, we need only show that for $q_1 \neq q_2 \in \tQ_a$,
  $\exp(1,q_1) \neq \exp(1,q_2)$.

  Suppose, by way of contradiction, that $\exp(1,q_1) = \exp(1,q_2)$, with $q_1,q_2 \in \tQ_a$. Let $\tQ_a \supseteq 
  B_1 \ni q_1$ be open such that $q_2 \notin B_1$.
  Then $[0,1] \times B_1$ is open in $\cbundle{N_+\tQ_{a}}$, and so since $\exp$ is a local diffeomorphism,
  $\exp([0,1] \times B_1)$ is open in $\tX$. Let $\hat{\gamma}_{q_2}:[0,1] \to \tX$ be the rescaled geodesic given by
  $\hat{\gamma}_{q_2}(\tau) = \exp(\tau,q_2)$, which in particular is continuous. Thus, $\hat{\gamma}_{q_2}^{-1}(\exp([0,1] \times B_1))$
  is nonempty and is open in $[0,1]$, and so there is some $\tau_2 < 1$ such that $\hat{\gamma}_{q_2}(\tau_2) \in \exp([0,1] \times B_1)$. Since
  $\tau_2 < 1$, we must have $\hat{\gamma}_{q_2}(\tau) \in \mathring{\tX}$, and so there is some $q_3 \in B_1$, $\tau_3 \in [0,1)$ such that
  $\exp(\tau_3,q_3) = \exp(\tau_2,q_2)$. This contradicts interior injectivity and thus Proposition \ref{injprop}.
\end{proof}

\section{Proofs of Theorems}\label{proofs}

\begin{proof}[Proof of Theorem \ref{mainthm}]
  By Proposition \ref{extendprop}, there exists $\rho_0 > 0$ such that the exponential map
  $\exp:N_+\tQ_{\rho_0}\to\tX$ extends to a local diffeomorphism $\exp:\cbundle{N_+\tQ_{\rho_0}} \to \tX$.
  By Proposition \ref{injprop2}, it is injective for $\rho_0$ small enough.
  Taking $V = \ctQ{\rho_0}$ and $\tU = \exp(V)$, this yields the claim.
\end{proof}

\begin{proof}[Proof of Theorems \ref{normform} and \ref{normformconst}]
  By Theorem \ref{mainthm}, we may take $\rho_0 > 0$ such that $\exp:\cbundle{N_+\tQ_{\rho_0}} \to \tX$ is a diffeomorphism
  onto its image. Let $V \subset \ctQ{\rho_0}$ be a neighborhood of $\tQ \cap \tS$, and set $\tU = \exp(\cbundle{N_+(V \setminus \tS)})$. 
  Now under the canonical decomposition
  $N_+(V \setminus \tS) \approx \mathbb{R}_t \times (V \setminus \tS)$, we have
  \begin{equation}
    \label{geodesicform}
    (\exp|_{N_+(V \setminus \tS)})^*g = dt^2 + g_t,
  \end{equation}
  where $g_t$ is a one-parameter family of metrics
  on $V \setminus \tS$. Now let $u = 1 - \tau = e^{-t}$, so that
  $t = -\log u$. Then in these coordinates,
  \begin{equation}
    \label{pullbackmetric}
    (\exp|_{N_+(V \setminus \tS)})^*g = \frac{du^2 + u^2g_{-\log u}}{u^2}.
  \end{equation}
  Set $h_u = u^2g_{-\log u}$, so that this takes the form (\ref{normformeq}). 
  Now $u$ obviously extends to a global coordinate on $\cbundle{N_+(V \setminus \tS)}$, and we have
  already observed that $\exp$ is a diffeomorphism. Taking $\psi(u,q) = \exp(1-u,q)$ and $\tU = \psi(\cbundle{N_+(V \setminus \tS)})$,
  we plainly have $\psi(\left\{ u = 0 \right\}) = \tU \cap \tM$, $\psi(\left\{ u = 1 \right\}) = \tU \cap \tQ$, and
  $\psi|_{\{1\} \times V} = \id$. Uniqueness
  of $\psi$ follows from uniqueness of the form (\ref{geodesicform}), because all the intermediate steps are reversible.
  To prove Theorem \ref{normform}, it thus remains only to show that $h_u$ extends smoothly down to $u = 0$ 
  and that each $h_u$ is a conformally compact metric on $V$.

  Since $\psi$ is a diffeomorphism, we do our calculation on $\tU$ and regard $u$ as a function on $\tU$. For $0 \leq c \leq 1$ set $V_c =
  \left\{ x \in \tU: u(x) = c \right\} \approx V$. Let $\{x^s\}$ be a local coordinate system on $S$, and $(\theta,x^s,\rho)$ a polar identification. Then 
  locally, $u = u(\theta,x^s,\rho)$, so taking the exterior derivative of both sides of
  the equation $u = c$, we find that along $V_c$,
  \begin{equation*}
    0 = du = \frac{\partial u}{\partial \theta}d\theta + \frac{\partial u}{\partial x^s}dx^s + \frac{\partial u}{\partial \rho}d\rho.
  \end{equation*}
  Now $\frac{\partial u}{\partial \theta} \neq 0$, and so for some smooth functions $a, b_s$, we have
  \begin{equation}
    \label{dtheta}
    d\theta = ad\rho + b_sdx^s
  \end{equation}
  on $V_c$. Notice that $\frac{u}{\sin\theta}$ is smooth and nonvanishing on $\tU$. Then by (\ref{polform}),
  \begin{align*}
    u^2g &= \frac{u^2}{\sin^2(\theta)}\left( d\theta^2 + \frac{d\rho^2 + k_{\rho}}{\rho^2} \right) + (\rho u^2\sin(\theta)\ell)\\
    &=\frac{u^2}{\sin^2(\theta)}\frac{\rho^2d\theta^2 + d\rho^2 + k_{\rho}}{\rho^2} + (\rho u^2\sin(\theta)\ell).
  \end{align*}
  It is then clear by (\ref{dtheta}) that the restriction of $u^2g$ to any $V_c$ gives a smooth conformally compact metric
  on $V$ depending smoothly on $c$ all the way up to $c = 0$. Thus Theorem \ref{normform} is proved.

  Now suppose that $Q$ makes a constant angle $\theta_0$ with $M$, so that $\tQ \cap \tS$ is given by
  $\left\{ \theta = \theta_0 \right\}$. Let $\alpha = (\csc\theta_0 - \cot\theta_0)^{-1}$, and define
  a coordinate $\phi$ on $\cbundle{N_+(V\setminus\tS)}$ by
  $u = \alpha(\csc(\phi) - \cot(\phi))$. Notice that $\frac{du}{u} = \frac{d\phi}{\sin\phi}$.
  Thus, the metric (\ref{normformeq}) transforms to
  \begin{equation}
    \label{pullbackconst}
    \chi^*g = \frac{d\phi^2 + l_{\phi}}{\sin^2(\phi)},
  \end{equation}
  where we have defined $\chi:[0,\theta_0] \times V \to \tU$ by $\chi(\phi,q) = \psi(u(\phi),q)$ and
  $l_{\phi} = \frac{\sin^2\phi}{u^2}h_{u(\phi)}$.
  We view $\phi$ as a function on $\tU$ via the diffeomorphism $\chi$.
  Now by (\ref{ssoln}), we see that $\theta = \phi$ on $[0,1] \times (\tQ \cap \tS)$;
  or put differently, that $\theta = \phi + O(\rho)$. Because the level sets of $\phi$ and $u$ are the same,
  we get (\ref{dtheta}) again, and so by an identical calculation to the
  preceding, we find that
  \begin{equation*}
    \sin^2(\phi)g = \frac{\sin^2(\phi)}{\sin^2(\theta)}\frac{\rho^2d\theta^2 + d\rho^2 + k_{\rho}}{\rho^2} + (\rho\sin^2(\phi)\sin(\theta)\ell),
  \end{equation*}
  which is asymptotically hyperbolic as desired on each $V_c$ because $\frac{\sin\phi}{\sin\theta} = 1 + O(\rho)$. 

  We now prove the final claim.
  For this purpose, we define a change of coordinates $(\phi,y^{\mu})$ by
  setting $y^{\mu} = x^{\mu}$ on $\tQ$ and extending $y^{\mu}$ to be constant along orbits of the exponential map. These are coordinates by Theorem
  \ref{mainthm}. We wish to show that $(y^{n})^2l_{\phi}|_{\rho = 0}$ is constant in $\phi$.
  Now notice that it follows by (\ref{quadest}) that for $1 \leq \mu \leq n$,
  \begin{equation*}
    \frac{\partial}{\partial y^{\mu}} = \frac{\partial}{\partial x^{\mu}} + \frac{\partial \theta}{\partial y^{\mu}}\frac{\partial}{\partial \theta} + O(\rho).
  \end{equation*}
  Since $\frac{\partial}{\partial \theta} \in \ker \bar{g}$ at $\rho = 0$, we have, for $p \in \tU$, that
  \begin{equation*}
    \left.\bar{g}\left( \frac{\partial}{\partial y^{\mu}},\frac{\partial}{\partial y^{\nu}} \right)\right|_p = \left.\bar{g}\left( \frac{\partial}{\partial x^{\mu}},\frac{\partial}{\partial x^{\nu}} \right)\right|_p + 
    O(\rho(p)).
  \end{equation*}
  The right-hand side is constant in $\phi$ for $\rho = 0$ by (\ref{g}). But
  \begin{equation*}
    (y^n)^2l_{\phi}\left(\frac{\partial}{\partial y^{\mu}},\frac{\partial}{\partial y^{\nu}}  \right) =
    \bar{g}\left( \frac{\partial}{\partial y^{\mu}},\frac{\partial}{\partial y^{\nu}} \right)
  \end{equation*}
  at $\rho = 0$, and as we have seen, the right-hand side is constant in $\phi$ there. This yields the claim.

  Renaming $\phi$ by $\theta$, $\chi$ by $\psi$, and $l_{\phi}$ by $h_{\theta}$ yields the result.
\end{proof}

\begin{proof}[Proof of Corollaries \ref{normformcor} and \ref{normformconstcor}]
  We prove Corollary \ref{normformcor}. Let $V \subset \tQ$, $\tU \subset \tX$, and $\psi:[0,1] \times V \to \tU$ be as in Theorem \ref{normform}. Set
  $W = \psi(\left\{ 0 \right\} \times V)$, a neighborhood in $\tM$ of $\tM \cap \tS$.
  Let $\phi:V \to W$ be the diffeomorphism given by $\phi(q) = \psi(0,q)$. Define
  $\zeta(u,m) = \psi(u,\phi^{-1}(m))$.

  Uniqueness follows from uniqueness in Theorem \ref{normform}, the construction can be reversed to recover $\psi$ from $\zeta$.
\end{proof}

\begin{proof}[Proof of Corollary \ref{polarcor}]
Let $W, \tU, h_{\theta},$ and $\zeta$ be as in Corollary \ref{normformconstcor}.
Now $h_0$ is an asymptotically hyperbolic metric. So by the existence result for the standard geodesic
normal form (\cite{gl91}), there is a unique diffeomorphism $\varphi:S \times [0,\varepsilon)_{\rho} \to W$ such that
$\varphi^*h_0 = \rho^{-2}(d\rho^2 + k_{\rho})$, where $k_{\rho}$ is a smooth one-parameter family of metrics on $S$ such that
$k_0 = k$; and such that $\varphi|_{\left\{ 0 \right\} \times S} = id_S$.
The desired diffeomorphism is then given by $\chi = \zeta \circ (\id_{[0,\theta_0]} \times \varphi)$. The claimed properties follow immediately.
(Note that the $\rho$ in Corollary \ref{polarcor} is not the same as that appearing in polar $g$-coordinates).
\end{proof}

\bibliographystyle{alpha}
\bibliography{norm}
\end{document}